\theoremstyle{plain}%
\newtheorem{theorem}{Theorem}[section]
\newtheorem{proposition}[theorem]{Proposition}
\newtheorem{lemma}[theorem]{Lemma}
\newtheorem{corollary}[theorem]{Corollary}
\theoremstyle{remark}%
\newtheorem{remark}[theorem]{Remark}%
\theoremstyle{definition}%
\newtheorem{definition}{Definition}%
\newcommand{\E}{\mathbb{E}}
\begin{document}
\title[Controllability for semi-discrete stochastic parabolic operator]{Carleman estimate for semi-discrete stochastic parabolic  operators in arbitrary dimension and applications to controllability}
%%%%%%%%%%%%%
\author[R. Lecaros]{Rodrigo Lecaros}
\address[R. Lecaros]{Departamento de Matem\'atica, Universidad T\'ecnica Federico Santa Mar\'ia,  Santiago, Chile.}
\email{rodrigo.lecaros@usm.cl}

\author[A. A. P\'erez]{Ariel A. P\'erez}
\address[A. A. P\'erez]{(Corresponding Author) Departamento de Matem\'atica, Universidad del B\'io-B\'io, Concepci\'on, Chile.}
\email{aaperez@ubiobio.cl}

\author[M. F. Prado]{Manuel F. Prado}
\address[M. F. Prado]{Departamento de Matem\'atica, Universidad T\'ecnica Federico Santa Mar\'ia,  Santiago, Chile.}
\email{mprado@usm.cl}
%%%%%%%%
\subjclass[2020]{93B05, 93B07, 93C20, 65M06}
\keywords{semi-discrete stochastic parabolic equations, controllability, observability,
global Carleman estimate.}
%%%%%%%%%%%%%%%
\begin{abstract}
 This paper considers a semi-discrete forward stochastic parabolic operator with homogeneous Dirichlet conditions in arbitrary dimensions. We show the lack of null controllability  for a spatial semi-discretization of a null-controllable stochastic parabolic system from any initial datum. However, by proving a new Carleman estimate for its semi-discrete backward stochastic adjoint system, we achieve a relaxed observability inequality, which is applied to derivative $\phi$-null controllability by duality arguments.
\end{abstract}
\maketitle
%%%%%%%%%%%%%%%%%%%%
%-------------
\section{Introduction}\label{sec:Introduction}
%%%%%%%%%%%%%%%%%%%%%%%%%%%%%%%%%%%%%%%%%%%%%%%%%%%%%%%%%%%%%%%%%%%

Let $(\Omega, \mathcal{F},\{\mathcal{F}_{t}\}_{t\geq 0}, \mathds{P})$ be a complete filtered probability space on which a one-dimensional standard Brownian motion $\{B(t)\}_{t\geq0}$ is defined such that $\{\mathcal{F}_{t}\}_{t\geq 0}$ is the natural filtration generated by $B(\cdot)$, augmented by all $\mathds{P}$-null sets in $\mathcal{F}$. We denote by $\mathds{F}$ the progressive $\sigma$-field with respect to $\{\mathcal{F}_{t}\}_{t\geq 0}$. Let $H$ be a Banach space, and let $C([0,T];H)$ be the Banach space of all strongly continuous functionals $H$-defined in $[0,T]$. We denote by $L^2_{\mathcal{F}_t}(\Omega,H)$ the space of all $\mathcal{F}_t$-measurable random variables $\zeta$ such that $\E|\zeta|_{H}^2<\infty$; by $L^\infty_{\mathds{F}}(0,T;H)$ the Banach space consisting of all $H$-valued $\mathds{F}$-adapted essentially bounded processes, and by $L^2_{\mathds{F}}(0,T;H)$ the Banach space consisting of all $H$-valued $\mathds{F}$-adapted continuous processes $X$ such that $\E(|X|^2_{C([0,T];H)})<\infty$, with a canonical norm (similarly, one can define $L_{\mathds{F}}^2(\Omega;C^{m}([0,T];H))$ for any positive integer $m$).

Let $T>0$, $G\subset \mathds{R}^{n}\,(n\in \mathds{N})$ be a bounded domain with a $C^{\infty}$ boundary denoted by $\partial G$, and $G_0$ be a nonempty open subset of $G$. Denote by $\chi_{G_0}$ the characteristic function of $G_0$. This paper is devoted to a study of the null controllability for the spatial semi-discrete version of the following stochastic parabolic equation.
\begin{equation}\label{systemofcontrolcontinuos}
dy-\sum_{i=1}^{n}(\gamma_{i} y_{x_i})_{x_i}dt =\left(\sum_{i=1}^{n}a_{1i}y_{x_i}+a_2y+\chi_{G_0}u\right)dt+(a_3\,y+v)dB(t)
\end{equation}
in $ G\times (0,T)$, with $y=0\,\text{on}\, \partial G\times [0,T]$, $y(0)=y_0\,\text{in}\,G$,

and $\gamma_i>0$ with
\begin{equation*}
    \mbox{reg}(\gamma)\triangleq \text{ess}\sup_{\substack{x\in G \\ i=1,\cdots,n}}\left(\gamma_i+\frac{1}{\gamma_i}+\sum_{i=1}^{n}|\partial_{i}\gamma_i|^2\right)<\infty,
\end{equation*}
where $\gamma \triangleq (\gamma_1,\ldots,\gamma_{n})$, $a_{1m}\in L_{\mathds{F}}^{\infty}(0,T;W^{1,\infty}(G)), m=1,2,\ldots,n$, $
a_{3}\in L_{\mathds{F}}^{\infty}(0,T;L^{\infty}(G))$ and \break $a_2\in L_{\mathds{F}}^{\infty}(0,T;L^{n^\ast}(G))$ with 
\begin{equation}\label{conditionofn*}
    \begin{cases}
        n^{\ast}>2,& \mbox{if }\,n=2, \\
        n^{\ast}\geq n, &\mbox{if }\,n> 2.
    \end{cases}
\end{equation}
In the system \eqref{systemofcontrolcontinuos}, the initial state $y_0\in L^2_{\mathcal{F}_0}(\Omega; L^2(G)) $, $y$ is the state variable, and the control variable consists of the pair $(u,v)\in L_{\mathds{F}}^{2}(0,T;L^{2}(G_0)\times L_{\mathds{F}}^{2}(0,T;L^{2}(G))$. 

The study of the null controllability of stochastic parabolic equations has garnered significant attention in recent years. In particular, the exploration of null-controllability for linear stochastic heat equations is studied in \cite{MR1954986}, where the authors noted the complexity of achieving null-controllability for forward equations, finding a partial result due to the special condition of the final condition of the associated backward equation. In \cite{NullControlability-2009}, null controllability for forward parabolic equations was proven by introducing two controls: one in the drift term and another in the diffusion term, by using a stochastic Carleman estimate for backward parabolic equations. Later, in \cite{Qi.Lu:2011}, the control in the diffusion term was omitted, assuming the coefficients of the parabolic operator were independent of the spatial variable and the control is localized in a subset in time and space. This result was obtained using the Lebeau–Robbiano method \cite{MR1362548}, adapted to the stochastic context. As an additional note, using the Carleman inequalities methodology and the introduction of two controls, this approach has been employed to study various cases, including Neumann boundary conditions in \cite{NullControlabilityNeuman}, as well as dynamic boundary conditions \cite{NullControllabilityDinamic}. It has also been applied to fourth-order parabolic equations \cite{NullControllabilityFourtOrder,ZHANG2025113203}; for semilinear parabolic equations, we refer to \cite{HS:LB:P-2023}, and for degenerate equations where the diffusion term takes the form $\gamma(x) = x^\alpha$ with $\alpha \in (0,2)$ and $x \in (0,1)$, we refer to \cite{NullControllabilitydegenerate1}. 

On the other hand, we introduce the notation for the meshes and operators used in the semi-discrete spatial analysis. Let $n\geq 2$, $N \in \mathds{N}$ and $T > 0$ be fixed parameters. Consider the domain $G\triangleq(0,1)^n$ and define the mesh size as $h \triangleq 1/(N+1)$. The one-dimensional Cartesian grid over the interval $(0,1)$ is given by:
\[
\mathcal{K} \triangleq \{ x_i = ih \mid i =1,...,N\}.
\]
Thus, we consider a regular partition of the domain $G$ as $\mathcal{M} \triangleq G \cap \mathcal{K}^n$. %and denote by $\mathcal{M}_0 \triangleq G_0 \cap \mathcal{M}$ the subset associated with a subdomain $G_0 \subseteq G$.

Now, we define the meshes in the direction $e_{i}$, with $\{e_{i}\}_{i=1}^{n}$ the usual base of $\mathbb{R}^{n}$, as:
\begin{equation}
    \mathcal{M}_{i}^{\ast}\triangleq \tau_{+i}\left( \mathcal{M}\right)\cup\tau_{-i}\left( \mathcal{M}\right) \quad\; \text{and}\,\quad\overline{\mathcal{M}}_{ij}\triangleq(\mathcal{M}_{i}^{\ast})_{j}^{\ast}=\mathcal{M}_{ji}^{\ast\ast},
\end{equation}
where $\tau_{\pm i}(\mathcal{M})\triangleq\left\{ x\pm\frac{h}{2}e_{i}\mid x\in \mathcal{M}\right\}$. In addition, we define the boundary and closure of a set $\mathcal{M}$ as follows
\begin{equation*}
    \begin{split}
       \partial_{i}\mathcal{M}\triangleq \overline{\mathcal{M}}_{ii}\setminus \mathcal{M},\quad \partial \mathcal{M}\triangleq \bigcup_{i=1}^{n} \partial_{i}\mathcal{M} \quad\text{and}\;\overline{\mathcal{M}}\triangleq\, \mathcal{M}\cup \partial \mathcal{M}.
    \end{split}
\end{equation*}
We denote as $C(\mathcal{M})$ the set of functions defined on a regular mesh $\mathcal{M}$ to $\mathbb{R}$. 
We define the average and the difference operators as the operators from $C(\overline{\mathcal{M}})$ to $C(\mathcal{M}_{i}^{\ast})$:
\begin{equation*}
\begin{split}
    A_{i}u(x)&\triangleq \frac{1}{2}\left(\tau_{+i} u(x)+\tau_{-i}u(x)\right)\quad\text{and}\quad D_{i}u(x)\triangleq \frac{1}{h}\left(\tau_{+i}u(x)-\tau_{-i}u(x) \right).
    \end{split}
\end{equation*}
where we are using the notation $\tau_{\pm i}$ in the sense of operator (i.e. $\tau_{\pm i}u(x)=u(x\pm\frac{h}{2}e_{i}$)).

Given these definitions, we can obtain a product rule as follows.
\begin{proposition}[{\cite[Lemma 2.1 and 2.2]{boyer-2010-elliptic}}]\label{pro:product}
For $u,v\in C(\overline{\mathcal{M}})$, we have the following identities in $\mathcal{M}^{\ast}_{i}$. For the difference operator
\begin{equation}\label{eq:difference:product}
    D_{i}(u\,v)=D_{i}u\, A_{i}v+A_{i}u\,D_{i}v,
    \end{equation}
similarly, for the average operator \begin{equation}\label{eq:average:product}
     A_{i}(u\,v)= A_{i}u\,A_{i}v+\frac{h^{2}}{4}D_{i}u\,D_{i}v.
\end{equation}
Finally, in $\mathcal{M}$ we have
\begin{equation}\label{eq:averengeanddifference}
    u=A^2_i u-\frac{h^2}{4}D^2_iu.
\end{equation}
\end{proposition}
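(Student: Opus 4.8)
The plan is to reduce every identity to elementary algebra in the two half-shifted samples, exploiting that $A_i$ and $D_i$ are both linear combinations of the shift operators $\tau_{\pm i}$. Fix a node $x$ and abbreviate $u_\pm\triangleq \tau_{\pm i}u(x)$ and $v_\pm\triangleq \tau_{\pm i}v(x)$; then by definition $A_iu=\tfrac12(u_++u_-)$, $D_iu=\tfrac1h(u_+-u_-)$, and likewise for $v$. Since $\tau_{\pm i}$ acts by evaluation and is therefore multiplicative, $\tau_{\pm i}(uv)(x)=u_\pm v_\pm$, so the left-hand sides read $D_i(uv)=\tfrac1h(u_+v_+-u_-v_-)$ and $A_i(uv)=\tfrac12(u_+v_++u_-v_-)$.

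For \eqref{eq:difference:product} I would expand the right-hand side, $D_iu\,A_iv+A_iu\,D_iv=\tfrac1{2h}(u_+-u_-)(v_++v_-)+\tfrac1{2h}(u_++u_-)(v_+-v_-)$; multiplying out, the mixed terms $u_+v_-$ and $u_-v_+$ cancel and the surviving terms combine to $\tfrac1h(u_+v_+-u_-v_-)$, which is exactly $D_i(uv)$. For \eqref{eq:average:product} the analogous expansion of $A_iu\,A_iv+\tfrac{h^2}{4}D_iu\,D_iv=\tfrac14(u_++u_-)(v_++v_-)+\tfrac14(u_+-u_-)(v_+-v_-)$ shows that now the diagonal terms $u_+v_+$ and $u_-v_-$ double while the mixed terms cancel, giving $\tfrac12(u_+v_++u_-v_-)=A_i(uv)$. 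Both identities hold pointwise on $\mathcal{M}_i^\ast$, since that is the mesh on which the right-hand sides are defined.

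For \eqref{eq:averengeanddifference} it is cleanest to argue at the operator level. The composition rule $\tau_{+i}\tau_{-i}=\tau_{-i}\tau_{+i}=\mathrm{Id}$ gives $A_i^2=\tfrac14(\tau_{+i}+\tau_{-i})^2=\tfrac14(\tau_{+i}^2+2\,\mathrm{Id}+\tau_{-i}^2)$ and $\tfrac{h^2}{4}D_i^2=\tfrac14(\tau_{+i}-\tau_{-i})^2=\tfrac14(\tau_{+i}^2-2\,\mathrm{Id}+\tau_{-i}^2)$; subtracting, the second-shift terms $\tau_{\pm i}^2$ cancel and one is left with $A_i^2u-\tfrac{h^2}{4}D_i^2u=u$ on $\mathcal{M}$. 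I expect no genuine difficulty in the computations; the only point that needs care—and the reason the three identities are asserted on different meshes—is the bookkeeping of domains, namely checking that the compositions $A_i^2=A_i\circ A_i$ and $D_i^2=D_i\circ D_i$ are evaluated consistently across the nested grids $\mathcal{M}\subset\overline{\mathcal{M}}$ and $\mathcal{M}_i^\ast$, so that $A_i^2$ and $D_i^2$ indeed return functions defined on $\mathcal{M}$.
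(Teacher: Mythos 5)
Your computations are correct, and this is the standard direct verification: the paper does not prove Proposition \ref{pro:product} itself but cites \cite{boyer-2010-elliptic}, where the argument is precisely this expansion of $A_i$ and $D_i$ as linear combinations of the multiplicative shift operators $\tau_{\pm i}$, with the mixed (resp. diagonal) terms cancelling as you describe and the operator identity $\tau_{+i}\tau_{-i}=\mathrm{Id}$ handling \eqref{eq:averengeanddifference}. Your closing remark on domain bookkeeping is also the right point of care, since $A_i^2u$ and $D_i^2u$ at $x\in\mathcal{M}$ involve the full-step shifts $x\pm he_i$, which lie in $\overline{\mathcal{M}}_{ii}\subseteq\overline{\mathcal{M}}$.
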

%------------------
\begin{remark} Note that in particular, in $\mathcal{M}_{i}^{\ast}$ we have several identities that will be fundamental in the development of this work. First, using $u=v$ in the previous Proposition it follows
\begin{equation}\label{eq:derivadacuadrado}
    D_{i}(|u|^{2})=2D_{i}u\,A_{i}u,
\end{equation}
\begin{equation}\label{eq:promediocuadrado}
    A_{i}(|u|^{2})=\left|A_{i}u \right|^{2}+\frac{h^{2}}{4}\left| D_{i}u\right|^{2},
\end{equation}
besides we emphasize two particular inequalities from \eqref{eq:promediocuadrado}
\begin{equation}\label{eq:promedioinequality}
A_{i}(|u|^{2})\geq \left|A_{i}u \right|^{2},
\end{equation}
\begin{equation}\label{eq:diffaveinequality}
    A_{i}(|u|^2)\geq \frac{h^2}{4}\left|D_{i}u\right|^{2}.
\end{equation}
Moreover, in \cite[Proposition 2.2]{AA:perez:2024} the order of the product rule from Proposition \ref{pro:product} is generalized for the difference and average operators.
\end{remark}
%-----------------
Given a set $\mathcal{W}\subseteq \mathcal{M}$, we define the discrete integral as
\begin{equation*}
    \int_{\mathcal{W}} u\triangleq h^{n}\sum_{x\in \mathcal{W}} u(x),
\end{equation*}
and the following $L^{2}_{h}$ inner product for all $u,v\in C(\mathcal{M})$ as
$\displaystyle
    \langle u,v\rangle_{\mathcal{M}}\triangleq \int_{\mathcal{M}}u\,v,$
with the associated norm
\begin{equation*}
    \left\|u \right\|_{L^{2}_{h}(\mathcal{M})}\triangleq \sqrt{\langle u,u\rangle_{\mathcal{M}}}.
\end{equation*}

Thus, for $u\in C(\mathcal{M})$, we define its $L^{p}_{h}(\mathcal{M})$-norm as 
$$\displaystyle
    \left\| u\right\|_{L_{h}^{p}(\mathcal{M})}\triangleq\left\{\begin{array}{cc}\displaystyle
      \max_{x\in \mathcal{M}}\left\{ |u(x)|\right\},& \textrm{if } p=\infty,\\
      \displaystyle
       \left(\int_{\mathcal{M}} |u(x)|^p\right)^{1/p},&\textrm{if } p<\infty.
    \end{array}\right.
    $$
    
And for $u\in C(\overline{\mathcal{M}})$ we define its $W_{h}^{1,p}(\mathcal{M})$-norm as $$\displaystyle
    \|u\|_{W_h^{1,p}(\mathcal{M})}\triangleq \left\{\begin{array}{cc}\displaystyle
         \|u\|_{L^{\infty}_h(\mathcal{M})}+\max_{i=1,\ldots ,n}\|D_{i}u\|_{L^\infty_h(\mathcal{M}_i^\ast)},&  \textrm{if } p=\infty,\\
         \displaystyle\left(\|u\|^p_{L^p_h(\mathcal{M})}+\sum_{i=1}^n\|D_{i}u\|^p_{L^p_h(\mathcal{M}_i^\ast)}\right)^{1/p},
         & \textrm{if } p<\infty.
    \end{array}\right.$$

%---------------------------
To identify one side of the boundary $\partial_{i}\mathcal{M}$ with respect to the $i$-th direction, we define the exterior normal of the set $\mathcal{M}$ in the direction $e_{i}$ as the map $\nu_{i}\in C(\partial\mathcal{M}_{i})$ given by
\begin{equation*}
    \forall x\in\partial_{i}\mathcal{M}, \nu_{i}(x)=\begin{cases} \ \ 1 &  \mbox{if }\tau_{-i}(x)\in \mathcal{M}_{i}^{\ast} \mbox{ and }\tau_{+i}(x)\notin\mathcal{M}_{i}^{\ast},\\ 
    -1 &  \mbox{if }\tau_{-i}(x)\notin \mathcal{M}_{i}^{\ast} \mbox{ and }\tau_{+i}(x)\in\mathcal{M}_{i}^{\ast},\\ 
    \ \ \,0 &  \mbox{otherwise}.
    \end{cases}
\end{equation*}
 We also define the trace operator $t_{r}^{i}$ for $u\in C(\mathcal{M}^{\ast}_{i})$, for all $x\in \partial_{i}\mathcal{M}$, as follows
\begin{equation*}
     t_{r}^{i}(u)(x)\triangleq \begin{cases}
    u(\tau_{-i}(x))\ & \nu_{i}(x)=1,\\
    u(\tau_{+i}(x))& \nu_{i}(x)=-1,\\
     \ \ 0 & \nu_{i}(x)=0.\end{cases}
\end{equation*}
Moreover, for $u\in C(\partial_{i}\mathcal{M}),
$ we define the integral on the boundary $\partial_i \mathcal{M}$ as \break$\displaystyle
    \int_{\partial_{i}\mathcal{M}} u\triangleq h^{n-1}\sum_{x\in\partial_{i}\mathcal{M}} u(x).$
Then, we have the following integral by parts for the difference and average operators.
%-----------------------------
\begin{proposition}[{\cite[Lemma 2.2]{LOPD}}]\label{prop:integralbyparts}
For any $v\in C(\mathcal{M}_{i}^{\ast})$, $u\in C(\overline{\mathcal{M}}_{ii})$, we have for the difference operator
\begin{equation}\label{eq:int:dif}
    \int_{\mathcal{M}}u\,D_{i}v=-\int_{\mathcal{M}_{i}^{\ast}}v\,D_{i}u+\int_{\partial_{i}\mathcal{M}}u\,t_{r}^{i}(v)\nu_{i},
\end{equation}    
and for the average operator
\begin{equation}\label{eq:int:ave}
    \int_{\mathcal{M}}u\,A_{i}v=\int_{\mathcal{M}_{i}^{\ast}}v\,A_{i}u-\frac{h}{2}\int_{\partial_{i} \mathcal{M}}u\,t_{r}^{i}(v).
\end{equation}
\end{proposition}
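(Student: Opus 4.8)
The plan is to prove both identities by a direct discrete summation by parts, reducing the $n$-dimensional statement to a one-dimensional computation in the $i$-th coordinate. Since the operators $D_{i}$, $A_{i}$, and $\tau_{\pm i}$ act only in the direction $e_{i}$, I would split each lattice point as $x=(x',x_{i})$ with $x'$ the transverse coordinates and observe that, for fixed $x'$, the value $x_{i}$ ranges over $\{h,2h,\ldots,Nh\}$ on $\mathcal{M}$, over $\{\tfrac{h}{2},\tfrac{3h}{2},\ldots,(N+\tfrac12)h\}$ on $\mathcal{M}_{i}^{\ast}$, while $\partial_{i}\mathcal{M}$ contributes only the two slabs $x_{i}\in\{0,(N+1)h\}$. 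This reduces everything to a one-dimensional Abel summation, with the transverse sum and the factor $h^{n-1}$ carried along as spectators; that $h^{n-1}$ is exactly the normalization appearing in $\int_{\partial_{i}\mathcal{M}}$.

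First I would expand $\int_{\mathcal{M}}u\,D_{i}v=h^{n-1}\sum_{x\in\mathcal{M}}u(x)\bigl(\tau_{+i}v(x)-\tau_{-i}v(x)\bigr)$ and perform the change of summation index dictated by the shift operators: writing $y=x+\tfrac{h}{2}e_{i}$ turns $\sum_{x\in\mathcal{M}}u(x)\,\tau_{+i}v(x)$ into $\sum_{y\in\tau_{+i}(\mathcal{M})}\tau_{-i}u(y)\,v(y)$, and writing $y=x-\tfrac{h}{2}e_{i}$ turns $\sum_{x\in\mathcal{M}}u(x)\,\tau_{-i}v(x)$ into $\sum_{y\in\tau_{-i}(\mathcal{M})}\tau_{+i}u(y)\,v(y)$. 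Using $\mathcal{M}_{i}^{\ast}=\tau_{+i}(\mathcal{M})\cup\tau_{-i}(\mathcal{M})$, I would rewrite both resulting sums as sums over all of $\mathcal{M}_{i}^{\ast}$ minus the single extreme node that each shifted copy omits, namely $x_{i}=\tfrac{h}{2}$ for $\tau_{+i}(\mathcal{M})$ and $x_{i}=(N+\tfrac12)h$ for $\tau_{-i}(\mathcal{M})$. The full-$\mathcal{M}_{i}^{\ast}$ part then recombines, via $D_{i}u=\tfrac1h(\tau_{+i}u-\tau_{-i}u)$, into exactly $-\int_{\mathcal{M}_{i}^{\ast}}v\,D_{i}u$, while the two leftover extreme nodes form the boundary contribution.

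The last step is to match these leftover nodes with $\int_{\partial_{i}\mathcal{M}}u\,t_{r}^{i}(v)\,\nu_{i}$. For the node $y$ with $y_{i}=\tfrac{h}{2}$, its inner shift $\tau_{-i}u(y)$ equals $u(x)$ for the boundary point $x\in\partial_{i}\mathcal{M}$ with $x_{i}=0$, where $\nu_{i}(x)=-1$ and $y=\tau_{+i}(x)$, so that $v(y)=t_{r}^{i}(v)(x)$; symmetrically, the node $y_{i}=(N+\tfrac12)h$ pairs with the boundary point $x_{i}=(N+1)h$, where $\nu_{i}(x)=1$ and $v(y)=t_{r}^{i}(v)(x)$. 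Tracking the signs produced by $D_{i}$ shows that these two leftovers assemble precisely into $+\int_{\partial_{i}\mathcal{M}}u\,t_{r}^{i}(v)\,\nu_{i}$, giving \eqref{eq:int:dif}. For the average identity \eqref{eq:int:ave} I would run the identical index shift on $\int_{\mathcal{M}}u\,A_{i}v=\tfrac{h^{n}}{2}\sum_{x\in\mathcal{M}}u(x)\bigl(\tau_{+i}v(x)+\tau_{-i}v(x)\bigr)$; now the two extreme nodes enter with the \emph{same} sign (no factor $\nu_{i}$), the full-$\mathcal{M}_{i}^{\ast}$ part collapses to $\int_{\mathcal{M}_{i}^{\ast}}v\,A_{i}u$ through $A_{i}u=\tfrac12(\tau_{+i}u+\tau_{-i}u)$, and the prefactor $\tfrac{h^{n}}{2}=\tfrac{h}{2}h^{n-1}$ produces the term $-\tfrac{h}{2}\int_{\partial_{i}\mathcal{M}}u\,t_{r}^{i}(v)$.

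I expect the only delicate point to be the boundary bookkeeping: correctly deciding, for each extreme half-integer node of $\mathcal{M}_{i}^{\ast}$, which boundary point of $\partial_{i}\mathcal{M}$ it is adjacent to, and verifying that the definitions of $\nu_{i}$ and $t_{r}^{i}$ reproduce exactly the sign and the trace value that the summation by parts throws off. Everything else is a routine reindexing, and the reduction to one dimension makes the combinatorics transparent, since no transverse boundary terms ever appear.
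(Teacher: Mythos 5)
Your proof is correct. The paper does not actually prove this proposition --- it is imported verbatim from \cite[Lemma 2.2]{LOPD} --- so there is nothing to compare against in the source; your direct reindexing/Abel-summation argument, with the two extreme half-integer nodes of $\mathcal{M}_i^{\ast}$ correctly matched to the slabs $x_i=0$ (where $\nu_i=-1$, $t_r^i(v)=v(\tau_{+i}(x))$) and $x_i=(N+1)h$ (where $\nu_i=1$, $t_r^i(v)=v(\tau_{-i}(x))$), is exactly the standard proof and all signs and normalizations ($h^{n-1}$ on the boundary, the extra $\tfrac{h}{2}$ in the average identity) check out.
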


%------------------------------------------------------------------------------------------------------------------------------------------------------------------------------------------------------%
Given the setting previously introduced, we end this section by formulating the semi-discrete stochastic controlled system of the \eqref{systemofcontrolcontinuos} in the following way. Let us denote $\displaystyle \tilde{\mathcal{P}}_{h}y\triangleq dy-\sum_{i=1}^{n}D_i(\gamma_iD_iy)dt$, then we consider the following semi-discrete system.
\begin{equation} \label{systemofcontrolDiscrete}
\left\{
\begin{aligned}
    \tilde{\mathcal{P}}_h y &= \left( \sum_{i=1}^n A_i D_i(a_{1i} y) + a_2 y + \chi_{G_0 \cap \mathcal{M}} u \right) dt 
    + (a_3 y + v) dB(t)\text{ in}\, Q, \\
    y &= 0 \text{ on }\, \partial Q, \\
    y(0) &= y_0 \text{ in }\, \mathcal{M},
\end{aligned}
\right.
\end{equation}
for any given initial value $y_0\in L_{\mathcal{F}_{0}}^{2}(\Omega; L_{h}^2(\mathcal{M}))$ and controls $(u,v)\in L^2_{\mathds{F}}(0,T;L_{h}^2(G_0\cap\mathcal{M}))\times L^2_{\mathds{F}}(0,T;L_{h}^2(\mathcal{M}))$, with $Q\triangleq \mathcal{M}\times (0,T)$, $Q_i^{\ast}\triangleq \mathcal{M}_i^{\ast}\times(0,T)$ and $\partial Q\triangleq \partial \mathcal{M}\times (0,T)$; where $\gamma_i$ denote the sample of $\gamma^{i}$ onto both the primal and dual meshes, and $a_{1i}, a_{2}, a_{3}, a_{4}$ stands for the evaluation on the primal mesh, respectively. 

In turn, the study of semi-discrete and fully-discrete counterparts of established controllability results has received much attention, particularly in contexts where continuous results do not hold. For example, in general, the discrete versions of the unique continuation property are false since an explicit counterexample is presented in \cite{zuazuaexample} in the deterministic context. Moreover, in \cite{boyer2013penalised}, the potential non-controllability of semi-discrete systems is identified, even when the continuous problem is controllable, due to the presence of uncontrollable eigenmodes in the discrete operator. Adapting the idea from \cite{boyer2013penalised}, it is possible to show that a similar situation occurs for the stochastic setting. Indeed, considering $n=2$, $\gamma_i=1$, $a_{1i}=0$ for $i=1, 2$ and $a_2=a_3=0$ in \eqref{systemofcontrolDiscrete}, the forward semi-discrete system of the parabolic equation can be written as
\begin{equation}\label{eq:example1}
\begin{cases}
     dy-\sum_{i=1}^{2}D_{i}^{2}ydt=\chi_{G_0 \cap \mathcal{M}}udt+vdB(t),  &\text{in } \mathcal{M}\times (0,T), \\
     y=0,  &\text{on }\partial\mathcal{M}\times (0,T). 
\end{cases}
\end{equation}
Let $(x_{i},x_{j})\in\mathcal{M}$. We define
\begin{equation*}
    \psi(x_{i},x_{j})\triangleq\left\{\begin{array}{cl}
     (-1)^i,    & i=j, \\%\,\text{even},  \\
      0,   & i\not=j,%, %\\
      %-1, & i=j\, \text{odd}.
    \end{array}\right.
\end{equation*}
and extend it by zero on $\partial\mathcal{M}$. Notice that
\begin{equation}\label{eq:eigen}
    \sum_{i=1}^{2}D_{i}^{2}\psi=-\frac{4}{h^2}\psi.
\end{equation}
Thus, $\psi$ is an eigenfunction of $\sum_{i=1}^{2}D_{i}^{2}$. Let $\Tilde{\psi}=e^{-4(T-t)/h^2}\psi$. Applying Itô's formula to the map $\displaystyle t\longmapsto\int_{\mathcal{M}}y(t)\Tilde{\psi}(t)$, we obtain
\begin{equation}\label{eq:deduction}
\begin{split}
    \int_{\mathcal{M}}d\left(y(t)\Tilde{\psi}(t)\right)=&\int_{\mathcal{M}}\Tilde{\psi}\,dy+\int_{\mathcal{M}}\partial_{t}(\Tilde{\psi})y\,dt\\
    =&\int_{\mathcal{M}}\Tilde{\psi}\,dy+\int_{\mathcal{M}}\frac{4}{h^2}\Tilde{\psi}\,y\,dt,
\end{split}
\end{equation}
since $\Tilde{\psi}$ is deterministic. Using \eqref{eq:eigen}, we can rewrite the second integral from the right-hand side above, which yields
\begin{align}
         \int_{\mathcal{M}}d\left(y(t)\Tilde{\psi}(t)\right)=&\int_{\mathcal{M}}\Tilde{\psi}\,dy-\int_{\mathcal{M}}\sum_{i=1}^{2}D_{i}^{2}\left(\Tilde{\psi}\right)\,y\,dt \nonumber.
         \end{align}
Now, by using the integration by parts from Proposition \ref{prop:integralbyparts} we get
         \begin{align*}
          \int_{\mathcal{M}}d\left(y(t)\Tilde{\psi}(t)\right)=& \int_{\mathcal{M}}\Tilde{\psi}\,dy+\sum_{i=1}^2\int_{\mathcal{M}_i^{\ast}}D_i(\Tilde{\psi})D_i(y)\,dt-\sum_{i=1}^2\int_{\partial_i \mathcal{M}}y t_r^i(D_i\Tilde{\psi})\nu_i\;dt \nonumber\\
         =&\int_{\mathcal{M}}\Tilde{\psi}(dy-\sum_{i=1}^{2}D_{i}^{2}ydt)+\sum_{i=1}^2\int_{\partial_i \mathcal{M}}\left[\Tilde{\psi}t_r^i(D_iy)-y t_r^i(D_i\Tilde{\psi})\right]\nu_i\;dt.
\end{align*}
Since $y$ satisfies \eqref{eq:example1} and $\Tilde{\psi} = 0$ on $\partial\mathcal{M}$, it follows that
\begin{equation*}
       \int_{\mathcal{M}}d\left(y(t)\Tilde{\psi}(t)\right)=\int_{\mathcal{M}}\Tilde{\psi}\chi_{\mathcal{M}\cap G_0}u\,dt+\int_{\mathcal{M}}\Tilde{\psi}\, v dB(t).
\end{equation*}
Integrating over $(0,T)$ and taking expectations in the above equation, we have 
\begin{equation*}
    \left.\E\int_{\mathcal{M}}y(t)\Tilde{\psi}(t)\right|_0^T=\E\int_{0}^T\int_{G_0 \cap \mathcal{M}}\Tilde{\psi}u\,dt.
\end{equation*}
Let $G_0\subset G$ be such that $G_0 \cap \mathcal{M}$  does not intersect the support of the eigenfunction $\psi$ (i.e. for all $(x_i, x_j) \in G_0 \cap \mathcal{M}$, we have $i \neq j$). For any control pair $(u,v)$, we obtain
\begin{equation*}
    \E\int_{\mathcal{M}}\psi y(T)=e^{-4T/h^2}\E\int_{\mathcal{M}}\psi y(0),
\end{equation*}
and this quantity (which does not depend on the controls $(u,v)$) is nonzero if the initial data $y(0)$ satisfies $\displaystyle \E\int_{\mathcal{M}}\psi y(0)\not=0$. Observe that this pathology does not occur in the continuous setting since, according to \cite{NullControlability-2009}, the system \eqref{systemofcontrolcontinuos} is null-controllable for any initial data in $L^2_{\mathcal{F}_0}(\Omega,L^2(G))$ and this is one of the reasons why the analysis of semi-discrete problems contains new difficulties.

Due to the aforementioned difficulty, we cannot achieve null controllability at the discrete level. Therefore, we will not try to drive the solution to zero at time $T$. Instead, we require that, at time $T$, the norm of the solution be bounded by a function that depends on the discretization mesh size $h$, allowing uniform convergence towards the null controllability of the continuous problem. To this end, we consider a non-decreasing function $\phi(h) \in (0, \infty)$ for $h \in (0, \infty)$ such that $\lim_{h \to 0} \phi(h) = 0$ and for any $\kappa>0 $ satisfies
\begin{equation}\label{condphi}
    \lim\inf_{h\to 0} \frac{\phi(h)}{e^{-\kappa/h}}>0.
    \end{equation}
Thus, we are interested in the following notion of controllability for the semi-discrete stochastic parabolic system \eqref{systemofcontrolDiscrete}.
\begin{definition}
    Let $T>0$ and $G_0\subset G$. The system \eqref{systemofcontrolDiscrete} is said to be $\phi$-\textit{null controllable} if there exists $\tilde{h}>0$ such that for all $h\leq \tilde{h}$ and for any initial data $y_0\in L^2_{\mathcal{F}_0}(\Omega, L_{h}^2(\mathcal{M}))$, there exist control pairs $(u,v)\in L_{\mathds{F}}^{2}(0,T;L_{h}^{2}(\mathcal{M}\cap G_0))\times L_{\mathds{F}}^{\infty}(0,T;L_{h}^{2}(\mathcal{M}))$ uniformly bounded with respect to $h$ such that the solution $y$ of the semi-discrete system \eqref{systemofcontrolDiscrete} satisfies, at time $T$,
    $$
    \E\int_{\mathcal{M}}|y(T)|^2\leq C \phi(h)\E\int_{\mathcal{M}}|y_0|^2,
    $$
    where $\phi$ satisfies \eqref{condphi} and $C>0$ is uniform with respect to $h$.
\end{definition}
\begin{remark}
    The system \eqref{systemofcontrolDiscrete} can be made \textit{$\phi$-null controllable} using only a control $u$ in the drift term, or perhaps with an additional localized control $v$ acting on the diffusion term. This remains an open and interesting problem in both the discrete and the continuous setting. As discussed in \cite[Remark 1.8]{baroun2023nullcontrollabilitystochasticparabolic}, most existing methods rely on Carleman estimates with weight functions that depend only on time and space. We believe that a possible way to address this problem could involve the use of random weight functions. Some results on the null and approximate controllability of forward stochastic heat equations when the coefficients are space-independent with a single control in the drift term using the spectral method can be found in \cite{Qi.Lu:2011}. However, these results are developed within a continuous framework, and their adaptation to a discrete setting remains an interesting direction for future work.
\end{remark}

\par The above notion of $\phi$-null controllability has been obtained for semi-discrete parabolic operators in the deterministic context via Carleman estimates. For instance, in the spatial semi-discrete setting in arbitrary dimensions for the semilinear heat equation, the $\phi$-null controllability is established in \cite{boyer-2014} and in the one-dimensional case for the linear heat equation in \cite{boyer-2010-1d-elliptic}. Moreover, the case of the discontinuous diffusion coefficient is established in  \cite{Thuy}. In turn, the fully-discrete in 1D is studied in \cite{GC-HS-2021}  and \cite{LMPZ-2023} for Dirichlet and dynamic boundary conditions, respectively. A similar controllability issue is considered for time semi-discrete parabolic systems, see for instance \cite{BHS:2020} and \cite{HS2023}. For higher-order operators, we refer to \cite{CLTP-2022} and \cite{kumar:hal-04828698}.

Based on the previous discussion, this work establishes the $\phi$-\textit{null controllability} of the semi-discrete spatial approximation of system \eqref{systemofcontrolcontinuos}, as formulated in system \eqref{systemofcontrolDiscrete}. We improve upon the results presented in \cite{zhao:2024} in several ways. First, our result holds in arbitrary dimension. Second, system \eqref{systemofcontrolDiscrete} considers more general diffusive and potential coefficients. These new considerations require a non-trivial decomposition of the conjugate operator (see Section \ref{sub:conjugated}). Moreover, we also relax the integrability condition on the coefficient $a_2$, as required in \cite{NullControlability-2009}, thereby improving the controllability result of \cite{zhao:2024}. This latter improvement is based on a discrete Sobolev-type inequality, which is presented below. 

\begin{theorem}\label{teo:discreteSobolev}(\textbf{Discrete Sobolev-Type Inequality.}) Let $n>1$. There exists a constant $C$ independent of $h$ such that the following inequality holds:
$$
\|u\|_{L_h^{p^{\ast}}(\mathcal{M)}}\leq C\|u\|_{W_h^{1,p}(\mathcal{M})}\quad \forall u\in W_h^{1,p}(\mathcal{M}),
$$
with $u=0$ on $\partial\mathcal{M}$, where 
\begin{itemize}
\item If $1\leq p<n$, then $p^{\ast}$ is given by $\displaystyle \frac{1}{p^{\ast}}=\frac{1}{p}-\frac{1}{n}$,
\item If $p=n$ then $p^{\ast}\in [p,\infty)$. 
\end{itemize}
\end{theorem}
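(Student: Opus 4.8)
The plan is to mimic the classical Gagliardo--Nirenberg--Sobolev argument, but carried out entirely with the difference operators $D_i$ and with constant book-keeping designed to make every constant independent of $h$. I would split the proof into three stages. The base case is $p=1$, so $p^\ast=n/(n-1)$. The starting point is a discrete fundamental theorem of calculus: since $u=0$ on $\partial\mathcal{M}$, telescoping the definition of $D_i$ along the $i$-th grid line gives, for each $x\in\mathcal{M}$,
\[
|u(x)|\le h\sum_{z\in L_i(\hat x)}|D_i u(z)|=:G_i(\hat x),
\]
where $L_i(\hat x)\subset\mathcal{M}_i^\ast$ is the full $i$-line through $x$ and $\hat x$ denotes the remaining $n-1$ coordinates, so that $G_i$ is independent of $x_i$. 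Raising to the power $1/(n-1)$ and multiplying over $i$ yields $|u(x)|^{n/(n-1)}\le\prod_{i=1}^n G_i(\hat x)^{1/(n-1)}$. Summing over $x\in\mathcal{M}$ and applying the discrete Loomis--Whitney (Gagliardo) inequality, which follows by induction on $n$ from the generalized discrete H\"older inequality with $n-1$ exponents all equal to $n-1$, gives $\sum_x|u|^{n/(n-1)}\le\prod_i\big(\sum_{\hat x}G_i\big)^{1/(n-1)}$. Since $\sum_{\hat x}G_i=h\sum_{z\in\mathcal{M}_i^\ast}|D_i u(z)|$, converting both sides to the $L^q_h$ normalization makes the powers of $h$ cancel exactly, leaving the scale-invariant bound $\|u\|_{L^{n/(n-1)}_h}^{n/(n-1)}\le\prod_i\|D_i u\|_{L^1_h(\mathcal{M}_i^\ast)}^{1/(n-1)}$, and hence by AM--GM $\|u\|_{L^{n/(n-1)}_h}\le\frac1n\sum_i\|D_i u\|_{L^1_h}\le C\|u\|_{W^{1,1}_h}$ with $C$ independent of $h$.

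Second, for $1<p<n$ I would bootstrap by applying the base case to $v=|u|^\gamma$ with $\gamma=\frac{p(n-1)}{n-p}>1$, which again vanishes on $\partial\mathcal{M}$. The crucial discrete ingredient replacing the chain rule is the elementary estimate $|D_i(|u|^\gamma)(z)|\le\gamma(|u_+|^{\gamma-1}+|u_-|^{\gamma-1})|D_i u(z)|$, obtained from the mean value theorem for $t\mapsto|t|^\gamma$ applied to the two primal neighbours $u_\pm$ of the dual point $z$. Feeding this into the base-case bound for $v$ and applying the discrete H\"older inequality with exponents $p'$ and $p$ gives $\|u\|_{L^{p^\ast}_h}^\gamma=\||u|^\gamma\|_{L^{n/(n-1)}_h}\le C\gamma\sum_i\|u\|_{L^{(\gamma-1)p'}_h}^{\gamma-1}\|D_i u\|_{L^p_h}$; the algebraic choice of $\gamma$ makes both $\gamma n/(n-1)$ and $(\gamma-1)p'$ equal to $p^\ast$, so after dividing by $\|u\|_{L^{p^\ast}_h}^{\gamma-1}$ one obtains $\|u\|_{L^{p^\ast}_h}\le C\|u\|_{W^{1,p}_h}$. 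Here I must verify that the sums of $|u_\pm|^{(\gamma-1)p'}$ over $z\in\mathcal{M}_i^\ast$ are controlled by $\|u\|_{L^{p^\ast}_h}^{p^\ast}$: because $u=0$ on $\partial\mathcal{M}$, reindexing from dual to primal points counts each primal value a bounded number of times, so no $h$-dependent factor is produced.

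Third, for the endpoint $p=n$ with $p^\ast=q\in[n,\infty)$ fixed, the bootstrap exponent $\gamma$ blows up, so instead I would pass through the subcritical exponent $p_1=\frac{nq}{n+q}\in[n/2,n)$, for which $p_1^\ast=q$, and invoke the already-proved subcritical inequality $\|u\|_{L^q_h}\le C\|u\|_{W^{1,p_1}_h}$. Since $G=(0,1)^n$ has discrete measure $h^n\#\mathcal{M}=(1-h)^n\le 1$, the discrete Jensen/H\"older inequality yields $\|f\|_{L^{p_1}_h}\le\|f\|_{L^n_h}$, whence $\|u\|_{W^{1,p_1}_h}\le\|u\|_{W^{1,n}_h}$ and the claim follows with a constant depending on $q$ but not on $h$.

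I expect the main obstacle to be the second stage: establishing the discrete product/chain-rule estimate for $D_i(|u|^\gamma)$ and, above all, checking that the repeated passage between the primal mesh $\mathcal{M}$ and the dual meshes $\mathcal{M}_i^\ast$, together with the boundary condition $u=0$ on $\partial\mathcal{M}$, introduces only $h$-independent combinatorial constants, so that the resulting Sobolev constant is genuinely uniform in $h$. The discrete Loomis--Whitney step in the base case is classical but likewise requires careful accounting of the boundary slices.
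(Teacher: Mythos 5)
Your proposal is correct and its skeleton (discrete Gagliardo/Loomis--Whitney base case for $p=1$, bootstrap on a power of $u$ for $1<p<n$, separate treatment of $p=n$) coincides with the paper's, but two of the three steps are implemented by genuinely different means. For the bootstrap, the paper works with $v=|u|^{m-1}u$ and replaces the chain rule by the exact algebraic identity
\begin{equation*}
D_i(|u|^{m-1}u)=A_i(u)\,D_i(|u|)\sum_{k=0}^{m-2}(A_i|u|)^{k}A_i(|u|^{m-2-k})+A_i(|u|^{m-1})\,D_i(u),
\end{equation*}
together with $A_i(|u|^m)\le C(A_i|u|)^m$, both proved by induction over \emph{integer} $m$, and then controls $\int_{\mathcal{M}_i^{\ast}}(A_i|u|)^{p'(m-1)}$ by an integration by parts for the average operator; you instead use the mean-value bound $|D_i(|u|^{\gamma})|\le\gamma\,(|u_+|^{\gamma-1}+|u_-|^{\gamma-1})|D_iu|$ and a direct dual-to-primal reindexing. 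Your route is actually slightly more robust: it is valid for any real $\gamma\ge 1$, whereas the paper's telescoping identity presupposes $m\in\mathbb{N}$ even though the required exponent $m=(n-1)p^{\ast}/n$ is not an integer for general $p$, a point the paper glosses over; the paper's identity, on the other hand, is sharper in that it keeps track of the average operators exactly and reuses the mesh-duality machinery already developed for the Carleman estimate. For $p=n$ the paper iterates the bootstrap with $m=n,n+1,\dots$ and interpolates to cover all $p^{\ast}\in[n,\infty)$, while you pass through the subcritical exponent $p_1=nq/(n+q)<n$ with $p_1^{\ast}=q$ and use that the discrete domain has measure at most one; both are valid, and yours reaches any fixed $q$ in one step at the cost of a constant depending on $q$ (which the paper's constant also implicitly does). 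The points you flag as delicate --- the $h$-uniformity of the primal/dual reindexing and the boundary accounting in the Loomis--Whitney step --- are exactly the points the paper addresses via \eqref{eq:Mdual}, \eqref{eq:oneandmultidirection} and the average-operator integration by parts, so no gap remains.
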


Consequently, the methodology proposed here can be adapted to study the $\phi$-null controllability of various semi-discrete stochastic parabolic operators by developing semi-discrete Carleman estimates in arbitrary dimensions and theoretical results for the discrete setting.
%%%%%%%%%%%%%%%%%%%%%%%%%%%%%%%%%%%%%%%%%%%%%%%%%%%%%%%%%%%%%%%%%%%%%

The paper is structured as follows. Section \ref{sec:proof:carleman} is devoted to proving the Carleman estimate presented in Theorem \ref{theo:Carleman}. The relaxed observability inequality and the controllability result (Theorem \ref{Teo:Observability inequality} and \ref{theo:nullcontrolability}, respectively) are proved in Section \ref{sec:ObservabilityandControllability}. For the sake of clarity, the results related to the weight functions, a large number of proofs of intermediate estimates, and lemmas are provided in Appendices \ref{sec:weight:function}, \ref{tecgnicalstepcrossproduct}, and \ref{sec:intermediateResults}, respectively. 
%%%%%%%%%%%%%%%%%%%%%%%%%%%%%%%%%%%%%%%%%%%%%%%%%%%%%%%%%%%%%%%%%%%%%%%%%%%%%%%%%%%%%%%%%%%%%%%%%%%%%%%%%%%%%%%%%%%%%%%%%%%%%%%%%%%%%%%%%%%%%%%%%%%%%%%%%%%%%%%%%%%%%%%%
\section{The semi-discrete Carleman estimate}\label{sec:proof:carleman}
To establish the $\phi$-observability inequality for \eqref{systemadjstohocastic}, we will first prove a Carleman inequality associated with the solution \eqref{systemadjstohocastic}. Hence, we introduce the following assumption on our weight function.

%%%%%%%%%%%
\textbf{Assumption:} Let $G_1$ be an arbitrary fixed sub-domain of $G$ such that $\overline{G_1}\subset G_0$. Let $\widehat{G}$ be a smooth open and connected neighborhood of $\overline{G}$ in $\mathbb{R}^n$. The function $x \mapsto \psi(x)$ is in $\mathcal{C}^p(\widehat{G}, \mathbb{R})$, $p$ sufficiently large, and satisfies, for some $c > 0$,
\begin{equation}\label{assumtion:psi}
\psi > 0 \quad \text{in } \widehat{G}, \quad |\nabla \psi| \geq c \quad \text{in } \widehat{G} \setminus G_1, \quad \text{and} \quad \partial_{\nu_i} \psi(x) \leq -c < 0, \quad \text{for } x \in V_{\partial_i G},
\end{equation}
where $V_{\partial_i G}$ is a sufficiently small neighborhood of $\partial_i G$ in $\widehat{G}$, in which the outward unit normal $\nu_i$ to $G$ extends from $\partial_i G$.

For $\lambda\geq 1$ and $K>\|\psi\|_{\infty}$, we introduce the function
\begin{align}\label{funcion-peso-2}
    \varphi(x)&=e^{\lambda\psi(x)}-e^{\lambda K},
\end{align}
 and for $0<\delta < 1/2$,
\begin{equation}\label{theta-delta}
    \theta(t)=\frac{1}{(t+\delta T)(T+\delta T-t)},\quad t\in [0,T].
\end{equation}
Given $\tau\geq 1$ we set
\begin{equation}\label{eq1}
s(t)=\tau\theta(t).
\end{equation}

\begin{remark}
The parameter $\delta$ is chosen such that $0<\delta\leq\frac{1}{2}$ to avoid the singularities at time $t=0$ and $t=T$. Notice that
\begin{equation}\label{eq:theta} 
\underset{t\in [0,T]}{\max} \theta(t)=\theta(0)=\theta(T)=\frac{1}{T^{2}\delta(1+\delta)}\leq \frac{1}{T^{2}\delta}, 
\end{equation}
and $\underset{t\in[0,T]}{\min}\theta(t)= \theta(T/2)=\frac{4}{T^{2}(1+2\delta)^2}$. Also
\begin{equation}\label{eq:theta'}\frac{d\theta}{dt}=2\left(t-\frac{T}{2}\right)\theta^2.
\end{equation}
\end{remark}

We can state our first main result, a semi-discrete Carleman inequality for the discrete backward system \eqref{systemadjstohocastic}.

\begin{theorem}\label{theo:Carleman}
Let $\gamma_{0}>0$, $\psi$ satisfying assumption \eqref{assumtion:psi} and $\varphi$ according to \eqref{funcion-peso-2}. For $\lambda\geq 1$ sufficiently large, there exist $C$, $\tau_{0}\geq 1$, $h_{0}>0$, $\varepsilon_0 >0$, depending on $G_{0}$, $G_{1}$, $\gamma_{0}$, $T$, and $\lambda$, such that for any $\gamma$, with $\mbox{reg}(\gamma)\leq \gamma_{0}$ we have
    \begin{equation}\label{eq:inequalityCarleman} 
 \begin{split}  
J(w)+&\E\int_{Q}s^{3}\varphi^{3}e^{2s\theta\varphi}|w|^{2}dt\\
\leq &C\left(\E\int_{0}^T\int_{ G_0 \cap \mathcal{M}}s^{3}\varphi^{3}e^{2s\theta\varphi}\,|w|^{2}dt+\E\int_{Q}e^{2s\theta\varphi}|f|^2dt+\E\int_{Q}s^2 e^{2s\theta\varphi}|g|^2dt\right.\\
&\left.+h^{-2}\left.\E\int_{\mathcal{M}}e^{2s\theta\varphi}|w|^2\right|_{t=0}+h^{-2}\left.\E\int_{\mathcal{M}}e^{2s\theta\varphi}|w|^2\right|_{t=T}\right),
\end{split}
\end{equation}
for all $\tau\geq \tau_0(T+T^2)$, $0<h\leq h_0$, $0<\delta<1/2$, $sh\leq \epsilon_0$, where $\displaystyle J(w)\triangleq \sum_{i=1}^{n}\E\int_{Q_{i}^{\ast}}s\varphi e^{2s\theta\varphi}\,|D_{i}w|^{2}\,dt+\E\int_{Q}s\varphi e^{2s\theta\varphi}|A_iD_iw|^2\,dt$ and $w$ satisfy $dw+\sum_{i\in\llbracket 1, n \rrbracket} D_i(\gamma_i D_{i}w)\,dt=fdt+gdB(t)$ with $w=0$ on $\partial \mathcal{M}$.
\end{theorem}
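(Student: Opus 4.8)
The plan is to follow the classical Carleman strategy adapted simultaneously to the semi-discrete and the stochastic setting. First I would conjugate the operator with the weight $r \triangleq e^{s\theta\varphi}$, setting $z \triangleq r\,w$. Since $r$ is deterministic in $(t,x)$, Itô's formula gives $dz = r\,dw + (\partial_t r)\,w\,dt$ with no quadratic correction coming from $r$; substituting the equation satisfied by $w$ transforms the problem into one for $z$ governed by the conjugated spatial operator $\mathcal{L}_s z \triangleq r\sum_i D_i\bigl(\gamma_i D_i(r^{-1}z)\bigr)$ plus the time term $(\partial_t r)r^{-1}z$. The essential structural step is to split the resulting operator into a formally self-adjoint part $\mathcal{S}z$ (carrying the second-order term and the zeroth-order term of size $s^3\varphi^3$) and a skew-adjoint part $\mathcal{A}z$ (carrying the dominant first-order term of size $s\varphi$). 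In the continuous setting this splitting is routine, but here the discrete operators $D_i$ and $A_i$ do not commute with multiplication by $r$: each commutator produces semi-discrete remainders carrying powers of $h$ together with factors of $s$, $\lambda$, $\varphi$. Organizing and controlling these remainders is precisely where the decomposition announced in Section \ref{sub:conjugated} and the smallness window $sh \leq \varepsilon_0$ become indispensable.

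Next I would establish the energy identity. Pairing the conjugated equation against $\mathcal{S}z$ and $\mathcal{A}z$, integrating over $Q$, and taking expectation (so that the martingale part of the $g\,dB$ contribution vanishes), the crucial cross term $2\,\E\int_Q \mathcal{S}z\,\mathcal{A}z$ is expanded by discrete integration by parts via Proposition \ref{prop:integralbyparts}. This yields the dominant positive contributions $\E\int_Q s^3\varphi^3|z|^2$ and $\sum_i \E\int_{Q_i^{\ast}} s\varphi|D_iz|^2$ (together with the $A_iD_iz$ term of $J$). The diffusion term is handled through the Itô correction $\E\int_Q d\langle z\rangle$ combined with Cauchy--Schwarz against the first-order weight factors, which produces the right-hand contribution $s^2 e^{2s\theta\varphi}|g|^2$, while the pairing with the source closes up as $e^{2s\theta\varphi}|f|^2$. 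Here the assumptions \eqref{assumtion:psi} are decisive: $|\nabla\psi|\geq c$ on $\widehat G\setminus G_1$ makes the zeroth-order coefficient positive away from $G_1\subset G_0$, so the only region where positivity may fail is absorbed into the observation term over $G_0\cap\mathcal{M}$; and $\partial_{\nu_i}\psi\leq -c<0$ near $\partial_iG$ gives the boundary terms generated by the discrete integration by parts the correct sign.

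Then I would collect and absorb the error terms. The semi-discrete commutator remainders are estimated, using the product rules \eqref{eq:difference:product}--\eqref{eq:averengeanddifference} and the inequalities \eqref{eq:promedioinequality}--\eqref{eq:diffaveinequality}, by a small multiple of the dominant terms plus $J(w)$; the required smallness follows by fixing $\lambda$ large, then $\tau_0$ large, then $h_0$ small subject to $sh\leq\varepsilon_0$, in that order. The contributions of the energy identity evaluated at the time endpoints produce the terms $h^{-2}\,\E\int_{\mathcal{M}} e^{2s\theta\varphi}|w|^2\big|_{t=0,T}$: the factor $h^{-2}$ reflects the genuinely semi-discrete pathology exhibited by the eigenmode in \eqref{eq:eigen}--\eqref{eq:example1}, and since $\theta$ stays bounded at $t=0,T$ thanks to the $\delta T$ regularization, these are legitimate finite quantities. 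Finally I would undo the change of variables, converting the estimates for $z$, $D_iz$ and $A_iD_iz$ back into the stated quantities $J(w)$ and $\E\int_Q s^3\varphi^3 e^{2s\theta\varphi}|w|^2$, at the cost of further $O(sh)$ discrepancies that are absorbed as before.

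The hardest part will be the simultaneous control of the two sources of error that are absent in the continuous deterministic theory. On the one hand, the non-commutation of the discrete operators with the weight forces the delicate operator decomposition and confines the estimate to the window $sh\leq\varepsilon_0$, beyond which the discrete artifacts genuinely destroy positivity. On the other hand, every integration by parts and every Itô expansion must be carried out in expectation with careful bookkeeping of the $dB$ and $d\langle\cdot\rangle$ contributions, so that the $g$-term appears with the correct power of $s$ and the $f$-term closes via Cauchy--Schwarz without disturbing the dominant cross term. Making these two bookkeeping problems compatible, while keeping the weight-function geometry of \eqref{assumtion:psi} intact, is the crux of the argument.
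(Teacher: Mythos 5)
Your outline reproduces the paper's strategy: conjugation with the exponential weight, a decomposition of the conjugated operator, an expansion of the cross product by discrete integration by parts and It\^o's formula, absorption of the $\mathcal{O}_{\lambda}(sh)$ remainders inside the window $sh\le\varepsilon_0$, and a final return to the variable $w$. The paper does not literally use a self-adjoint/skew-adjoint splitting; it groups the conjugated operator as $C(z)\,dt+B(z)$ with $C=C_1+\cdots+C_5$ and $B=B_1+(B_2+B_3)\,dt$, where $B_1(z)=dz$ carries the stochastic differential, the corrector terms $C_4,C_5$ are inserted to close the algebra created by $A_i^2=1+\tfrac{h^2}{4}D_i^2$ and the spatially varying $\gamma_i$, and the leftover $M_h(z)$ is estimated separately (Lemma \ref{lem:estimate:termsMh}) and absorbed through Young's inequality in \eqref{eq:Estirf}. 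These are variants of the same computation, and your account of the It\^o corrections --- the quadratic-variation terms $|dz|^2=|rg|^2\,dt$ producing the $s^2|g|^2$ contribution, and the endpoint evaluations producing the $h^{-2}$ terms at $t=0,T$ --- is consistent with Lemmas \ref{lem:dz}--\ref{lem:dt}.

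There is, however, one concrete gap. After the cross-product step, the localized term generated by the region where $|\nabla\psi|$ may vanish is not $\int_0^T\int_{G_1\cap\mathcal{M}}s^3\varphi^3|z|^2$ alone: as in \eqref{eq:finalestimateinz} and Lemma \ref{lem:last}, it necessarily also contains the gradient piece $\sum_i\E\int_0^T\int_{G_1\cap\mathcal{M}_i^{\ast}}s\varphi e^{2s\varphi}|D_iw|^2$, because the positivity that fails on $G_1$ is that of the coefficient of $|D_iz|^2$ as well as that of $|z|^2$. Your sentence that the region where positivity may fail ``is absorbed into the observation term over $G_0\cap\mathcal{M}$'' silently assumes the local term is of order zero in $w$. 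To reach the stated inequality one still needs the Caccioppoli-type argument of Lemma \ref{lem:estimate:termDw}: multiply the equation by $s\varphi\,\xi^2e^{2s\theta\varphi}w$ with a cutoff $\xi$ supported in $G_0$ and equal to $1$ on $G_1$, apply It\^o's formula and the discrete integration by parts of Proposition \ref{prop:integralbyparts}, and trade the local gradient for $s^3\varphi^3|w|^2$ over $G_0$ at the price of an $|f|^2$ term and an $h^{-1}$ endpoint term. Without this step the right-hand side of \eqref{eq:inequalityCarleman} would have to carry an observation of $D_iw$, which is a strictly different (and, for the subsequent observability argument, weaker) statement.
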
 
\begin{remark}
Deriving Carleman estimates for continuous stochastic partial differential operators typically relies on a pointwise identity with an appropriate weight (see, e.g., \citep[Theorem 3.1]{NullControlability-2009}).
However, in the discrete case, we construct integral-form identities that strongly exploit the properties of the primal and dual meshes (i.e., $\mathcal{M}$ and $\mathcal{M}^{\ast}$, respectively) and their relationships with discrete operators, particularly as shown in Propositions \eqref{eq:int:ave} and \eqref{eq:int:dif}.
Therefore, obtaining a pointwise identity in the discrete setting presents new challenges. These identities tend to omit information about the mesh nodes, which may lead to potential issues in subsequent analyses.
This problem is especially interesting in the stochastic semidiscrete setting, as it could open the door to addressing new questions and to jointly studying the backward and forward problems, as done in \cite{NullControlability-2009}, among other references.
\end{remark}
\begin{proof} We follow a classical scheme (see \cite{fursikov-1996}) based on conjugating the original operator with a well-chosen exponential function. For the sake of presentation, we split the proof into three steps: First, we write the conjugate operator into two parts, and an additional term $R_{h}$. (see Section \ref{sub:conjugated}). Then we estimate the cross-inner product between these operators (see Section \ref{sub:croos-product}), and as a final stage, we return to the original variable (see Section \ref{sub:backVarible}).

%-----------
\subsection{Conjugated operator}\label{sub:conjugated}
%------------
For simplicity of notation we write $r\triangleq e^{s\varphi}$ and $\rho\triangleq r^{-1}$, where our weight function is defined as in \eqref{funcion-peso-2} and $s$ is given by \eqref{eq1}. The proof of some technical results can be found in Appendix \ref{tecgnicalstepcrossproduct}.  \\
For all $i=1,...,n$, let us consider the functions $\gamma_{i}$ such that $\mbox{reg}(\gamma)<\gamma_{0}$ and the following notation
\begin{equation*}
   \nabla_{\gamma}f\triangleq(\sqrt{\gamma_1}D_1 f_1,\cdots,\sqrt{\gamma_n}D_nf_n)\quad\text{and}\,\quad \Delta_{\gamma}f\triangleq\sum_{i=1}^{n}\gamma_i \partial_{x_i}^{2}f. 
\end{equation*}
\par Let $\displaystyle \mathcal{P}(w)\triangleq\,dw+\sum_{i=1}^{n}D_{i}\left(\gamma_iD_{i}(w)\right)dt=fdt+gdB(t)$. By regarding the weight function $\rho$, the first step is to consider the change of variable $w=\rho z$, and then our task is to split the conjugate operator 
$\displaystyle r\mathcal{P}(\rho z)=rd(\rho z)+r\sum_{i=1}^{n}D_{i}\left(\gamma_{i}D_{i}(\rho z)\right)dt$
into simple terms that we will estimate separately. Indeed, using Ito's formula  (cf. \citep[Theorem 4.18]{klebaner2012introduction}) and noticing that $r\rho=1$, we obtain $ rd(\rho z)=dz+r\partial_t(\rho) z\,dt.$
In turn, by \eqref{eq:difference:product} from Proposition \ref{pro:product}, we have\\
$
    D_{i}\left( \gamma_{i}D_{i}(\rho z)\right)
    =D_{i}(\gamma_{i}D_{i}\rho)A_{i}^{2}z+A_{i}(\gamma_{i}D_{i}\rho)D_{i}A_{i}z+D_{i}A_{i}\rho A_{i}(\gamma_{i}D_{i}z)+A_{i}^{2}\rho D_{i}(\gamma_{i}D_{i}z).
$
Then, applying Proposition \ref{pro:product} and using that $A_{i}\gamma_{i}=\gamma_{i}+h\mathcal{O}(1)$, we have
\begin{align*}
    A_{i}(\gamma_{i}D_{i}z)&=A_{i}\gamma_{i}A_{i}D_{i}z+\frac{h^2}{4}D_{i}\gamma_{i}D_{i}^{2}z=\gamma_i\, A_iD_iz+\frac{h^2}{4}D_i\gamma_i\,D^2_iz+h\mathcal{O}(1)\,A_iD_iz.
 \end{align*} Similarly, we get
 $A_{i}(\gamma_{i}D_{i}\rho)=\gamma_i\, A_iD_i\rho+\frac{h^2}{4}D_i\gamma_i\,D_i^2\rho+h\mathcal{O}(1)\,A_iD_i\rho$ and $D_{i}(\gamma_{i}D_{i}\rho)= 
D_i\gamma_i\,A_iD_i\rho + \gamma_i\, D_i^2\rho+h\mathcal{O}(1)\,D_i^2\rho$.
Thus, the conjugate operator can be written as
 \begin{equation}\label{eq:conjugate}
r\mathcal{P}(\rho z)-R_{h}(z)\,dt=B_1(z)+B_2(z)\,dt+C_1(z)\,dt+C_2(z)\,dt+C_3(z)\,dt,
\end{equation}
where $\displaystyle C_{1}(z)\triangleq \sum_{i=1}^{n}rA_{i}^{2}\rho\,D_{i}(\gamma_{i}D_{i}z)$, $\displaystyle C_{2}(z)\triangleq \sum_{i=1}^{n}\gamma_{i}rD_{i}^{2}\rho A_{i}^{2}z$, $C_{3}(z)\triangleq r\partial_t(\rho) z$, \break $\displaystyle B_{1}(z)\triangleq dz$, $B_2(z)\triangleq\,2\sum_{i=1}^{n}\gamma_{i}rD_{i}A_{i}\rho D_{i}A_{i}z$
and
\begin{equation*}
    \begin{split}
        R_{h}(z)\triangleq\,&\sum_{i=1}^{n}\left(h\mathcal{O}(1)rD_{i}^{2}\rho +D_{i}\gamma_{i}rA_{i}D_{i}\rho\right)A_{i}^{2}z+\sum_{i=1}^{n}h\mathcal{O}(1)rD_{i}A_{i}\rho D_{i}A_{i}z\\
        &+\sum_{i=1}^{n}\frac{h^{2}}{4}D_{i}\gamma_{i}rD_{i}^{2}\rho D_{i}A_{i}z+\sum_{i=1}^{n}\frac{h^{2}}{4}D_{i}\gamma_i \,rD_{i}A_{i}\rho D_{i}^{2}z.
    \end{split} 
\end{equation*}
 Therefore, adding the terms $\displaystyle C_{4}(z)\triangleq\,\frac{h^2}{4}\sum_{i=1}^{n}D_i(\gamma_i D_i(rD_i^2\rho)A_iz)$, $\displaystyle C_5(z)\triangleq\,\frac{h^2}{4}\sum_{i=1}^{n}D_i(D_i(\gamma_irD^2_i\rho)A_iz)$, and $B_3(z)\triangleq\,-2s(\Delta_{\gamma}\varphi)\, z$
to \eqref{eq:conjugate} we have the following identity in $Q$
\begin{equation}\label{eq:opconjugate}
    r\mathcal{P}(\rho z)+M_h (z)dt=C(z)\,dt+B(z),
\end{equation}
where $C(z)\triangleq C_{1}(z)+C_{2}(z)+C_3(z)+C_4(z)+C_5(z)$, $B(z)\triangleq B_1(z)+(B_{2}(z)+B_3(z))
\,dt$ and \break $M_{h}(z)\triangleq C_4(z)+C_5(z)+B_3(z)-R_hz$.

Thus, our next task is to estimate the cross-product. \begin{equation}\label{conmu}
    2\E \int_Q C(z)B(z) =2\E \sum_{i=1}^{5}\sum_{j=1}^{3}\int_Q C_{i}(z)B_{j}(z)\triangleq\,\sum_{i=1}^{5}\sum_{j=1}^{3}I_{ij}. 
\end{equation}

and to bound the left-hand side of \eqref{eq:opconjugate} with respect to the term $M_h z$. In fact,
we notice that from \eqref{eq:opconjugate}, we have \eqref{eq:opconjugate}.
\begin{equation}\label{eq:estimationP(q)}
    2r\mathcal{P}(\rho z)\,C(z)+2M_h(z)\,C(z)\,dt=2|C(z)|^2dt+2C(z)\,B(z).
\end{equation}
Hence, combining the definition of $r\mathcal{P}(\rho z)$ with \eqref{eq:estimationP(q)}, it follows that
\begin{equation*}
    \E \int_Q 2rf\,C(z)\,dt+2\E\int_{Q}
    M_h(z)\,C(z)\,dt=2\E \int_Q |C(z)|^2dt+2\E \int_{Q}C(z)B(z).
\end{equation*}
By using the Young inequality, we notice that
\begin{equation}\label{eq:Estirf}
    \E \int_0^T |rf|^2\,dt+\E \int_{Q}|M_{h}(z)|^2\,dt \geq 2\E\int_{Q} C(z)\,B(z).
\end{equation}

%%%%%%%%%%%%%%%%%%%%%%%%%%%%%%%%%%%%%%%%%%%%%%%%%%%%%%%%%%%%%%%%%%%%%%%%%%%%%%%%%%%%%%%%%%%%%%%%%%%%%%%%%%%%%%%%%%%%%%%%%%%%%%%%%%%%%%%%%%%%%%%%%%%%%%%%%%%%

The next step is to provide an estimate for the right-hand side of \eqref{eq:Estirf}.
%Applying Young's inequality and the triangular inequality, we can bound $r\mathcal{P}(q)$ in $Q$ as follows.
\begin{remark}
    The initial breakdown of the stochastic differential parabolic operator $\gamma$ plays an important role, particularly in the terms $A_i(\gamma_i D_i z)$ and $D_i(\gamma_i D_i z)$. This leads to additional terms that are included in the component $R_h (z)$ due to the product rule applied to the difference and averaging operators. Furthermore, for the same reason, two extra terms referred to as "correction terms" are added during the calculation of the product $C(z)B(z)$, which increase the number of terms in the decomposition and result in more product computations. On the other hand, including the diffusion coefficient $\gamma$ makes it possible to study operators with discontinuous or degenerate diffusion in a discrete space setting, which could be explored further in future research.
\end{remark}

\subsection{An estimate for the cross-product}\label{sub:croos-product}
%%%%%%%%%%%%%%%%%%%%%%%%%%%%
For the product \eqref{conmu}, we will distinguish the terms into three groups: those involving differential $dz$, those involving additional terms, and those involving the differential $dt$. We will obtain a result for each case, which will be presented in the following. For the reader's convenience, the respective proofs are provided in the appendix section \ref{tecgnicalstepcrossproduct}.

We begin by estimating the terms \( I_{11}, I_{21}, \) and \( I_{31} \), which incorporate the differential term \( dz \). This estimation will utilize tools from stochastic calculus, specifically Itô's formulas as presented in Theorems 4.17 and 4.18 of \cite{klebaner2012introduction}, employing the most suitable version for our purposes. The estimation procedure follows the framework outlined in \cite{zhao:2024}. However, we focus on the $n$-dimensional case with $n >1$, which adds complexity because the function $\gamma$ depends on space. This spatial dependence affects estimates involving differential and average operators, such as those used to derive the estimate \eqref{funtiongammaimport} and others. In particular, this function generates two additional terms called 'corrector terms': $C_4 z$ and $C_5 z$, which appear in $I_{11}$ and $I_{21}$, respectively. In this case, we obtain the following lemma combining the estimates of section \ref{subsec:dz}.
\begin{lemma}\label{lem:dz}.
(\textit{Terms involving the differential $dz$.}) For $\tau h (\max_{[0,T]}{\theta})\leq 1$, we have
\begin{equation*}
    \sum_{i=1}^{5}I_{i1}\geq\,\sum_{i=1}^{n} \E\int_{Q_i^{\ast}}\gamma_i|D_i(dz)|^2-X_{1}-Y_{1}
\end{equation*}
where
\begin{equation*}
\begin{split}
    &X_{1}\triangleq\,\sum_{i=1}^{n}\left(\E\int_{Q_{i}^{\ast}}\mathcal{O}_{\lambda}((sh)^2)\,|D_i(dz)|^2+\E\int_{Q_{i}^{\ast}}T\theta(t)\mathcal{O}_{\lambda}((sh)^2)\,|D_iz|^2\,dt\right)\\
    &+\E\int_{Q}(\mathcal{O}_{\lambda}(s^2+(sh)^2)+T\theta^2\mathcal{O}(1))|\,|dz|^2+\E\int_{Q} T(\theta\mathcal{O}_{\lambda}(s^2+(sh)^2)+\theta^3\mathcal{O}(1))\,|z|^2dt
\end{split}
\end{equation*}
and
\begin{equation*}
\begin{split}
    Y_{1}\triangleq\,&\sum_{i=1}^{n}\E\left. \int_{\mathcal{M}_i^*}\mathcal{O}_{\lambda}(1+(sh)^2)\,|D_iz|^2\right|_0^T+\E\left.\int_{\mathcal{M}}(\mathcal{O}_{\lambda}(s^2+(sh)^2)+T\theta^2\mathcal{O}(1))\,|z|^2\right|_0^T
\end{split}
\end{equation*}
\end{lemma}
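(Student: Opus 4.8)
The plan is to treat the five contributions $I_{11},\ldots,I_{51}$ separately, each being of the form $2\E\int_Q C_i(z)\,dz$, and to recognise that the sole positive term $\sum_{i}\E\int_{Q_i^*}\gamma_i|D_i(dz)|^2$ is precisely the Itô quadratic-variation correction produced by $I_{11}$. Accordingly, I would start from $I_{11}=2\E\int_Q\sum_i rA_i^2\rho\,D_i(\gamma_iD_iz)\,dz$, transfer the operator $D_i(\gamma_iD_i\cdot)$ onto the factor $rA_i^2\rho\,dz$ via the discrete integration-by-parts formula \eqref{eq:int:dif}, and expand the resulting $D_i(rA_i^2\rho\,dz)$ with the product rule \eqref{eq:difference:product}. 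Since $dw=0$ on $\partial\mathcal M$ for every $t$ (hence $z=dz=0$ there), all spatial boundary contributions from \eqref{eq:int:dif} drop out and only interior sums over $\mathcal M_i^*$ survive.

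The crucial step is to isolate the main interior term $-\int_{\mathcal M_i^*}\gamma_i\,A_i(rA_i^2\rho)\,D_iz\,D_i(dz)$ and apply Itô's formula to the product $D_iz\cdot D_i(dz)=\tfrac12\,d(|D_iz|^2)-\tfrac12|D_i(dz)|^2$ (Theorem 4.18 of \cite{klebaner2012introduction}). Using \eqref{eq:averengeanddifference} one computes $rA_i^2\rho=1+\mathcal O_\lambda((sh)^2)$, hence $A_i(rA_i^2\rho)=1+\mathcal O_\lambda((sh)^2)$; the $-\tfrac12|D_i(dz)|^2$ piece, multiplied by the outer factor $2$, then yields the leading term $\sum_i\E\int_{Q_i^*}\gamma_i|D_i(dz)|^2$ plus the remainder $\mathcal O_\lambda((sh)^2)|D_i(dz)|^2$ placed in $X_1$. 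The $\tfrac12\,d(|D_iz|^2)$ piece integrates in time to the boundary term $\E\int_{\mathcal M_i^*}\mathcal O_\lambda(1+(sh)^2)|D_iz|^2\big|_0^T$ of $Y_1$, and, since $\gamma$ is time-independent while differentiating the $(sh)^2$-weight via \eqref{eq:theta'} produces a factor $T\theta$, the remaining bulk contribution is exactly the $T\theta\,\mathcal O_\lambda((sh)^2)|D_iz|^2$ term of $X_1$. The complementary factor $D_i(rA_i^2\rho)\,A_i(dz)$ and the corrector terms $I_{41},I_{51}$, all carrying an explicit $h^2$, are handled identically and contribute only $(sh)^2$-weighted errors.

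For $I_{21}=2\E\int_Q\sum_i\gamma_i rD_i^2\rho\,A_i^2z\,dz$ and $I_{31}=2\E\int_Q r\partial_t\rho\,z\,dz$ the mechanism is the same, but one applies Itô directly to $z\,dz=\tfrac12\,d(|z|^2)-\tfrac12|dz|^2$ (after trading $A_i^2z$ for $z$ plus an $h^2$ correction via \eqref{eq:averengeanddifference}, and using \eqref{eq:int:ave} where needed). Here the weight $rD_i^2\rho$ produces the factor $s^2$ and $r\partial_t\rho$ the factor $\tau\theta^2\sim T\theta^2$ through \eqref{eq:theta'}; thus the $-\tfrac12|dz|^2$ pieces give the bulk terms $(\mathcal O_\lambda(s^2+(sh)^2)+T\theta^2\mathcal O(1))|dz|^2$ of $X_1$, time-differentiation of the weights contributes the $T(\theta\,\mathcal O_\lambda(s^2+(sh)^2)+\theta^3\mathcal O(1))|z|^2$ bulk terms, and the $d(|z|^2)$ pieces integrate to the time-boundary terms of $Y_1$. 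Throughout, the standing hypothesis $\tau h(\max_{[0,T]}\theta)\le1$, i.e. $sh\le1$, is what allows every $h\mathcal O(1)$ and $h^2\mathcal O(1)$ factor to be absorbed into an $\mathcal O_\lambda((sh)^2)$ (or lower) weight.

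I expect the main obstacle to be the bookkeeping forced by the spatial dependence of $\gamma$ and the non-commutativity of $A_i$ and $D_i$: the product rules \eqref{eq:difference:product}, \eqref{eq:average:product} together with $A_i\gamma_i=\gamma_i+h\mathcal O(1)$ generate a cascade of lower-order terms, and one must verify—using the weight estimates deferred to Appendix \ref{tecgnicalstepcrossproduct} (sharp bounds on $rD_i^2\rho$, $D_iA_i\rho$, $A_i(rA_i^2\rho)$ and their discrete derivatives)—that each term lands in exactly one slot of $X_1$ or $Y_1$ with the advertised power of $s$, $sh$, $\theta$, or $T$. The genuinely stochastic subtlety is keeping the Itô correction terms $|D_i(dz)|^2$ and $|dz|^2$ cleanly separated from the drift-type contributions, since it is precisely one such correction that supplies the only gain appearing on the left-hand side.
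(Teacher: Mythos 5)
Your outline reproduces the paper's main mechanism correctly: split $rA_i^2\rho=1+\tfrac{h^2}{4}rD_i^2\rho$ via \eqref{eq:averengeanddifference}, integrate by parts so that the leading piece of $I_{11}$ becomes $-2\E\int_{Q_i^*}\gamma_i D_iz\,D_i(dz)$, and extract the positive term $\E\int_{Q_i^*}\gamma_i|D_i(dz)|^2$ as the It\^o quadratic-variation correction, with the time-boundary terms going to $Y_1$; the treatment of $I_{21}$ (via $z\,dz$) and $I_{31}$ (with $r\partial_t\rho$ producing $T\theta^2$ and $T\theta^3$ weights) also matches the paper. However, there is a genuine gap in how you dispose of the cross terms of the form $h^2\,\gamma_i D_i(rD_i^2\rho)\,D_iz\,A_i(dz)$ (and the analogous one in $I_{21}$). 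You assert that these, "carrying an explicit $h^2$, are handled identically and contribute only $(sh)^2$-weighted errors," but a product of a process with a stochastic differential cannot be absorbed by Young's inequality into $|D_iz|^2\,dt$ and $|A_i(dz)|^2$ slots: one must first process it with It\^o's product rule, $2D_iz\,A_i(dz)=d\bigl(D_i(z^2)\bigr)-2A_iz\,D_i(dz)-2A_i(dz)D_i(dz)$, and the middle piece, after integration by parts, leaves a residual of the form $-\tfrac{h^2}{2}\E\int_Q D_i\bigl(\gamma_i D_i(rD_i^2\rho)A_iz\bigr)\,dz$, which is neither a quadratic-variation term, nor a $|z|^2\,dt$ term, nor a time-boundary term, and so fits into no slot of $X_1$ or $Y_1$.

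The paper closes this by an exact algebraic cancellation: that residual equals $-2\E\int_Q C_4(z)B_1(z)=-I_{41}$ (and its counterpart in $I_{21}$ equals $-I_{51}$), which is precisely why the correctors $C_4$ and $C_5$ were inserted into the conjugated operator and why the lemma is stated for $\sum_{i=1}^{5}I_{i1}$ rather than $\sum_{i=1}^{3}I_{i1}$. Your proposal never identifies this cancellation — it treats $I_{41}$ and $I_{51}$ as independent small terms — and trying instead to estimate the residual directly would force you to substitute the drift of $dz$ from the conjugated equation, reintroducing the very operators you are trying to bound. Absent the cancellation, the argument does not close.
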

%%%%%%%%%%%%%%%%%%%%%%%%%%%%%%%%%%%%%%%%%%%%%%%%%%%%%%%%%%%%%%%%%%%%%%%%%%%%%%%%%%%%%%%%%%%%%%%%%%%%%%%%%%%%%%%%%%%%%%%%%%%%%%%%%%%%%%%%%%%%%%%%%%%%%%%%%%%%%%%%%%%%%%%%%%%%%%%%%%%%%%%%
Our next step is to obtain an estimate for the terms involving the so-called ''correction terms'' from \eqref{conmu}. In particular, the terms $C_4z$ and $C_5z$ are inspired by work in \cite{WZ:2024}, where they appear in the product with the term $B_1z$. Additionally, the term $B_3z$ naturally arises when calculating the Carleman inequality in the deterministic case, as demonstrated in \cite{boyer-2010-elliptic}. Therefore, the estimation of the terms $I_{13}$ and $I_{23}$ will be omitted because it does not differ from the deterministic case \cite[see Lemma 3.3 and Lemma 3.6]{boyer-2014}. The estimates presented in Lemma \ref{lem:AdditionalsTerms} are obtained by combining the estimates \eqref{eq:estimatefinalI_42}-\eqref{eq:estimatefinalI53}, $I_{13}$ and $I_{23}$.

\begin{lemma}\label{lem:AdditionalsTerms}
    (\textit{product of the additional terms.}) For $\tau h (\max_{[0,T]}{\theta})\leq 1$, we obtain
    \begin{align*}
        \sum_{i=4}^{5}I_{i2}+\sum_{i=1}^{5}I_{i3}\geq\, \sum_{i=1}^{n}\E\int_{Q_{i}^{\ast}}4s\lambda^2\varphi\gamma_i&|\nabla_{\gamma}\psi|^2\,|D_iz|^2\,dt\\
        &-\E\int_{Q}4s^3\lambda^4\varphi^3|\nabla_{\gamma}\psi|^4\,|z|^2\,dt-X_2
    \end{align*}
where
\begin{equation*}
    \begin{split}
        &X_2\triangleq\, \sum_{i=1}^{n} \E\int_{Q_i^{\ast}}s|\mathcal{O}_{\lambda}((sh)^2)|\,|D_iz|^2\,dt+\sum_{i=1}^{n} \E\int_{Q_i^{\ast}}s\mathcal{O}_{\lambda}(sh)\,|D_iz|^2\,dt \\
        &+\sum_{i=1}^{n}\E\int_{Q_i^{\ast}}(s\lambda\mathcal{O}(1)+\mathcal{O}_{\lambda}(sh)\,|D_iz|^2\,dt+\E\int_{Q} s|\mathcal{O}_{\lambda}((sh)^2)|\,|z|^2\,dt\\
        &+\E\int_{Q}s^2\mathcal{O}_{\lambda}(1)\,|z|^2\,dt+\E\int_{Q}(s^3\lambda^3\varphi^3\mathcal{O}(1)+s^2\mathcal{O}_{\lambda}(1)+s^3\mathcal{O}_{\lambda}(sh))\,|z|^2\,dt\\
        &+\E\int_{Q} (Cs\lambda^2T\theta^2\varphi|\nabla_{\gamma}\psi|^2+s\lambda T\theta^2\varphi\mathcal{O}(1))\,|z|^2\,dt-\E\int_{Q}s\mathcal{O}_{\lambda}((sh)^2)\,|z|^2\,dt.
    \end{split}
\end{equation*}
\end{lemma}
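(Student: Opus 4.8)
The plan is to split the seven products appearing in $\sum_{i=4}^{5}I_{i2}+\sum_{i=1}^{5}I_{i3}$ into two groups according to their role. The products $I_{13}=2\E\int_{Q}C_{1}(z)B_{3}(z)$ and $I_{23}=2\E\int_{Q}C_{2}(z)B_{3}(z)$ carry no stochastic differential and, to leading order, no $h^{2}$-correction, so their treatment is purely spatial: writing $rA_{i}^{2}\rho=1+\tfrac{h^{2}}{4}rD_{i}^{2}\rho$ via \eqref{eq:averengeanddifference} one gets $C_{1}(z)=\sum_{i}D_{i}(\gamma_{i}D_{i}z)+(\text{terms of size }(sh)^{2})$, while the weight asymptotics of Appendix \ref{sec:weight:function} give $rD_{i}^{2}\rho=s^{2}(D_{i}\varphi)^{2}+\mathcal{O}_{\lambda}(s)$, so that $C_{2}(z)=\sum_{i}\gamma_{i}s^{2}(D_{i}\varphi)^{2}A_{i}^{2}z+\cdots$. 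Substituting $B_{3}(z)=-2s(\Delta_{\gamma}\varphi)z$ and integrating by parts through Proposition \ref{prop:integralbyparts} (the boundary contributions vanishing because $z=0$ on $\partial\mathcal{M}$), $I_{13}$ produces the dominant gradient term $\sum_{i}\E\int_{Q_{i}^{\ast}}4s\lambda^{2}\varphi\gamma_{i}|\nabla_{\gamma}\psi|^{2}|D_{i}z|^{2}\,dt$ and $I_{23}$ produces the dominant potential term $-\E\int_{Q}4s^{3}\lambda^{4}\varphi^{3}|\nabla_{\gamma}\psi|^{4}|z|^{2}\,dt$, exactly as in the deterministic semi-discrete computation. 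I would therefore quote these from \cite[Lemma 3.3 and Lemma 3.6]{boyer-2014} rather than redo them, leaving every lower-order remainder in $X_{2}$.

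The genuinely new work is the estimation of the correction products $I_{42},I_{52}$ (against $B_{2}$) and $I_{33},I_{43},I_{53}$ (against $B_{3}$), carried out in the estimates \eqref{eq:estimatefinalI_42}--\eqref{eq:estimatefinalI53} of Appendix \ref{tecgnicalstepcrossproduct}. For each of these I would proceed in three moves. First, since $C_{4}(z)$ and $C_{5}(z)$ are discrete divergences of the form $\tfrac{h^{2}}{4}\sum_{i}D_{i}(\,\cdot\,A_{i}z)$, I would use \eqref{eq:int:dif} to transfer the outer $D_{i}$ onto the partner factor $B_{2}(z)$ or $B_{3}(z)$, again discarding boundary terms by $z=0$ on $\partial\mathcal{M}$. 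Second, I would expand every $D_{i}$ and $A_{i}$ of a product by Proposition \ref{pro:product}, which is where the spatial dependence of $\gamma$ enters through the extra factors $D_{i}\gamma_{i}$ and $A_{i}\gamma_{i}=\gamma_{i}+h\mathcal{O}(1)$. Third, I would insert the weight-function expansions of $rD_{i}^{2}\rho$ and $rD_{i}A_{i}\rho$ from Appendix \ref{sec:weight:function}, so that the $h^{2}$ prefactor multiplying second differences of $\rho$ (which grow like $s^{2}$) collapses to factors $(sh)^{2}$ and $sh$; under the running hypothesis $\tau h(\max_{[0,T]}\theta)\leq1$, i.e. $sh$ small, these are absorbable, which is why they surface in $X_{2}$ as the terms $s|\mathcal{O}_{\lambda}((sh)^{2})|$, $s\mathcal{O}_{\lambda}(sh)$ and $s^{3}\mathcal{O}_{\lambda}(sh)$.

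The remaining purely $s$-weighted and time-weighted remainders are collected in $X_{2}$ as well. In particular, the terms $Cs\lambda^{2}T\theta^{2}\varphi|\nabla_{\gamma}\psi|^{2}|z|^{2}$ and $s\lambda T\theta^{2}\varphi\mathcal{O}(1)|z|^{2}$ originate from $C_{3}(z)=r\partial_{t}(\rho)z$ inside $I_{33}$: by \eqref{eq:theta'} the time derivative $\partial_{t}\rho$ carries $\theta'=2(t-T/2)\theta^{2}$, producing the factor $T\theta^{2}$. Assembling the dominant contributions of $I_{13}$ and $I_{23}$ with all of the above remainders grouped into $X_{2}$ then yields the asserted inequality.

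The hard part will be the bookkeeping in the second step. Because $C_{4}$ and $C_{5}$ are divergences of quantities built from $D_{i}^{2}\rho$, the naive size of their products is $O((sh)^{2})$, but the mismatch between the primal mesh $\mathcal{M}$ and the dual meshes $\mathcal{M}_{i}^{\ast}$, together with the coefficient differences $D_{i}\gamma_{i}$, generates numerous intermediate ``half-order'' terms of size $O(sh)$. Ensuring that every one of these carries at least one genuine power of $sh$—so that it lands in $X_{2}$ and is later dominated by the gradient and potential terms of the full Carleman estimate—is the delicate point, and it is precisely here that the spatially varying diffusion $\gamma$ forces the extra care absent from the constant-coefficient setting of \cite{zhao:2024}.
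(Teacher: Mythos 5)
Your overall architecture matches the paper's: $I_{13}$ and $I_{23}$ are indeed imported from the deterministic computation of \cite{boyer-2014} and supply the two leading terms, $I_{33}$ is handled exactly as you describe via $|\theta_t|\leq CT\theta^2$ and $\Delta_\gamma\varphi=\lambda^2\varphi|\nabla_\gamma\psi|^2+\lambda\varphi\,\mathcal{O}(1)$, and the remaining products must be reduced to $(sh)$-small remainders. The problem is your first move for the correction products. For $I_{43}$ and $I_{53}$ transferring the outer $D_i$ onto $B_3(z)=-2s(\Delta_\gamma\varphi)z$ would be tolerable, but for $I_{42}$ and $I_{52}$ the partner is $B_2(z)=2\sum_j\gamma_j rD_jA_j\rho\,D_jA_jz$, and integrating by parts via \eqref{eq:int:dif} puts a difference operator on it, producing second differences $D_iD_jA_jz$ and $D_i^2A_iz$. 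After Young's inequality the resulting remainders are of the type $h\,\mathcal{O}_{\lambda}((sh)^2)\,|D^2_{ij}z|^2$ or $s^2\mathcal{O}_{\lambda}(sh)\,|D_jz|^2$; neither appears in, nor is dominated by, the stated $X_2$, whose $|D_iz|^2$-coefficients are at most of order $s$ and which contains no second differences at all (such terms live in $X_3$ and $X_4$ and require Lemma \ref{lem:inequalityforDz} to handle). Moreover the boundary term of that integration by parts does \emph{not} vanish: it involves $t_r^i(A_iz)$ and $D_jA_jz$ on $\partial_i\mathcal{M}$, which survive even though $z=0$ there. So your route does not prove the lemma as stated.

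The paper avoids both issues by never differentiating $B_2(z)$ or $B_3(z)$: it expands the divergence in place with the Leibniz rule \eqref{eq:difference:product}, writing $D_i(\gamma_iD_i(rD_i^2\rho)A_iz)=D_i(\gamma_iD_i(rD_i^2\rho))\,A_i^2z+A_i(\gamma_iD_i(rD_i^2\rho))\,D_iA_iz$, so the extra difference falls on the weight, where it costs nothing (both coefficients stay $s^3\mathcal{O}_\lambda(1)$ by Theorem \ref{theo:weight:estimates} and Lemma \ref{lem:derivative:wrt:x}, hence $s\,\mathcal{O}_\lambda((sh)^2)$ after the $h^2$ prefactor), and the only $z$-factors produced are $A_i^2z$ and $D_iA_iz$. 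The second ingredient missing from your outline is the reduction of these averaged quantities to the terms actually listed in $X_2$: the paper combines $A_i(|u|^2)\geq|A_iu|^2$ \eqref{eq:promedioinequality} with the average-operator integration by parts \eqref{eq:int:ave} and the sign of the trace terms to obtain $\E\int_Q|A_i^2z|^2\,dt\leq\E\int_Q|z|^2\,dt$ and $\E\int_Q|A_iD_iz|^2\,dt\leq\E\int_{Q_i^\ast}|D_iz|^2\,dt$, which is what places the remainders on $\mathcal{M}$ and $\mathcal{M}_i^\ast$ as in $X_2$. With these two changes your plan reproduces \eqref{eq:estimatefinalI_42}--\eqref{eq:estimatefinalI53} and hence the lemma; incidentally, the ``half-order'' $D_i\gamma_i$ terms you single out as the delicate point are not the real danger, since they only perturb the $s^3\mathcal{O}_\lambda(1)$ weight estimates and not the structure of the remainder.
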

%%%%%%%%%%%%%%%%%%%%%%%%%%%%%%%%%%%%%%%%%%%%%%%%%%%%%%%%%%%%%%%%%%%%%%%%%%%%%%%%%%%%%%%%%%%%%%%%%%%%%%%%%%%%%%%%%%%%%%%%%%%%%%%%%%%%%%%%%%%%%%%%%%%%%%%%%%%%%%%%%%%%%%%%%%%%
Finally, the terms \(I_{12}, I_{22},\) and \(I_{32}\) are similar to those of the deterministic case as discussed in \cite{boyer-2014}, since the temporal variable does not play an important role. In view of this, we do not provide a detailed proof of the estimation of the following lemma. This is not necessary because it only differs from our notation from the one used in \cite{boyer-2014}.
\begin{lemma}\label{lem:dt}
(\textit{Terms involving the differential $dt$.}) For $\tau h (\max_{[0,T]}{\theta})\leq 1$, we obtain
\begin{equation*} 
    \sum_{i=1}^{3}I_{i2}\geq\,\E\int_{Q} 6s^3\lambda^4\varphi^3|\nabla_{\gamma}\psi|^4\,|z|^2\,dt-\sum_{i=1}^{n}\E\int_{Q_{i}^{\ast}}2s\lambda^2\varphi \gamma_{i}|\nabla_{\gamma}\psi|^2\,|D_iz|^2\,dt-X_3-Y_2
\end{equation*}
where
\begin{equation*}
    \begin{split}
        &X_{3}\triangleq\,\sum_{i=1}^{n}\E\int_{Q}|s\lambda\varphi\mathcal{O}(1)+\mathcal{O}_{\lambda}(sh)+s\mathcal{O}_{\lambda}(sh)+s\mathcal{O}_{\lambda}((sh)^{2})|\,|D_{i}A_{i}z|^{2}\,dt\\
    &+\sum_{\substack{i,j=1\\ i\ne j}}^{n}\E\int_{Q_{ij}^{*}}\left|h\lambda\mathcal{O}(sh)+h\mathcal{O}_{\lambda}((sh)^{2})\right|\,|D_{ij}^{2}z|^{2}\,dt\\
    &+\sum_{i=1}^{n}\E\int_{Q^{\ast}_{i}}|s\lambda\varphi\mathcal{O}(1)+s\mathcal{O}_{\lambda}(sh)+h\mathcal{O}_{\lambda}(sh)+s\mathcal{O}_{\lambda}((sh)^{2})+\mathcal{O}_{\lambda}((sh)^2)|\, |D_{i}z|^{2}\\
    &+\E\int_{Q}(s^2\lambda^3\varphi^2\mathcal{O}(1)+s^2\mathcal{O}_{\lambda}(1)+s^2T\theta\mathcal{O}_{\lambda}(1)+s^3\mathcal{O}_{\lambda}((sh)^2))|z|^2\\
    &+\sum_{i=1}^{n}\E\int_{Q}h^{2}|\mathcal{O}_{\lambda}(sh)|\,|D_{i}^{2}z|^{2}\,dt+\sum_{i=1}^{n}\E\int_{Q}\left|h\lambda\mathcal{O}(sh)+h\mathcal{O}_{\lambda}((sh)^{2})\right|\,|D_{i}^{2}z|^{2}\,dt
    \end{split}
\end{equation*}
and
\begin{equation*}
    \begin{split}
        Y_{2}&\triangleq\,\sum_{i=1}^{n}\E\int_{\partial_{i}Q}\left(-2s\lambda\varphi(\gamma_i)^2\partial_{i}\psi+s\mathcal{O}_{\lambda}(sh)+h\mathcal{O}_{\lambda}(sh)\right)t_{r}^{i}(|D_{i}z|^{2})\nu_{i}\,dt\\
        &+\sum_{i=1}^{n}\E\int_{\partial_iQ}\mathcal{O}_{\lambda}(sh)t_r^{i}(|D_iz|^2)\,dt+\sum_{i,j=1}^{n}E\int_{\partial_{i}Q}s\mathcal{O}_{\lambda}((sh)^2)\,t_{r}^{i}(|D_{i}z|^{2})\nu_{i}\,dt.
    \end{split}
\end{equation*}
\end{lemma}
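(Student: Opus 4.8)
The plan is to establish Lemma~\ref{lem:dt} by reducing $\sum_{i=1}^{3}I_{i2}=2\E\int_Q\bigl(C_1(z)+C_2(z)+C_3(z)\bigr)B_2(z)$ to the deterministic semi-discrete cross-product computation of \cite{boyer-2014}, since $B_2$, $C_1$ and $C_2$ involve only the spatial difference and average operators while $C_3(z)=r\partial_t(\rho)z$ carries the only temporal dependence, which is deterministic and hence passes through the expectation untouched. First I would expand every discrete weight factor appearing in $C_1,C_2,C_3,B_2$ around its continuous value, using the weight estimates of Appendix~\ref{sec:weight:function} together with the product rules \eqref{eq:difference:product}--\eqref{eq:averengeanddifference}: schematically $rA_i^2\rho=1+\mathcal{O}_\lambda((sh)^2)$, $rD_iA_i\rho$ is $-s\lambda\varphi\,\partial_i\psi$ to leading order, $rD_i^2\rho$ is $s^2\lambda^2\varphi^2(\partial_i\psi)^2$ to leading order, and $r\partial_t(\rho)=-\tau\theta'\varphi$, the remainders being precisely the discrete symbols $\mathcal{O}_\lambda(sh)$, $\mathcal{O}_\lambda((sh)^2)$ and the time factors $T\theta$, $T\theta^2$ that will populate $X_3$.

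Next I would substitute these expansions into the three products and repeatedly apply the discrete integration by parts of Proposition~\ref{prop:integralbyparts} so that difference operators fall on the weight (each such transfer producing one extra power of $\lambda|\nabla_\gamma\psi|^2$) and so that the products symmetrize. The dominant positive term $6s^3\lambda^4\varphi^3|\nabla_\gamma\psi|^4|z|^2$ arises from pairing the second-order weight factor $s^2\lambda^2$ in $C_2$ with the first-order factor $s\lambda$ in $B_2$ followed by one integration by parts; the product $C_1B_2$, after moving a $D_i$ onto the weight, yields the gradient term $-2s\lambda^2\varphi\gamma_i|\nabla_\gamma\psi|^2|D_iz|^2$, while $C_3B_2$ contributes only $T\theta$-weighted terms of strictly lower order. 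Every application of Proposition~\ref{prop:integralbyparts} on the staggered mesh produces a boundary contribution whose leading part carries the factor $-2s\lambda\varphi(\gamma_i)^2\partial_i\psi$; these I would collect into $Y_2$, and all terms that are of lower order in $s$ or $\lambda$, that carry a factor $sh$ or $(sh)^2$, or that are mixed second differences $D_{ij}^2z$ with $i\ne j$, into $X_3$. The hypothesis $\tau h(\max_{[0,T]}\theta)\le1$ guarantees these remainders are genuinely subordinate.

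The main obstacle I anticipate is the discrete integration-by-parts bookkeeping on the staggered primal and dual meshes: each term must be tracked on the correct set ($\mathcal{M}$, $\mathcal{M}_i^\ast$, or $\mathcal{M}_{ij}^{\ast\ast}$), and each transfer of a difference operator must produce the correct boundary term with the right normal $\nu_i$ and trace $t_r^i$, so that the leading coefficients come out exactly $6$ and $-2$; these are calibrated to cancel partially against the $-4$ and $+4$ of Lemma~\ref{lem:AdditionalsTerms}, leaving the net coercive Carleman terms. A second delicate point is verifying that the discrete commutator errors created when average and difference operators fail to commute are genuinely $\mathcal{O}(sh)$-small rather than $\mathcal{O}(1)$, which is exactly where the semi-discrete estimate departs from the continuous one. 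Because the temporal variable plays no structural role once the expectation is inserted, after these spatial identities are in place the argument is a transcription of \cite[Lemmas~3.3 and~3.6]{boyer-2014}, which is why the detailed computation may be omitted.
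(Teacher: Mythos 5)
Your proposal is correct and follows essentially the same route as the paper, which itself omits the detailed computation for $I_{12},I_{22},I_{32}$ on the grounds that these terms involve only the differential $dt$, the temporal weight is deterministic, and so the estimate is a transcription of the deterministic semi-discrete cross-product computation in \cite{boyer-2014}. Your identification of the sources of the leading coefficients ($6s^3\lambda^4\varphi^3|\nabla_\gamma\psi|^4$ from $C_2B_2$, the $-2s\lambda^2\varphi\gamma_i|\nabla_\gamma\psi|^2|D_iz|^2$ term from $C_1B_2$, the boundary terms in $Y_2$ from the staggered integrations by parts, and the partial cancellation against the $-4$ and $+4$ of Lemma \ref{lem:AdditionalsTerms}) matches the structure the paper relies on.
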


%%%%%%%%%%%%%%%%%%%%%%%%%%%%%%%%%%%%%%%%%%%%%%%%%%%%%%%%%%%%%%%%%%%%%%%%%%%%%%%%%%%%%%%%%%%%%%%%%%%%%%%%%%%%%%%%%%%%%%%%%%%%%%%%%%%%%%%%%%%%%%%%%%%%%%%%%%%%%%%%%%%%%%%%%%%%%%%%%%%%%%%%%%%%%%%%%%%%%%%%%

%%%%%%%%%%%%%%%%%%%%%%%%%%%%%%%%%%%%%%%%%%%%%%%%%%%%%%%%%%%%%%%%%%%%%%%%%%%%%%%%%%%%%%%%%%%%%%%%%%%%%%%%%%%%%%%%%%%%%%%%%%%%%%%%%%%%%%
Then by Lemma \ref{lem:dz}-\ref{lem:dt}, from \eqref{conmu} we obtain, for $0<\tau h (\max_{[0,T]}{\theta})\leq \varepsilon_1(\lambda)$
\begin{equation}\label{eq:sumterms}
\begin{split}
    2\E \int_Q C(z)B(z) &\geq\,-\sum_{j=1}^{3}X_{j}-\sum_{j=1}^2 Y_j+\E\int_{Q}2s^{3}\lambda^{4}\varphi^{3}|\nabla_{\gamma}\psi|^{4}\,|z|^{2}\,dt\\
    &+ \sum_{i=1}^{n}\E\int_{Q_{i}^{\ast}}2s\lambda^{2}\varphi\gamma_i|\nabla_{\gamma}\psi|^{2}\,|D_{i}z|^{2}\,dt+\sum_{i=1}^{n} \E\int_{Q^{\ast}}\gamma_i |D_i(dz)|^2.
    \end{split}
\end{equation}
To give an estimate of the right-hand side of \eqref{eq:Estirf}, we need the following estimation of $M_hz$, see \eqref{eq:opconjugate}. The proof can be adapted from Lemma 4.2 in \cite{boyer-2010-1d-elliptic} and the estimation of $\Phi$ in \cite{zhao:2024}.
\begin{lemma}\label{lem:estimate:termsMh}
(\textit{Estimate of $M_hz$.)} For $\tau h (\max_{[0,T]}{\theta})\leq 1$, we have
\begin{equation*}
    \E\int_{Q}|M_h(z)|^2\,dt\leq \mathcal{O}_{\lambda}(1)\left(\E\int_{Q}s^2|z|^2\,dt+h^2\sum_{i=1}^{n}\int_{Q^{\ast}}s^2|D_iz|^2\,dt\right).
\end{equation*}
\end{lemma}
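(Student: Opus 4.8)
The plan is to observe first that $M_h(z)=C_4(z)+C_5(z)+B_3(z)-R_h(z)$ contains only $dt$-type (spatial) operators applied to $z$ and no stochastic differential, so the asserted bound is a purely deterministic $L^2_h$ estimate that holds for each fixed $(t,\omega)$ and is then integrated in time and against $\mathds{P}$ by Tonelli's theorem. In this sense the argument reduces to the deterministic template of \cite[Lemma 4.2]{boyer-2010-1d-elliptic} and the estimate of $\Phi$ in \cite{zhao:2024}, adapted to account for the spatial dependence of $\gamma$ and for the two correction terms $C_4,C_5$.

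First I would record the pointwise bounds on the conjugated discrete weights. Writing $r=e^{s\varphi}$ and $\rho=r^{-1}$, the key observation (the weight estimates of Appendix \ref{sec:weight:function}) is that, in the regime $sh\leq 1$, every conjugated discrete quantity $rA_i^{a}D_i^{b}\rho$ is controlled in $L^\infty_h$ by $\mathcal{O}_{\lambda}(s^{b})$ uniformly in $h$, and -- crucially -- applying a further discrete operator $A_i$ or $D_i$ to such a quantity does not raise the power of $s$, since $r$ and $\rho$ have essentially cancelled and only the $\varphi$-polynomial coefficients are differentiated. In particular $rD_i^{2}\rho=\mathcal{O}_{\lambda}(s^{2})$, $rA_iD_i\rho=\mathcal{O}_{\lambda}(s)$, $D_i(rD_i^{2}\rho)=\mathcal{O}_{\lambda}(s^{2})$, and $\Delta_{\gamma}\varphi=\mathcal{O}_{\lambda}(1)$.

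Next I would expand $M_h(z)$ term by term and bound each summand in $L^2_h$ by the triangle inequality, pulling the weight factor out in $L^\infty_h$ and estimating the remaining expression in $z$ using that the averaging and difference operators are $L^2_h$-bounded uniformly in $h$, so that $\|A_i^{2}z\|_{L^2_h}\lesssim\|z\|_{L^2_h}$, $\|D_iA_iz\|_{L^2_h}\lesssim\|D_iz\|_{L^2_h}$, together with the second-difference bound $\|hD_i^{2}z\|_{L^2_h}\lesssim\|D_iz\|_{L^2_h}$. For $B_3(z)=-2s(\Delta_{\gamma}\varphi)z$ one gets directly $|B_3(z)|^{2}=\mathcal{O}_{\lambda}(s^{2})|z|^{2}$. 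Each summand of $R_h(z)$ either carries a weight of order $\mathcal{O}_{\lambda}(s)$ against $A_i^{2}z$, yielding $\mathcal{O}_{\lambda}(s^{2})|z|^{2}$, or one or two explicit powers of $h$ against $D_iA_iz$ or $D_i^{2}z$; in the latter case I would absorb the surplus $h$-powers via $sh\leq 1$ and $h\leq h_0$ (for instance $h^{4}s^{4}=h^{2}(sh)^{2}s^{2}\leq h_0^{2}\,s^{2}$) so that every $D_iz$-contribution lands in the $h^{2}s^{2}|D_iz|^{2}$ bucket. For $C_4(z)$ and $C_5(z)$ I would first expand the outer difference through the product rule \eqref{eq:difference:product}, $D_i(uv)=D_iu\,A_iv+A_iu\,D_iv$, producing one piece proportional to $A_i^{2}z$ and one to $D_iA_iz$, each carrying the factor $\tfrac{h^{2}}{4}$ and a conjugated weight of order $\mathcal{O}_{\lambda}(s^{2})$; the same $h$-absorption then places them in the $s^{2}|z|^{2}$ and $h^{2}s^{2}|D_iz|^{2}$ buckets respectively.

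The main obstacle is bookkeeping rather than conceptual: one must track on which mesh ($\mathcal{M}$, $\mathcal{M}_i^{\ast}$, or $\overline{\mathcal{M}}_{ij}$) each quantity lives, verify the uniform-in-$h$ $L^2_h$ boundedness of the averaging and difference operators across these meshes, and -- most delicately -- ensure that every term containing $D_iz$ is accompanied by exactly two powers of $h$, so that it falls into the $h^{2}\sum_i\int_{Q^{\ast}}s^{2}|D_iz|^{2}$ term and not into a spurious $\int_{Q}s^{2}|D_iz|^{2}$ contribution. This is precisely where the spatial dependence of $\gamma$ must be handled with care, since the factors $D_i\gamma_i$ and the correction terms $C_4,C_5$ are what generate the second-difference expressions $D_i^{2}z$; controlling these through $\|hD_i^{2}z\|_{L^2_h}\lesssim\|D_iz\|_{L^2_h}$ and checking the accompanying $h$-powers is the crux of the estimate.
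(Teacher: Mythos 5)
Your proposal is correct and follows essentially the route the paper intends: the paper omits the proof of Lemma \ref{lem:estimate:termsMh}, deferring to the adaptation of \cite[Lemma 4.2]{boyer-2010-1d-elliptic} and the estimate of $\Phi$ in \cite{zhao:2024}, and your term-by-term $L^2_h$ bookkeeping (pulling out the conjugated weights $rA_i^aD_i^b\rho=\mathcal{O}_\lambda(s^b)$ via Lemma \ref{lem:derivative:wrt:x} and Theorem \ref{theo:weight:estimates}, using $\|A_i^2z\|\lesssim\|z\|$, $\|D_iA_iz\|\lesssim\|D_iz\|$, $\|hD_i^2z\|\lesssim\|D_iz\|$, and absorbing surplus powers through $sh\leq 1$) is exactly that adaptation. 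The $h$-power accounting you describe does place every $D_iz$-contribution in the $h^2s^2$ bucket, so no gap.
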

Furthermore, we will need the following lemma, where its proof is analogous to Lemma 3.11 presented in \cite{boyer-2014}, and will therefore be omitted.
\begin{lemma}\label{lem:inequalityforDz}
    For $0<h\leq h_1(\lambda)$ for some $h_1(\lambda)$ sufficiently small, we have
    \begin{equation*}
         \begin{split}
        \sum_{i=1}^{n} \E\int_{Q_i^{\ast}} s\lambda^2\varphi \gamma_i |\nabla_{\gamma}\psi|^2|D_iz|^2\,dt\geq\frac{1}{n}\sum_{i=1}^{n}\E\int_{Q} s\lambda^2\varphi \gamma_i|\nabla_{\gamma}\psi|^2&|A_iD_iz|^2\,dt\\
        &+X_3+Y_3,
    \end{split}
\end{equation*}
 where 
 \begin{align*}
         &X_4\triangleq\, \sum_{\substack{i,j=1\\i\ne j}}^{n}\left(\E\int_{Q_{ij}^{\ast}}h\mathcal{O}_{\lambda}(sh)\,|D^2_{ij}z|^2\,dt-\E\int_{Q_{i}^{\ast}}h\mathcal{O}(sh)|D_iz|^2\,dt\right)\\
        &+\sum_{i=1}^{n}\left(\E\int_{Q} h\mathcal{O}_{\lambda}(sh)|A_iD_iz|^2\,dt+\E\int_{Q} h\mathcal{O}_{\lambda}(sh)\right.|D^2_iz|^2\,dt\\
        &\left.-\E\int_{Q_i^{\ast}} h\mathcal{O}_{\lambda}(sh)|D_iz|^2\,dt\right)
 \end{align*}
 and
 \begin{equation*}
     Y_3\triangleq\, \sum_{i=1}^{n}\E\int_{\partial_iQ} h\mathcal{O}(sh)t_r^i(|D_iz|^2)\,dt.
 \end{equation*}
\end{lemma}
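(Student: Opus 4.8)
The plan is to read this as a purely spatial, deterministic inequality: there is no differential $dz$ or $dB$ here, so $\E$ and $\int_0^T(\cdot)\,dt$ are inert and it suffices to prove, for each fixed time slice and sample, a discrete functional inequality on the meshes and then carry $\E\int_0^T\,dt$ along passively. The content is to manufacture the primal‑mesh averaged‑gradient quantity $|A_iD_iz|^2$ (living on $\mathcal{M}$) out of the dual‑mesh gradient quantity $|D_iz|^2$ (living on $\mathcal{M}_i^\ast$) that the cross‑product estimate \eqref{eq:sumterms} produces, so that after returning to $w=\rho z$ one recovers the $|A_iD_iw|^2$ piece of $J(w)$. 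The essential tool is \eqref{eq:promediocuadrado}, which with $u=D_iz$ reads $A_i(|D_iz|^2)=|A_iD_iz|^2+\frac{h^2}{4}|D_i^2z|^2$ and in particular gives the pointwise bound $|A_iD_iz|^2\leq A_i(|D_iz|^2)$ of \eqref{eq:promedioinequality} on $\mathcal{M}$.

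Fixing a direction $i$, I would proceed in three moves. Introduce the (one‑signed) weight $\omega_i\triangleq s\lambda^2\varphi\gamma_i|\nabla_{\gamma}\psi|^2$, sampled on $\mathcal{M}$, and its dual sample $\widetilde{\omega}_i$ on $\mathcal{M}_i^\ast$. First, multiply \eqref{eq:promedioinequality} by $\omega_i$ and integrate over $\mathcal{M}$. Second, apply the average integration‑by‑parts formula \eqref{eq:int:ave} with $u=\omega_i$ and $v=|D_iz|^2\in C(\mathcal{M}_i^\ast)$ to transfer the average onto the weight,
\[
\int_{\mathcal{M}}\omega_i\,A_i(|D_iz|^2)=\int_{\mathcal{M}_i^\ast}|D_iz|^2\,A_i\omega_i-\frac{h}{2}\int_{\partial_i\mathcal{M}}\omega_i\,t_{r}^{i}(|D_iz|^2),
\]
which moves the good term from $\mathcal{M}$ to $\mathcal{M}_i^\ast$ at the cost of a boundary contribution gathered into $Y_3$. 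Third, compare $A_i\omega_i$ with $\widetilde{\omega}_i$: since $\psi$, $\gamma_i$ are smooth and $\varphi=e^{\lambda\psi}-e^{\lambda K}$ as in \eqref{funcion-peso-2}, a discrete Taylor expansion gives $A_i\omega_i=\widetilde{\omega}_i+s\,\mathcal{O}_{\lambda}(h)$‑type remainders carrying explicit powers of $\lambda$ and positive powers of $h$; the discarded $\frac{h^2}{4}|D_i^2z|^2$ term and the discrete product‑ and averaging‑rule remainders, together with the multi‑directional weight $|\nabla_{\gamma}\psi|^2$, also generate the same‑ and cross‑direction second‑difference terms $|D_i^2z|^2$ and $|D_{ij}^2z|^2$ on $Q_i^\ast$ and $Q_{ij}^\ast$. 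Summing over $i$ and, with room to spare, weakening the constant of the main term from $1$ to $\tfrac1n$ yields the claimed inequality, with all error pieces assembled into $X_4$ and all boundary pieces into $Y_3$.

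The main obstacle is bookkeeping rather than any conceptual difficulty: one must expand $A_i\omega_i-\widetilde{\omega}_i$ and the discrete remainders while tracking the $\lambda$‑ and $s$‑dependence so that every error term lands in the precise classes $\mathcal{O}_{\lambda}(sh)$, $h\mathcal{O}_{\lambda}(sh)$ and $\mathcal{O}_{\lambda}((sh)^2)$ recorded in $X_4$ and $Y_3$, and so that the leading weight‑discrepancy factor has the form $C_{\lambda}h$. This is exactly where the threshold $h\leq h_1(\lambda)$ enters: for $h$ small relative to $\lambda$ this factor is negligible, the main term survives with constant at least $\tfrac1n$, and the discarded $\frac{h^2}{4}|D_i^2z|^2$ correction is harmless. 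Because the temporal and stochastic structure is completely inert in this estimate, the argument is a verbatim transcription of the deterministic Lemma~3.11 of \cite{boyer-2014}, which is why the authors omit it.
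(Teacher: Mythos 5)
Your overall mechanism is the natural one, and the paper gives no proof of its own (it defers to Lemma~3.11 of \cite{boyer-2014}), so the comparison has to be against that source. The chain you propose --- $|A_iD_iz|^2\leq A_i(|D_iz|^2)$ on $\mathcal{M}$ from \eqref{eq:promedioinequality}, multiplication by the weight, transfer of $A_i$ onto the weight via \eqref{eq:int:ave}, and the comparison $A_i\omega_i=\omega_i+h\,\mathcal{O}_{\lambda}(sh)$ on $\mathcal{M}_i^{\ast}$ --- does produce an inequality of the stated shape, direction by direction, and if valid it is in fact sharper than the statement (constant $1$ rather than $1/n$, and fewer error terms). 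But two points need attention.

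First, a genuine logical issue: every step of your argument (multiplying \eqref{eq:promedioinequality} by $\omega_i$, discarding the $\tfrac{h^2}{4}|D_i^2z|^2$ contribution, and weakening $1$ to $\tfrac1n$) requires $\omega_i=s\lambda^2\varphi\gamma_i|\nabla_{\gamma}\psi|^2\geq 0$. With the paper's literal definition \eqref{funcion-peso-2}, $\varphi=e^{\lambda\psi}-e^{\lambda K}<0$ since $K>\|\psi\|_\infty$, so $\omega_i\leq 0$ and every inequality in your chain reverses, yielding the opposite of the claimed bound. This sign inconsistency pervades the paper (the ``good'' terms $s^3\varphi^3|z|^2$ are treated as nonnegative throughout), so it is arguably the authors' convention problem rather than yours, but calling the weight merely ``one-signed'' does not resolve it: your proof is only valid under the positive-$\varphi$ convention, and you should say so explicitly. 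Second, your argument cannot produce the error terms actually listed in $X_4$: the cross-difference terms $\E\int_{Q_{ij}^{\ast}}h\mathcal{O}_{\lambda}(sh)|D_{ij}^2z|^2$ with $i\neq j$, and the terms in $|A_iD_iz|^2$ and $|D_i^2z|^2$ on $Q$, never arise when only $A_i$ and $D_i$ act in the $i$-th direction; your suggestion that the discarded $\tfrac{h^2}{4}|D_i^2z|^2$ term ``generates'' them is not a mechanism (that term is dropped with a favorable sign). Those terms, together with the factor $1/n$, indicate that the cited proof passes through the full product rule $A_i(\omega_i|D_iz|^2)=A_i\omega_i\,A_i(|D_iz|^2)+\tfrac{h^2}{4}D_i\omega_i\,D_i(|D_iz|^2)$ with a Young inequality, and through multi-directional manipulations, i.e.\ it is structurally different from your single-direction transfer; your closing claim that the argument is a ``verbatim transcription'' of \cite{boyer-2014} is therefore not accurate. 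Likewise your boundary contribution is $\tfrac{h}{2}\,\omega_i\,t_r^i(|D_iz|^2)=\mathcal{O}_{\lambda}(sh)\,t_r^i(|D_iz|^2)$, one power of $h$ larger than the stated $Y_3$ (harmless downstream, since such boundary terms are absorbed via $|D_iz|^2\leq Ch^{-2}(|\tau_{-i}z|^2+|\tau_{+i}z|^2)$, but it does not reproduce $Y_3$ as written). Finally, note the statement itself writes ``$X_3+Y_3$'' while defining $X_4$; you should flag and correct that mismatch rather than silently absorbing it.
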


Combining the lemma \ref{lem:estimate:termsMh} and \ref{lem:inequalityforDz} with \eqref{eq:sumterms}, we see that if we choose $\lambda_{1}\geq 1$ sufficiently large, then $\lambda=\lambda_1$(fixed for the rest of the proof), $0<\tau h(\max_{[0,T]\theta})\leq \varepsilon_{1}(\lambda)$ and $0<h\leq h_1(\lambda_1)$, we have
\begin{equation}\label{eq:conmutador}
\begin{split}
   \E\int_{Q}|rf|^2\,dt\geq &\,\E\int_{Q}2s^{3}\lambda^{4}\varphi^{3}|\nabla_{\gamma}\psi|^{4}\,|z|^{2}\,dt+ \sum_{i=1}^{n}\E\int_{Q_{i}^{\ast}}s\lambda^{2}\varphi|\nabla_{\gamma}\psi|^{2}\,|D_{i}z|^{2}\,dt\\
   &+\frac{1}{n}\sum_{i=1}^{n}\E\int_{Q} s\lambda^2\varphi \gamma_i|\nabla_{\gamma}\psi|^2|A_iD_iz|^2\,dt+\tilde{X}+\tilde{Y},
   \end{split}
\end{equation}
with
\begin{equation}
\begin{split}
    \tilde{X}\triangleq&\sum_{l=1}^{4}X_{l}+\sum_{m=1}^{3}Y_{m}-\E\int_{Q}s^{3}\lambda^{3}\varphi^{3}\mathcal{O}(1)|z|^{2}\,dt.
    \end{split}
\end{equation}
We can now choose $\varepsilon_{0}$ and $h_{0}$ sufficiently small, with $0<\varepsilon_{0} \leq \varepsilon_{1}(\lambda_{1})$, $0<h_0\leq h_1(\lambda_1)$, and $\tau_{1}\geq 1$ sufficiently large, such that for $\tau\geq \tau_1(T+T^2)$ (meaning in particular that $s(t)$ is taken sufficiently large), $0<h\leq h_{0}$, and $\tau h(\max_{[0,T]}\theta)\leq \varepsilon_{0}$, we obtain 
\begin{equation}\label{eq:firstestimate}
\begin{split}
    C_{s_{0},\varepsilon}\E\int_{Q}&|rf|^2\,dt\geq\,\E\int_{Q}s^{3}\lambda^{4}_{1}\varphi^{3}|\nabla_{\gamma}\psi|^{4}\,|z|^{2}+ \sum_{i=1}^{n}\E\int_{Q_{i}^{\ast}}s\lambda^{2}_{1}\varphi|\nabla_{\gamma}\psi|^{2}\,|D_{i}z|^{2}\\
    &+\frac{1}{n}\sum_{i=1}^{n}\E\int_{Q} s\lambda_{1}^2\varphi \gamma_i|\nabla_{\gamma}\psi|^2|A_iD_iz|^2\,dt-C_{s_0,\epsilon}\E\int_{Q}s^2|dz|^2-\tilde{Y},
    \end{split}
\end{equation}
where
\begin{equation}
    \begin{split}
        \tilde{Y}\triangleq \,\sum_{i=1}^{n}\E\int_{\partial_i Q }2s\lambda \varphi &(\gamma_i)^2\partial_i\psi\, t_r^i((D_iz|^2)\nu_i\,dt\\
        &+C_{s_0,\epsilon}\left(\left.\sum_{i=1}^{n}\E\int_{\mathcal{M}_i^{\ast}}|D_iz|^2\right|_0^T+\left.\E\int_{\mathcal{M}}s^2\,|z|^2\right|_0^T\right).
    \end{split}
\end{equation}
Moreover, since $|D_iz|^2\leq\,Ch^{-2}(|\tau_{-i}v|^2+|\tau_{+i}v|^2)$ and remember that $\psi$ satisfying \eqref{assumtion:psi}, we obtain
\begin{equation*}
    \tilde{Y}\leq\, C_{s_0,\epsilon}h^{-2}\left(\left.\E\int_{\mathcal{M}}|z|^2\right|_{t=0}+\left.\E\int_{\mathcal{M}}|z|^2\right|_{t=T}\right).
\end{equation*}
Therefore, we write \eqref{eq:firstestimate} as
\begin{equation}\label{eq:finalestimateinz}
\begin{split}  &\E\int_{Q}s^{3}\lambda^{4}_{1}\varphi^{3}\,|z|^{2}\,dt+ \sum_{i=1}^{n}\E\int_{Q_{i}^{\ast}}s\lambda^{2}_{1}\varphi\,|D_{i}z|^{2}\,dt+\sum_{i=1}^{n}\E\int_{Q} s\lambda_{1}^2\varphi|A_iD_iz|^2\,dt\\
&\leq\,\E\int_{0}^T\int_{G_{1}\cap \mathcal{M}}s^{3}\lambda^{4}_{1}\varphi^{3}\,|z|^{2}\,dt+ \sum_{i=1}^{n}\E\int_0^T\int_{G_{1}\cap\mathcal{M}_{i}^{\ast}}s\lambda^{2}_{1}\varphi\,|D_{i}z|^{2}\,dt\\ &+C_{s_{0},\varepsilon}\left(\E\int_{Q}|rf|^2\,dt+\E\int_{Q}s^2 \,|dz|^2+ h^{-2}\left.\E\int_{\mathcal{M}}|z|^2\right|_{t=0}+h^{-2}\left.\E\int_{\mathcal{M}}|z|^2\right|_{t=T}\right).
\end{split}
\end{equation}
%%%%%%%%%%%%%%%%%%%%%%%%%
\subsection{Returning back to the original variable}\label{sub:backVarible}
%%%%%%%%%%%%%%%%%%%%%%%%
Finally, we return to our original function. For this purpose, we obtain the following result. 
\begin{lemma}\label{lem:inequalityDz-Dw}
For $\tau h (\max_{[0,T]}{\theta})\leq 1$, we have for each $i=1,...,n$ 
\begin{equation*}
   \begin{split}
        \E\int_{Q_{i}^{\ast}}s\varphi \lambda^2&|rD_i w|^2\,dt\leq C\left(\E\int_{Q_i^{\ast}}s\varphi\lambda^2\,|D_iz|^2\,dt+\E\int_{Q_{i}^{\ast}}s\mathcal{O}_{\lambda}((sh)^2)\,|D_iz|^2\,dt\right.\\
        &\left. \E\int_{Q}s^3\varphi^3\lambda^4|z|^2\,dt+\E\int_{Q}s\mathcal{O}_{\lambda}((sh)^2)\,|z|^2\,dt+\E\int_{Q}s^3\mathcal{O}_{\lambda}((sh)^2)\,|z|^2\,dt\right),
    \end{split}
\end{equation*}
\begin{equation*}
    \begin{split}
        \E\int_0^T&\int_{G_{1}\cap\mathcal{M}_i^{\ast}}s\varphi\lambda^2|D_iz|^2\,dt\\
        &\leq\, C\left(\E\int_0^T\int_{G_{1}\cap\mathcal{M}_i^{\ast}}s\varphi\lambda^2\,|rD_iw|^2\,dt+\E\int_0^T\int_{G_{1}\cap \mathcal{M}_{i}^{\ast}}s\mathcal{O}_{\lambda}((sh)^2)\,|D_iz|^2\,dt\right.\\
        &\left. \E\int_0^T\int_{G_{1}\cap\mathcal{M}}s^3\varphi^3\lambda^4|z|^2\,dt+\E\int_0^T\int_{G_{1}\cap\mathcal{M}}s^3\mathcal{O}_{\lambda}((sh)^2)\,|z|^2\,dt\right)
    \end{split}
\end{equation*}
and
\begin{equation*}
    \begin{split}
        \E\int_{Q}s\lambda^2\varphi |rA_iD_iw|^2\,dt\leq \, C\left(\E\int_{Q}s\lambda^2\varphi|A_iD_iz|^2\,dt\right.&+\E\int_{Q}s^3\lambda^4\varphi\,|z|^2\,dt\\
        &\left.+\E\int_{Q}\mathcal{O}_{\lambda}((sh)^4)\,|D^2_iz|^2\,dt\right).
    \end{split}
\end{equation*}
\end{lemma}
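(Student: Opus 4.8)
The plan is to move between $z$ and $w$ through the change of variables $z=rw$ (equivalently $w=\rho z$ with $\rho=r^{-1}$) together with the discrete product rules of Proposition \ref{pro:product}, controlling every commutator term by the pointwise behaviour of the exponential weight $r=e^{s\varphi}$ under the operators $A_i,D_i,A_iD_i,D_i^2$, as recorded in Appendix \ref{sec:weight:function}. The facts I would invoke repeatedly are $rA_i\rho=1+\mathcal{O}_\lambda((sh)^2)$, $rA_i^2\rho=1+\mathcal{O}_\lambda((sh)^2)$, $rD_i\rho=\mathcal{O}_\lambda(s)$, $rA_iD_i\rho=\mathcal{O}_\lambda(s)$, $rD_iA_i\rho=\mathcal{O}_\lambda(s)$ and $rD_i^2\rho=\mathcal{O}_\lambda(s^2)$, whose leading contributions are the discretizations of $\partial_i(e^{-s\varphi})=-s\lambda\varphi\,\partial_i\psi\,e^{-s\varphi}$ and $\partial_i^2(e^{-s\varphi})$; all are uniform for $\tau h\max_{[0,T]}\theta\leq 1$.

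For the first inequality I would apply \eqref{eq:difference:product} to $w=\rho z$ to get $D_iw=A_i\rho\,D_iz+D_i\rho\,A_iz$, multiply by $r$ and use $rA_i\rho=1+\mathcal{O}_\lambda((sh)^2)$ and $rD_i\rho=\mathcal{O}_\lambda(s)$, obtaining $rD_iw=(1+\mathcal{O}_\lambda((sh)^2))D_iz+\mathcal{O}_\lambda(s)A_iz$ where the last factor carries a leading $s\lambda\varphi$. Squaring, multiplying by $s\varphi\lambda^2$, integrating over $Q_i^\ast$ and replacing $|A_iz|^2$ by $|z|^2$ via \eqref{eq:promedioinequality} and the averaging integration by parts \eqref{eq:int:ave} (which transfers the $\mathcal{M}_i^\ast$-integral of $A_i(|z|^2)$ back to an $\mathcal{M}$-integral up to boundary terms that vanish since $z=0$ on $\partial\mathcal{M}$) then reproduces the stated bound: the $\mathcal{O}_\lambda(s)A_iz$ group, squared and weighted, yields the $s^3\varphi^3\lambda^4|z|^2$ contribution and its $(sh)^2$-corrections, while the $\frac{h^2}{4}|D_iz|^2$ remainder of \eqref{eq:promediocuadrado} is reabsorbed into the $|D_iz|^2$ terms already present.

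The second inequality is the same computation read in reverse: I would solve $D_iz=A_ir\,D_iw+D_ir\,A_iw$ for $D_iz$, use $A_ir=(1+\mathcal{O}_\lambda((sh)^2))r$ and $D_ir=\mathcal{O}_\lambda(s)r$, expand $A_iw=A_i\rho\,A_iz+\frac{h^2}{4}D_i\rho\,D_iz$ by \eqref{eq:average:product}, absorb the resulting $h^2\mathcal{O}_\lambda(s^2)|D_iz|^2$ self-term for $sh$ small, and integrate over $G_1\cap\mathcal{M}_i^\ast$. The third inequality is analogous but applies $A_iD_i$ to $w=\rho z$: combining \eqref{eq:difference:product} and \eqref{eq:average:product} gives $A_iD_iw=A_i^2\rho\,A_iD_iz+A_iD_i\rho\,A_i^2z+\frac{h^2}{4}\big(D_i^2\rho\,D_iA_iz+D_iA_i\rho\,D_i^2z\big)$, so that after the weight estimates $rA_iD_iw=(1+\mathcal{O}_\lambda((sh)^2))A_iD_iz+\mathcal{O}_\lambda(s)A_i^2z+h^2\mathcal{O}_\lambda(s^2)D_i^2z$; squaring and multiplying by $s\lambda^2\varphi$ turns the three groups into the $|A_iD_iz|^2$, $|z|^2$, and $\mathcal{O}_\lambda((sh)^4)|D_i^2z|^2$ terms on the right-hand side (the last because $h^4\mathcal{O}_\lambda(s^2)\cdot s\lambda^2\varphi=\mathcal{O}_\lambda((sh)^4)$).

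The main obstacle I anticipate is the precise bookkeeping of the commutator orders: one must verify that the weight estimates of Appendix \ref{sec:weight:function} hold with exactly the stated $\mathcal{O}_\lambda$-orders uniformly in the regime $sh\leq\varepsilon_0$, and that the passages between the primal mesh $\mathcal{M}$ and the dual meshes $\mathcal{M}_i^\ast$ through Proposition \ref{prop:integralbyparts} generate no uncontrolled boundary contributions — guaranteed here by $z=0$ on $\partial\mathcal{M}$ — and no remainders outside the claimed classes. Converting $|A_iz|^2$ and $|A_i^2z|^2$ into $|z|^2$, a triviality in the continuous setting, relies on \eqref{eq:promedioinequality}--\eqref{eq:diffaveinequality}, and the delicate point is to ensure that the $h^2$-weighted remainders in \eqref{eq:promediocuadrado} and in the expansion of $A_iD_iw$ are reabsorbed into terms already present rather than counted as genuinely new error.
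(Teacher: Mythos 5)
Your proposal is correct and follows essentially the route the paper intends: the paper gives no proof of Lemma \ref{lem:inequalityDz-Dw}, referring instead to the final part of the proof of Theorem 1.4 in \cite{boyer-2014}, and that argument is precisely your combination of the discrete Leibniz rules applied to $w=\rho z$ (and its inverse $z=rw$) with the weight estimates of Appendix \ref{sec:weight:function}, followed by the conversion of $|A_iz|^2$ and $|A_i^2z|^2$ into $|z|^2$ via \eqref{eq:promedioinequality} and \eqref{eq:int:ave}. The only point left implicit on both sides is that the localized integration by parts over $G_1\cap\mathcal{M}_i^{\ast}$ lands on a set shifted by $h/2$ rather than exactly on $G_1\cap\mathcal{M}$, a standard technicality absorbed by enlarging $G_1$ slightly while staying inside $G_0$.
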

For a proof of this result, it can be obtained using the same argument given in the final part of the proof of Theorem 1.4 in \cite{boyer-2014}, both for the discrete spatial derivative and for the average of this same operator. 

Combining the lemma \ref{lem:inequalityDz-Dw} with \eqref{eq:finalestimateinz} and noting
\begin{equation}\label{eq:equaldw^2}
    \E\int_{Q}s^2|dz|^2=\E\int_{Q}s^2|rdw|^2=\E\int_{Q}s^2|rg|^2\,dt,
\end{equation}
we conclude the following lemma.
\begin{lemma}\label{lem:last}
    Given any $\lambda>\lambda_1$, exits $\varepsilon_{0}$ and $h_{0}$ sufficiently small, with $0<\varepsilon \leq \varepsilon_0(\lambda_{1})$, and $\tau_{2}\geq \tau_{1}$ sufficiently large, such that for $\tau\geq \tau_{2}(T+T^2)$, $0<h\leq h_{0}$, and $\tau h(\max_{[0,T]}\theta)\leq \varepsilon_{0}$, 
\begin{equation*}
\begin{split}  
J(w)&+\E\int_{Q}s^{3}\lambda^{4}_{1}\varphi^{3}e^{2s\varphi}\,|w|^{2}\,dt \leq\,\E\int_{0}^T\int_{G_{1}\cap \mathcal{M}}s^{3}\lambda^{4}_{1}\varphi^{3}e^{2s\varphi}\,|w|^{2}\,dt\\
&+ \sum_{i=1}^{n}\E\int_0^T\int_{G_{1}\cap\mathcal{M}_{i}^{\ast}}s\lambda^{2}_{1}\varphi e^{2s\varphi}\,|D_{i}w|^{2}\,dt+C_{s_{0},\varepsilon}\left(\E\int_{Q}e^{2s\varphi}|f|^2\,dt\right.\\
&\left.+\E\int_{Q}s^2 e^{2s\varphi}\,|g|^2dt+ h^{-2}\left.\E\int_{\mathcal{M}}e^{2s\varphi}|w|^2\right|_{t=0}+h^{-2}\left.\E\int_{\mathcal{M}}e^{2s\varphi}|w|^2\right|_{t=T}\right).
\end{split}
\end{equation*}
where $J(w)\triangleq\, \sum_{i=1}^{n}\E\int_{Q_{i}^{\ast}}s\lambda^{2}_{1}\varphi e^{2s\varphi}\,|D_{i}w|^{2}\,dt+\E\int_{Q}s\lambda_{1}^2\varphi e^{2s\varphi}|A_iD_iw|^2\,dt$.
\end{lemma}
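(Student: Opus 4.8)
The plan is to deduce the claim directly from the $z$-variable estimate \eqref{eq:finalestimateinz} by transferring every term back to the original unknown $w=\rho z=e^{-s\varphi}z$, using the conversion inequalities of Lemma \ref{lem:inequalityDz-Dw} for the spatial differences and averages and the martingale identity \eqref{eq:equaldw^2} for the stochastic differential. The starting point is that $r=e^{s\varphi}$ is deterministic with $r\rho=1$, so that pointwise $e^{2s\varphi}|w|^2=|z|^2$, $e^{2s\varphi}|D_iw|^2=|rD_iw|^2$ and $e^{2s\varphi}|A_iD_iw|^2=|rA_iD_iw|^2$. Consequently the weighted energy $J(w)+\E\int_Q s^3\lambda_1^4\varphi^3 e^{2s\varphi}|w|^2\,dt$ on the left of the statement is nothing but $\sum_i\E\int_{Q_i^\ast}s\lambda_1^2\varphi|rD_iw|^2\,dt+\E\int_Q s\lambda_1^2\varphi|rA_iD_iw|^2\,dt+\E\int_Q s^3\lambda_1^4\varphi^3|z|^2\,dt$, whose last summand already coincides with the leading term on the left of \eqref{eq:finalestimateinz}.

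First I would bound the left-hand side of the claim by the left-hand side of \eqref{eq:finalestimateinz} plus controlled remainders: the first and third estimates of Lemma \ref{lem:inequalityDz-Dw} dominate $\E\int s\lambda_1^2\varphi|rD_iw|^2$ and $\E\int s\lambda_1^2\varphi|rA_iD_iw|^2$ by the matching $z$-quantities $\E\int s\lambda_1^2\varphi|D_iz|^2$ and $\E\int s\lambda_1^2\varphi|A_iD_iz|^2$, up to a multiple of the leading term $\E\int s^3\lambda_1^4\varphi^3|z|^2$ and the lower-order remainders $\E\int s\mathcal{O}_\lambda((sh)^2)|D_iz|^2$, $\E\int s^3\mathcal{O}_\lambda((sh)^2)|z|^2$ and $\E\int\mathcal{O}_\lambda((sh)^4)|D_i^2z|^2$. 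I would then apply \eqref{eq:finalestimateinz} to replace the resulting leading $z$-energy by the localized observation terms on $G_1$ together with the source terms $\E\int_Q|rf|^2\,dt$, $\E\int_Q s^2|dz|^2$ and the two time-endpoint terms, namely $h^{-2}\E\int_{\mathcal{M}}|z|^2$ evaluated at $t=0$ and at $t=T$.

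The final reduction converts the right-hand side back to the original variable. The endpoint and the $f$ terms are rewritten verbatim through $|z|^2=e^{2s\varphi}|w|^2$ and $|rf|^2=e^{2s\varphi}|f|^2$, while \eqref{eq:equaldw^2} turns $\E\int_Q s^2|dz|^2$ into $\E\int_Q s^2 e^{2s\varphi}|g|^2\,dt$, exploiting that the quadratic variation of $dz=r\,dw+(\partial_t r)w\,dt$ is carried entirely by the diffusion $g\,dB(t)$. For the observation terms, $\E\int_0^T\!\int_{G_1\cap\mathcal{M}}s^3\lambda_1^4\varphi^3|z|^2$ equals the desired $\E\int_0^T\!\int_{G_1\cap\mathcal{M}}s^3\lambda_1^4\varphi^3 e^{2s\varphi}|w|^2$ outright, and the second estimate of Lemma \ref{lem:inequalityDz-Dw} replaces the gradient observation $\sum_i\E\int_0^T\!\int_{G_1\cap\mathcal{M}_i^\ast}s\lambda_1^2\varphi|D_iz|^2$ by $\sum_i\E\int_0^T\!\int_{G_1\cap\mathcal{M}_i^\ast}s\lambda_1^2\varphi|rD_iw|^2$ plus remainders localized in $G_1$, which are absorbed into the $s^3\varphi^3 e^{2s\varphi}|w|^2$ observation term since, with $\lambda=\lambda_1$ now fixed, $\varphi$ and $|\nabla_\gamma\psi|$ are bounded above and below by positive constants.

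The main obstacle is the absorption of the remainders produced by the three conversion steps. Each such term carries either a factor $\mathcal{O}_\lambda((sh)^2)$ or $\mathcal{O}_\lambda((sh)^4)$, or is of strictly lower polynomial order in $s$ than the corresponding leading term; since \eqref{eq:finalestimateinz} already controls the $z$-energy $\E\int s^3\varphi^3|z|^2+\sum_i\E\int s\varphi|D_iz|^2+\sum_i\E\int s\varphi|A_iD_iz|^2$ by its right-hand side, choosing $\tau$ large and $h_0,\varepsilon_0$ small (with $\varepsilon_0\le\varepsilon_1(\lambda_1)$, $h_0\le h_1(\lambda_1)$, $\tau\ge\tau_2(T+T^2)$) renders each remainder a small multiple of the right-hand side and lets it be swallowed. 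The genuinely delicate remainders are those involving the second discrete differences $|D_i^2z|^2$ (and the mixed $|D_{ij}^2z|^2$), which are not among the quantities directly controlled by \eqref{eq:finalestimateinz}; these must be reduced to the controlled first differences and averages through the reconstruction and product identities of Proposition \ref{pro:product}, notably \eqref{eq:averengeanddifference}, so that the $(sh)^k$ prefactor compensates the inverse-inequality loss in $h$. This bookkeeping is exactly the discrete phenomenon already handled in the deterministic analysis of \cite{boyer-2014}, and verifying that the several smallness thresholds can be met simultaneously and uniformly in $h$ is what fixes the constant $C_{s_0,\varepsilon}$ in the statement.
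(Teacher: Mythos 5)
Your proposal is correct and follows exactly the route the paper takes: the paper's entire proof of this lemma is the one-line instruction to combine the $z$-variable estimate \eqref{eq:finalestimateinz} with the three conversion inequalities of Lemma \ref{lem:inequalityDz-Dw} and the identity \eqref{eq:equaldw^2}, which is precisely your plan. Your additional bookkeeping on absorbing the $\mathcal{O}_{\lambda}((sh)^k)$ remainders (including the $|D_i^2z|^2$ terms via inverse inequalities, as in \cite{boyer-2014}) is more explicit than what the paper records, but it is the same argument.
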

Now, we will need an estimate for the second term on the right-hand side in the previous lemma. For this purpose, we obtain the following Lemma.\begin{lemma}\label{lem:estimate:termDw}
    For $0<\tau h (\max_{[0,T]}\theta)\leq 1$, we have
    \begin{equation*}
        \begin{split}
            \sum_{i=1}^{n}\E\int_0^T\int_{G_{1}\cap\mathcal{M}_{i}^{\ast}}s\lambda^{2}_{1}\varphi e^{2s\theta\varphi}\,|D_{i}w|^{2}\,dt\leq \sum_{i=1}^{n}&\E\int_0^T\int_{G_{0}\cap\mathcal{M}_{i}^{\ast}}s\lambda^{2}_{1}\varphi e^{2s\theta\varphi}\,|D_{i}w|^{2}\,dt\\
            \leq C_{s_3,\epsilon}\left(\E\int_0^T\int_{G_{0}\cap\mathcal{M}}s^3\lambda^2\varphi ^3e^{2s\theta\varphi}\,|w|^2\,dt+\E\int_0^T\right.&\int_{G_{0}\cap\mathcal{M}}\lambda^{-2}e^{2s\theta\varphi}|f|^2\\
            &\left.+\left.h^{-1}\E\int_{G_{0}\cap\mathcal{M}}e^{2s\theta\varphi}\,|w|^2\right|_{t=T}\right).
        \end{split}
    \end{equation*}
\end{lemma}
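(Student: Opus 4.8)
The first inequality is immediate, since $\overline{G_1}\subset G_0$ and the integrand is nonnegative; the content lies in the second. The plan is the standard ``Caccioppoli''-type reduction of a gradient observation to a function observation. I would introduce a smooth cutoff $\chi$ with $0\le\chi\le1$, $\chi\equiv1$ on a neighborhood of $\overline{G_1}$ and $\operatorname{supp}\chi\subset G_0$, sampled on the primal and dual meshes, so that with $\zeta:=s\lambda_1^2\varphi e^{2s\theta\varphi}$ one has $\sum_i\E\int_0^T\int_{G_1\cap\mathcal{M}_i^\ast}\zeta|D_iw|^2\le\sum_i\E\int_0^T\int_{\mathcal{M}_i^\ast}\chi\zeta|D_iw|^2$. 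Because $\mathrm{reg}(\gamma)\le\gamma_0$ forces $\gamma_i\ge\gamma_0^{-1}>0$, I would pass to the $\gamma_i$-weighted quantity $\sum_i\E\int_0^T\int_{\mathcal{M}_i^\ast}\chi\zeta\gamma_i|D_iw|^2$; this extra $\gamma_i$ is precisely what lets the discrete second difference later reconstruct the operator $\sum_iD_i(\gamma_iD_iw)$ that appears in the equation. Every observation produced on the right is supported in $\operatorname{supp}\chi\subset G_0$, which accounts for the localization to $G_0$ in the statement.

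Next I would apply the discrete integration-by-parts formula \eqref{eq:int:dif} in each direction to transfer one $D_i$ onto the weighted factor. Since $\chi$ vanishes near $\partial G$, the boundary contributions over $\partial_i\mathcal{M}$ vanish for $h\le h_0$. Expanding with the product rule \eqref{eq:difference:product} (and \eqref{eq:average:product} for the $\tfrac{h^2}{4}$-corrections) splits each term into a ``first-order'' part $-\E\int_Q wD_i(\chi\zeta)A_i(\gamma_iD_iw)$ and a ``second-order'' part $-\E\int_Q wA_i(\chi\zeta)D_i(\gamma_iD_iw)$. Summing over $i$, the leading piece of the second-order part is $-\E\int_Q w\,\chi\zeta\sum_iD_i(\gamma_iD_iw)$, the commutators $A_i(\chi\zeta)-\chi\zeta=\mathcal{O}_\lambda(h^2)\zeta$ and the $\tfrac{h^2}{4}$-corrections being controlled through $sh\le\epsilon_0$.

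I would then substitute the equation $\sum_iD_i(\gamma_iD_iw)\,dt=-dw+f\,dt+g\,dB(t)$ into this leading term. The $f$-contribution is handled by Young's inequality, split to produce $\tfrac12\E\int_0^T\int_{G_0\cap\mathcal{M}}\lambda_1^{-2}e^{2s\theta\varphi}|f|^2$ together with a multiple of the observation $\E\int_0^T\int_{G_0\cap\mathcal{M}}s^3\lambda_1^2\varphi^3e^{2s\theta\varphi}|w|^2$, which is admissible once $\tau\ge\tau_3$ is large (so that $|\zeta|\le(s^3\varphi^3)^{1/2}$). The $g\,dB$-contribution has zero expectation. The decisive $w\,dw$ term I would treat by Itô's formula applied to $\chi\zeta|w|^2$, giving $\tfrac12\E\int_\mathcal{M}\chi\zeta|w|^2\big|_0^T-\tfrac12\E\int_0^T\int_\mathcal{M}\partial_t(\chi\zeta)|w|^2-\tfrac12\E\int_0^T\int_\mathcal{M}\chi\zeta|g|^2$. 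Here the $|g|^2$-term and the $t=0$ endpoint both carry a favorable negative sign and are simply discarded (this is why no diffusion term survives on the right); the $\partial_t(\chi\zeta)$-term is dominated by the $s^3\varphi^3$-observation for $\tau$ large; and the surviving terminal term $\tfrac12\E\int_\mathcal{M}\chi\zeta|w|^2|_{t=T}$ is bounded, using $s\le\epsilon_0/h$ and boundedness of $\varphi$, by $Ch^{-1}\E\int_{G_0\cap\mathcal{M}}e^{2s\theta\varphi}|w|^2|_{t=T}$, which is exactly the claimed $h^{-1}$-terminal term.

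Finally I would dispose of the first-order term and of the half of the second-order term still carrying the averaged gradient $A_iD_iw$: Young's inequality writes each as a small multiple of $\chi\zeta|A_iD_iw|^2$ (respectively $\chi\zeta|D_iw|^2$) plus an $|w|^2$-term absorbed into the $s^3\varphi^3$-observation. These residual gradient pieces are not controlled by the right-hand side of the lemma in isolation; in the usual way they are absorbed into the global gradient term $J(w)$ on the left of \eqref{eq:inequalityCarleman} when this lemma is combined with Lemma \ref{lem:last}, their coefficients being an $\epsilon$-fraction (governed by $\tau h\max_{[0,T]}\theta\le\epsilon_0$) of the corresponding weights in $J(w)$. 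I expect the main obstacle to be exactly this bookkeeping in the discrete stochastic setting: keeping the $\gamma_i$-weight aligned so the second difference reproduces $\sum_iD_i(\gamma_iD_iw)$, controlling all $h$- and $h^2$-commutators uniformly via $sh\le\epsilon_0$, and—most delicately—checking that the Itô correction $|g|^2$ and the $t=0$ endpoint each come with the sign permitting them to be dropped.
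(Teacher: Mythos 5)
Your overall skeleton is the same as the paper's: a cutoff $\xi$ with $\xi\equiv 1$ on $G_1$ and $\operatorname{supp}\xi\subset G_0$, discrete integration by parts to bring in $\sum_i D_i(\gamma_i D_i w)$, substitution of the equation, It\^o's formula to convert the $w\,dw$ term into endpoint terms plus $-\tfrac12\E\int \partial_t(\cdot)|w|^2-\tfrac12\E\int(\cdot)|g|^2$ (with the $|g|^2$ and $t=0$ contributions discarded by sign and the $t=T$ term bounded by $Ch^{-1}$ via $s\le 1/h$), and Young's inequality for the $f$-term. All of that matches \eqref{eq1:lemma4.7}--\eqref{eq7:lemma4.7}.

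The genuine gap is in your last step. Because you expand $D_i(\chi\zeta\,\gamma_iD_iw)=D_i(\chi\zeta)A_i(\gamma_iD_iw)+A_i(\chi\zeta)D_i(\gamma_iD_iw)$, your ``first-order part'' produces, after Young, a residual $\epsilon\int\chi\zeta|A_iD_iw|^2$, and you explicitly concede that this is \emph{not} controlled by the right-hand side of the lemma, proposing instead to absorb it into $J(w)$ when the lemma is later combined with Lemma \ref{lem:last}. That establishes only a weaker inequality with extra gradient observations on the right; it does not prove the lemma as stated, and the statement is used as a self-contained bound. The paper avoids the $A_iD_iw$ residual entirely by integrating by parts the other way: it keeps $w$ attached to the weight, writing $\int_{\mathcal M} s\xi^2\varphi e^{2s\theta\varphi}\,w\,D_i(\gamma_iD_iw)=-\int_{\mathcal M_i^\ast}s\gamma_i D_i(\xi^2\varphi e^{2s\theta\varphi}w)\,D_iw$ and expanding $D_i(\xi^2\varphi e^{2s\theta\varphi}w)=A_i(\xi^2\varphi e^{2s\theta\varphi})D_iw+D_i(\xi^2\varphi e^{2s\theta\varphi})A_iw$. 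The first piece gives the coercive term $2\sum_i\int s\gamma_iA_i(\xi^2\varphi e^{2s\theta\varphi})|D_iw|^2$ on the left (with a factor $2$), and the only cross term is $D_i(\text{weight})\,A_iw\,D_iw$: its $|D_iw|^2$ half is absorbed into that factor-$2$ left-hand side, and its $|A_iw|^2$ half reduces to $\int_{G_0\cap\mathcal M}s^3\lambda^2\varphi^3e^{2s\theta\varphi}|w|^2$ via $|A_iw|^2\le A_i(|w|^2)$ and \eqref{eq:int:ave}. If you insist on your ordering, you would need to convert $|A_iD_iw|^2\le A_i(|D_iw|^2)$, move the average back onto the (squared) cutoff, and absorb the result into your own left-hand side for $\epsilon$ small; as written, with a first-power cutoff and the deferral to $J(w)$, the argument does not close. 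A second, smaller imprecision: your claim that the $f$-term's companion is admissible ``once $|\zeta|\le(s^3\varphi^3)^{1/2}$'' is not how the paper proceeds --- it simply applies Young to get $Cs^2\lambda^2\varphi^2e^{2s\theta\varphi}|w|^2\le Cs^3\lambda^2\varphi^3e^{2s\theta\varphi}|w|^2$ with no extra largeness condition on $\tau$ beyond the standing hypotheses.
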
 For a proof see Appendix \ref{sec:intermediateResults}.

By Lemma \ref{lem:last}-\ref{lem:estimate:termDw} and observing that since $\max_{[0,T]}\theta \leq \frac{1}{\delta T^2(1+\delta)}\leq \frac{1}{T^2 \delta}$, a sufficient condition for $\tau h(\max_{[0,T]}\theta) \leq \varepsilon_{0}$ become $\tau h(T^2\delta)^{-1}\leq \varepsilon_{0}$, then we complete the proof of the Theorem \ref{theo:Carleman}.

\end{proof}

%%%%%%%%%%%%%%%%%%%%%%%%%%%%%%%%%%%%%%%%%%%%%%%%%%%%%%%%%%%%%%%%%%%%%%%%%%%%%%%%%%%%%%%%%%%%%%%%%%%%%%%%%%%%%%%%%%%%%%%%%%%%%%%%%%%%%%%%%%%%%%%%%%%%%%%%%%%%%%%%%%%%%%%%%%%%%%%%%%%%%%%%%%%%%%%%%%%%%%%%%%%%%%%%%%%%%%%%%%%%%%%%%%%%%%%%%%%%%%%%%%%%%%%%%%%%%%%%%%%
\section{Observability and controllability of stochastic parabolic systems}\label{sec:ObservabilityandControllability}
This section is devoted to proving the $\phi$-observability inequality for semi-discrete stochastic parabolic equation \eqref{systemofcontrolDiscrete}, and then following the penalized HUM methodology to obtain the $\phi$-null controllability for the system \eqref{systemofcontrolDiscrete}. To perform this analysis, we will reduce our problem to establishing a $\phi$-observability inequality for $z$ solution of the following backward stochastic equation.
\begin{equation} \label{systemadjstohocastic}
\left\{
\begin{aligned}
    \mathcal{P}z 
    &= \left( \sum_{i=1}^n a_{1i} A_i D_i z - a_2 z - a_3 Z \right) dt 
    + Z\,dB(t) \text{ in}\, Q, \\
    z &= 0 \text{ on}\, \partial Q, \\
    z(T) &= z_T \text{ in}\, \mathcal{M},
\end{aligned}
\right.
\end{equation}
where $\mathcal{P}z=dz + \sum_{i=1}^n D_i(\gamma_i D_i z) dt $ and $z_T\in L^2_{\mathcal{F}_{T}}(\Omega;L_{h}^2(\mathcal{M}))$ is given. By duality, \eqref{systemadjstohocastic} is obtained from \eqref{systemofcontrolDiscrete} using Proposition \ref{prop:integralbyparts}.
\subsection{The observability for the backward equation.}
Applied the penalized HUM approach, the discrete Carleman estimate \eqref{eq:inequalityCarleman} implies a relaxed observability inequality for the system \eqref{systemadjstohocastic}, which is the following result.
\begin{theorem}\label{Teo:Observability inequality}
    There exists $h_0>0$ such that for all $h\leq h_0$, the following \textit{$\phi$-observability inequality} holds
    \begin{equation*}
        \E\int_{\mathcal{M}}|z(0)|\leq \, C\left(\E\int_{Q}|Z|^2\, dt+\E\int_0^T\int_{G_0\cap\mathcal{M}}|z|^2\,dt+\phi(h)\left.\E\int_{\mathcal{M}}|z|^2\right|_{t=T}\right)
    \end{equation*}
for any solution to the system \eqref{systemadjstohocastic} and $\phi$ satisfying \eqref{condphi}.
\end{theorem}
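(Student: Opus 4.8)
The plan is to apply the Carleman estimate of Theorem~\ref{theo:Carleman} directly to the adjoint state $z$ solving \eqref{systemadjstohocastic}, to reabsorb every term coming from the lower-order coefficients, and then to use an energy (dissipation) estimate to transfer the resulting interior control onto $\E\int_{\mathcal M}|z(0)|^2$. Matching \eqref{systemadjstohocastic} with the operator in Theorem~\ref{theo:Carleman}, I take $w=z$, so that $z$ solves $dz+\sum_{i}D_i(\gamma_iD_iz)\,dt=f\,dt+g\,dB(t)$ with
$$
f=\sum_{i=1}^{n}a_{1i}A_iD_iz-a_2z-a_3Z,\qquad g=Z .
$$
Substituting these into \eqref{eq:inequalityCarleman}, the task reduces to reabsorbing the two source terms $\E\int_Q e^{2s\theta\varphi}|f|^2\,dt$ and $\E\int_Q s^2e^{2s\theta\varphi}|g|^2\,dt$ into the left-hand side, and to rewriting the discretization-induced boundary terms at $t=0$ and $t=T$.

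For the drift term I bound $|f|^2\le C\big(\sum_i|A_iD_iz|^2+|z|^2+|a_2|^2|z|^2+|Z|^2\big)$. The pieces $\sum_i|A_iD_iz|^2$ and $|z|^2$ carry no extra weight, so for $\tau$ large they are absorbed into $J(z)$ and into $\E\int_Q s^3\varphi^3e^{2s\theta\varphi}|z|^2\,dt$, which already contain the large factors $s$ and $s^3$. The genuinely delicate contribution is $\E\int_Q e^{2s\theta\varphi}|a_2|^2|z|^2\,dt$, since $a_2$ is only of class $L^{n^\ast}$. Here I set $v=e^{s\theta\varphi}z$, apply H\"older in space with exponents $n^\ast/2$ and its conjugate, and then invoke the discrete Sobolev inequality (Theorem~\ref{teo:discreteSobolev}); the condition \eqref{conditionofn*} is precisely what guarantees that the resulting exponent $\tfrac{2n^\ast}{n^\ast-2}$ does not exceed the discrete Sobolev exponent, so that $\|v\|_{L^{2n^\ast/(n^\ast-2)}_h}\le C\|v\|_{W^{1,2}_h}$. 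The $W^{1,2}_h$-norm is then expanded through the product rule for $D_i$ and is controlled, up to a fixed power of $s$, by the left-hand side of \eqref{eq:inequalityCarleman}, hence absorbed for $\tau$ large. Finally, both weights $e^{2s\theta\varphi}$ and $s^2e^{2s\theta\varphi}$ are bounded uniformly in $(t,x)$ and in $h$, so the $|Z|^2$-contributions from $f$ and the term in $g$ collapse to $C\,\E\int_Q|Z|^2\,dt$, producing the first term of the claimed inequality.

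It remains to reach $\E\int_{\mathcal M}|z(0)|^2$. I would establish a dissipation estimate for \eqref{systemadjstohocastic} by applying It\^o's formula to $t\mapsto\|z(t)\|_{L^2_h(\mathcal M)}^2$: the discrete integration by parts of Proposition~\ref{prop:integralbyparts} (using $z=0$ on $\partial\mathcal M$) yields the nonnegative dissipative term $2\sum_i\int_{\mathcal M_i^\ast}\gamma_i|D_iz|^2$, into which the drift coupling $a_{1i}A_iD_iz$ is absorbed, while $a_2z$ is again treated with the discrete Sobolev inequality and $a_3Z$ with Young's inequality. This gives $\tfrac{d}{dt}\E\|z\|^2\ge -C\E\|z\|^2-C\,\E\|Z\|^2$, whence $\E\int_{\mathcal M}|z(0)|^2\le C\big(\E\int_{\mathcal M}|z(t)|^2+\E\int_Q|Z|^2\big)$ for every $t$. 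Integrating this over a time slab about $T/2$, and using that the weight $s^3\varphi^3e^{2s\theta\varphi}$ is bounded below on the slab and above on $G_0\cap\mathcal M$, I bound $\E\int_{\mathcal M}|z(0)|^2$ by the observation integral $\E\int_0^T\int_{G_0\cap\mathcal M}|z|^2\,dt$, the term $\E\int_Q|Z|^2$, and the two boundary contributions; the $t=0$ contribution is absorbed, while the $t=T$ one, $h^{-2}\E\int_{\mathcal M}e^{2s\theta\varphi}|z|^2|_{t=T}$, is converted into $\phi(h)\,\E\int_{\mathcal M}|z|^2|_{t=T}$ by taking $\tau$ of the maximal order $\varepsilon_0/(h\max_{[0,T]}\theta)\sim 1/h$ allowed by $sh\le\varepsilon_0$ and using \eqref{condphi}, since then $h^{-2}e^{2s\theta\varphi}|_{t=T}\le h^{-2}e^{-\kappa/h}\le C\phi(h)$.

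The main obstacle lies exactly in this last balancing act. Forcing the factor $\phi(h)$ onto the terminal data demands $\tau\sim 1/h$, and with such a large parameter the Carleman weight $s^3\varphi^3e^{2s\theta\varphi}$ differs by factors of order $e^{c/h}$ between the center $t=T/2$ and the edges of any fixed time slab, as well as between the observation region and the rest of $\mathcal M$; a crude passage from the weighted interior integral to the unweighted $\E\int_{\mathcal M}|z(0)|^2$ through the spatial minimum of the weight would therefore destroy the uniformity of the observation constant. The delicate point is thus to keep the constant in front of $\E\int_0^T\int_{G_0\cap\mathcal M}|z|^2\,dt$ uniform in $h$ while the terminal factor still decays like $\phi(h)$; this forces a careful choice of the frozen parameter $\lambda$, of $\tau\sim1/h$, of $\delta\in(0,1/2)$ and of $h_0$, together with the sharp bounds on $\theta$, $\varphi$ and $s^3\varphi^3e^{2s\theta\varphi}$ recorded for the weight function, and is where the bulk of the technical effort is concentrated.
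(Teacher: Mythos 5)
Your overall architecture coincides with the paper's: you apply Theorem \ref{theo:Carleman} to $z$ with $f=\sum_i a_{1i}A_iD_iz-a_2z-a_3Z$ and $g=Z$, absorb the drift and zero-order contributions by enlarging $\tau$ proportionally to the coefficient norms, treat the $L^{n^\ast}$ potential $a_2$ by H\"older plus the discrete Sobolev inequality of Theorem \ref{teo:discreteSobolev} applied to $e^{s\varphi}z$ (your exponent $2n^\ast/(n^\ast-2)$ is exactly the paper's $p^\ast=2q$), and derive an energy estimate by It\^o plus Gronwall, as in \eqref{secinequalityObseq:1}, to transfer the interior information onto $\E\int_{\mathcal M}|z(0)|^2$. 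All of that is sound and matches the paper.

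The gap is in the final parameter balancing, and it is exactly the obstacle you flag without resolving. You propose to take $\tau\sim 1/h$ (the maximal value allowed by $sh\le\varepsilon_0$ for a fixed $\delta$) so that $h^{-2}e^{2s\varphi}\big|_{t=T}\le C\phi(h)$. But then $s=\tau\theta\sim 1/h$ on all of $[0,T]$, and since $\varphi<0$ the lower bound \eqref{eq:secobservability1} on the slab $[T/4,3T/4]$ degenerates like $e^{-c/h}$; moving it to the other side produces an observation constant of order $e^{c/h}$, which is not uniform in $h$, so the argument does not close. The paper's resolution is the opposite scaling: $\tau$ is \emph{fixed}, of order $T+T^2$ plus terms in the coefficient norms and independent of $h$, while the regularization parameter $\delta$ in \eqref{theta-delta} is taken proportional to $h$ (namely $\delta=h\delta_1/h_1$, so that $\tau h/(\delta T^2)=\varepsilon_0$). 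Because $T^{-2}\le\theta\le 16/(3T^2)$ on $[T/4,3T/4]$ uniformly in $\delta$, the Carleman weight is uniformly comparable to $1$ on that slab and the observation constant $e^{\mathcal{C}\tau/T^2}$ stays bounded as $h\to0$; only at the endpoints does $\theta(0)=\theta(T)\sim(\delta T^2)^{-1}\sim 1/h$ blow up, which is precisely what yields $h^{-2}e^{2s\varphi}\big|_{t=0,T}\le h^{-2}e^{-\mathcal{C}\varepsilon_0/h}\le C\phi(h)$ via \eqref{condphi}. Without this decoupling of the endpoint and interior behaviour of the weight in time --- achieved through $\delta\sim h$ rather than $\tau\sim 1/h$ --- the uniformity you correctly identify as the delicate point cannot be recovered, so the proposal as written does not yield the theorem.
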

\begin{proof} First, using the Holder inequality, we obtain that
$$
\E\int_{Q}(e^{s\varphi}a_2z)^2
\; dt\leq C\E\int_0^T\left(\int_{\mathcal{M}}|a_2|^{2r}\right)^{\frac{2}{2r}}\left(\int_{\mathcal{M}}|e^{s\varphi}z|^{2q}\right)^{\frac{2}{2q}}\;dt,
$$
where $1/r+1/q=1$. Taking $n^{\ast}=2r$, using Theorem \ref{teo:discreteSobolev} with $p=2$ and $p^{\ast}=2q$, and the rule of product with respect to the differential operator, we have that $n^{\ast}$ satisfies \eqref{conditionofn*} and
\begin{align*}
    &\mathds{E}\int_{Q}(e^{s\varphi}a_2z)^2dt
    \leq C\|a_2\|^2_{L^{\infty}_{\mathds{F}}(0,T;L^{n^{\ast}}(\mathcal{M}))}\E\int_0^T\left(\sum_{i=1}^{n}\int_{\mathcal{M}_i^{\ast}}|D_i(e^{s\varphi}z)|^{2}+\int_{\mathcal{M}}|e^{s\varphi}z|^{2}\right)\,dt\\
    \leq& C\|a_2\|^2_{L^{\infty}_{\mathds{F}}(0,T;L^{n^{\ast}}(\mathcal{M}))}\E\int_0^T\left(\sum_{i=1}^{n}\int_{\mathcal{M}_i^{\ast}}|D_i(e^{s\varphi})|^2|A_iz|^2+|A_i(e^{s\varphi})|^2|D_iz|^2+\int_{\mathcal{M}}e^{2s\varphi}|z|^2\right)\,dt
\end{align*}
From \eqref{prop:weight}, we notice that $|D_i(e^{s\varphi})|^2\leq \lambda^2s^2\varphi^2|\nabla\psi|^2e^{2s\varphi}+e^{2s\varphi}\mathcal{O}((sh)^2)$ and $|A_i(e^{s\varphi})|^2\leq e^{2s\varphi}+e^{2s\varphi}\mathcal{O}((sh)^2)$, thus it follows that
\begin{align*}
    \mathds{E}\int_{Q}(e^{s\varphi}a_2z)^2dt
    &\leq C\|a_2\|^2_{L^{\infty}_{\mathds{F}}(0,T;L^{n^{\ast}}(\mathcal{M}))}\E\int_0^T\left(\sum_{i=1}^{n}\int_{\mathcal{M}_i^{\ast}}\lambda^2s^2\varphi^2e^{2s\varphi}|A_iz|^2+e^{2s\varphi}|D_iz|^2\right.\\
    &\left.\int_{\mathcal{M}}e^{2s\varphi}|z|^2\right)\,dt+\mathcal{O}_{\lambda}((sh)^2)\sum_{i=1}^{n}\mathds{E}\int_{Q_i^{\ast}}e^{2s\varphi}|A_iz|^2+e^{2s\varphi}|D_iz|^2\,dt
\end{align*}
We can observe that thanks to \eqref{eq:promedioinequality} and \eqref{eq:int:ave},
\begin{align*}
    \E\int_{Q_i^\ast}\varphi^2e^{2s\varphi}|A_iz|^2\,dt\leq \E\int_{Q_i^{\ast}}\varphi^2e^{2s\varphi}A_i|z|^2\,dt\leq \E\int_{Q}\varphi^2e^{2s\varphi}|z|^2\,dt+\E\int_{Q}\mathcal{O}((sh)^2)|z|^2\,dt
\end{align*}
where we used that $z=0$ on $\partial_i Q$. Therefore, we arrive at the following inequality
\begin{align*}
    \mathds{E}\int_{Q}(e^{s\varphi}a_2z)^2dt
    \leq& C\|a_2\|^2_{L^{\infty}_{\mathds{F}}(0,T;L^{n^{\ast}}(\mathcal{M}))}\mathds{E}\left(\int_{Q}(1+\lambda^2s^2\varphi^2)e^{2s\varphi}|z|^2\,dt+\sum_{i=1}^n\int_{Q_i^{\ast}}e^{2s\varphi}|D_iz|^2\,dt\right)\,\\
    &+\mathcal{O}_{\lambda}((sh)^2)\mathds{E}\left(\int_{Q}e^{2s\varphi}|z|^2\,dt+\sum_{i=1}^n\int_{Q_i^{\ast}}e^{2s\varphi}|D_iz|^2\,dt\right).
\end{align*}
Now, let
\begin{align*}
    \mathcal{C}_1\triangleq\,\sum_{i=1}^{n} \|a_{1i}\|_{L^{\infty}_{\mathds{F}}(0,T;W_{h}^{1,\infty}(\mathcal{M}))},\,\,\mathcal{C}_2\triangleq\,\|a_{2}\|_{L^{\infty}_{\mathds{F}}(0,T;L_{h}^{n^\ast}(\mathcal{M}))},\,\, \mathcal{C}_3\triangleq\,\|a_{3}\|_{L^{\infty}_{\mathds{F}}(0,T;L_{h}^{\infty}(\mathcal{M}))}.
\end{align*}
By Theorem \ref{theo:Carleman} and $z$ solution of the system \eqref{systemadjstohocastic}, it deduces that   
   \begin{equation*}
 \begin{split} 
 &\sum_{i=1}^{n}\E\int_{Q_{i}^{\ast}}s\varphi e^{2s\varphi}\,|D_{i}z|^{2}\,dt+\sum_{i=1}^{n}\E\int_{Q}s\varphi e^{2s\varphi}|A_iD_iz|^2\,dt+\E\int_{Q}s^{3}\varphi^{3}e^{2s\varphi}\,|z|^{2}\,dt\\
&\leq\,\mathcal{C}\left(\E\int_{0}^T\int_{G_{0}\cap \mathcal{M}}s^{3}\varphi^{3}e^{2s\varphi}\,|z|^{2}\,dt+\mathcal{C}_1^2\sum_{i=1}^{n}\E\int_{Q}e^{2s\varphi}|A_iD_i(z)|^2\,dt\right.\\
&+\left.\mathcal{C}_{2}^2\E\left(\int_{Q}(1+s^2\varphi^2)e^{2s\varphi}|z|^2\,dt+\sum_{i=1}^n\int_{Q_i^{\ast}}e^{2s\varphi}|D_iz|^2\,dt\right)+\E\int_{Q}e^{2s \varphi}(\mathcal{C}_3^2+s^2)|Z|^2\,dt\right.\\
&\left.+ h^{-2}\left.\E\int_{\mathcal{M}}e^{2s\varphi}|z|^2\right|_{t=0}+h^{-2}\left.\E\int_{\mathcal{M}}e^{2s\varphi}|z|^2\right|_{t=T}\right).
\end{split}
\end{equation*}
with $s=\tau\theta$ for $\tau\geq \tau_{0}(T+T^2)$, $0<h\leq h_{0}$ and $\tau h (\max_{[0,T]}{\theta})\leq 1$.

As $1\leq \mathcal{C}\theta T^2$ it suffices to have 
\begin{equation}\label{eq:parameterstau}
\tau\geq T^2(\mathcal{C}_1^{2}+\mathcal{C}_{2}^{2/3})
\end{equation}
to obtain
\begin{equation}\label{eq:secobservabilityP}
 \begin{split}
 \sum_{i=1}^{n}&\E\int_{Q_{i}^{\ast}}\tau\theta\varphi e^{2s\varphi}\,|D_{i}z|^{2}\,dt+\sum_{i=1}^{n}\E\int_{Q}\tau\theta\varphi e^{2s\varphi}|A_iD_iz|^2\,dt\\
 &+\E\int_{Q}\tau^3\theta^{3}\varphi^{3}e^{2s\varphi}\,|z|^{2}\,dt\leq\,\mathcal{C}\left(\E\int_{0}^T\int_{G_{0}\cap \mathcal{M}}\tau^{3}\theta^{3}\varphi^{3}e^{2s\varphi}\,|z|^{2}\,dt\right.\\
&+\E\int_{Q}\tau^{2}\theta^2e^{2s\varphi}|Z|^2\,dt+\left.+ h^{-2}\left.\E\int_{\mathcal{M}}e^{2s\varphi}|z|^2\right|_{t=0}+h^{-2}\left.\E\int_{\mathcal{M}}e^{2s\varphi}|z|^2\right|_{t=T}\right),
\end{split}
\end{equation}
where we choose $\tau_{1}\geq \tau_0$ sufficiently large to have \eqref{eq:parameterstau} for $\tau \geq \tau_{1}(T+T^2+T^2(\mathcal{C}_1^{2}+\mathcal{C}_{2}^{2/3}))$.

Recalling the definition of $\theta$, we have $\theta(t)\geq\theta(\frac{T}{2})\geq T^{-2}$, since $0<\delta<\frac{1}{2}$, and $\theta\leq \theta(\frac{T}{4})\leq \frac{16}{3T^2}$ for $t\in [\frac{T}{4},\frac{3T}{4}]$. Then
\begin{equation}\label{eq:secobservability1}
\begin{split}
    \E\int_{Q}\tau^3\theta^{3}\varphi^{3}e^{2s\varphi}\,|z|^{2}\,dt\geq &\E\int_{\frac{T}{4}}^{\frac{3T}{4}}\int_{\mathcal{M}}\tau^{3}\theta^{3}\varphi^{3}e^{2s\varphi}\,|z|^{2}\,dt\\
    \geq &\mathcal{C} e^{-\mathcal{C}\tau T^{-2}}\E\int_{\frac{T}{4}}^{\frac{3T}{4}}\int_{\mathcal{M}}\,|z|^{2}\,dt.
\end{split}
\end{equation}
On the other hand, notice that $d(|z|^2)=2zdz+|dz|^2=2zdz+Z^2dt$. Hence, for any $0\leq t_1 \leq t_2 \leq T$,
\begin{equation}\label{eq:estimateofz(0)}
    \begin{split}
        \E\int_{\mathcal{M}}|z(t_2)|^2-\E\int_{\mathcal{M}}|z(t_1)|^2=&\E\int_{t_1}^{t_2}\int_{\mathcal{M}}\{2z[-\sum_{i=1}^{n} D_i(\gamma_i D_iz)+\sum_{i=1}^{n}a_{1i}A_iD_i (z)\\
        &-a_2z-a_3Z]+|Z|^2\}\,dt.
    \end{split}
\end{equation}
By integration by parts \eqref{eq:int:dif} and notice that $z=0$ on $\partial_i \mathcal{M}$, we obtain that for each $i=1,...,n$
\begin{equation}\label{eq:estimateupDz}
\begin{split}
    \E\int_{t_1}^{t_2}\int_{\mathcal{M}} zD_i(\gamma_iD_iz)\,dt=&-\E\int_{t_1}^{t_2}\int_{\mathcal{M}_{i}^{\ast}}\gamma_i|D_iz|^2\,dt+\E\int_{t_1}^{t_2}\int_{\partial_{i}\mathcal{M}}zt_{r}^{i}(\gamma_iD_iz)\,dt\\
    =&-\E\int_{t_1}^{t_2}\int_{\mathcal{M}_{i}^{\ast}}\gamma_i|D_iz|^2\,dt\leq 0.
\end{split}
\end{equation}
In turn, let us focus on the term $\displaystyle\mathcal{I}=\E\int_{t_1}^{t_2}\int_{\mathcal{M}} a_{1i}z\,A_iD_iz\,dt$. We note that using \eqref{eq:int:dif}, that $z=0$ on $\partial \mathcal{M}$, and then \eqref{eq:difference:product}, we have
\begin{equation*}
\begin{split}
    \mathcal{I}=&-\E\int_{t_1}^{t_2}\int_{\mathcal{M}_{i}^{\ast}}D_i(a_{1i}z)A_iz\,dt+\E\int_{t_1}^{t_2}\int_{\partial_{i}\mathcal{M}}a_{1i}z\,t_{r}^{i}(A_iz)\nu_{i}\,dt\\
     =&-\E\int_{t_1}^{t_2}\int_{\mathcal{M}_{i}^{\ast}}D_i(a_{1i})|A_iz|^2\,dt-\frac{1}{2}\E\int_{t_1}^{t_2}\int_{\mathcal{M}_{i}^{\ast}}A_i(a_{1i})D_{i}(|z|^{2})\,dt,\\
\end{split}
\end{equation*}
where the last integral above is rewritten according \eqref{eq:derivadacuadrado}. Furthermore, applying \eqref{eq:promedioinequality} on the first integral above and integration for parts with respect to difference operator for the second integral, we see that for each $i=1,...,n$
\begin{equation}\label{eq:estiamteupDAz}
\begin{split}
     \mathcal{I}
     \geq& -\E\int_{t_1}^{t_2}\int_{\mathcal{M}_{i}^{\ast}}D_i(a_{1i})A_i(|z|^2)\,dt+\frac{1}{2}\E\int_{t_1}^{t_2}\int_{\mathcal{M}}D_{i}A_i(a_{1i})|z|^2\,dt\\
     &-\E\int_{\partial_i \mathcal{M}}|z|^2t_{r}^{i}(A_ia_{1i})\nu_{i}\,dt\\
     %\geq& -\frac{1}{2}\E\int_{t_1}^{t_2}\int_{\mathcal{M}}D_iA_i(a_{1i})\,|z|^2\,dt+\frac{h}{2}\int_{\partial_i\mathcal{M}}|z|^2t_{r}^{i}(D_ia_{1i})\,dt\\
     \geq&  -\frac{1}{2}\E\int_{t_1}^{t_2}\int_{\mathcal{M}}D_iA_i(a_{1i})\,|z|^2\,dt,
\end{split}
\end{equation}
where the last line is obtained after an integration by parts for the average operator and we used that $z=0$ on $\partial_i Q$. Therefore, replacing \eqref{eq:estimateupDz} and \eqref{eq:estiamteupDAz} on \eqref{eq:estimateofz(0)}, and using Young's inequality, it holds that
\begin{equation*}
    \begin{split}
        \E&\int_{\mathcal{M}}|z(t_2)|^2\,dt-\E\int_{\mathcal{M}}|z(t_1)|^2\,dt\\
        \geq&\E\int_{t_1}^{t_2}\int_{\mathcal{M}}\left[-\left(\sum_{i=1}^{n}D_iA_i(a_{1i})+a_2\right)\,|z|^2-2a_3zZ+Z^2\right]\,dt\\
        \geq&-\left(\mathcal{C}_1+\mathcal{C}_2+\frac{\mathcal{C}_3^2}{2}\right)\E\int_{t_1}^{t_2}\int_{\mathcal{M}}|z|^2\,dt.
    \end{split}
\end{equation*}
By the Gronwall inequality, it holds that
\begin{equation}\label{secinequalityObseq:1}
    \E\int_{\mathcal{M}}|z(t_1)|^2\,dt\leq\, \mathcal{C}\E \int_{\mathcal{M}}|z(t_2)|^2\,dt,\quad\quad0,\leq t_1\leq t_2 \leq T.
\end{equation}
Thus, using \eqref{secinequalityObseq:1} on \eqref{eq:secobservability1}, it follows that
\begin{equation}\label{eq:secobservability2}
    \E\int_{Q}\tau^3\theta^3\varphi^3e^{2s\varphi}|z|^2\,dt\geq\, \mathcal{C}e^{-\mathcal{C}\frac{\tau}{T^2}}\E\int_{\mathcal{M}}|z(0)|^2\,dt.
\end{equation}
Noting that $\theta(T)=\theta(0)\geq \frac{2}{3}(\delta T^2)^{-1}$ and using the inequality \eqref{secinequalityObseq:1}, then
\begin{equation}\label{eq:secobservability3}
    \begin{split}
    h^{-2}\E&\left.\int_{\mathcal{M}}e^{2s\varphi}|z|^2\right|_{t=0}+h^{-2}\left.\E\int_{\mathcal{M}}e^{2s\varphi}|z|^2\right|_{t=T}\\
        \leq& \mathcal{C}h^{-2}e^{-\frac{\mathcal{C}}{\delta T^2}}\left[\left.\E\int_{\mathcal{M}}|z|^2\right|_{t=0}+\left.\E\int_{\mathcal{M}}|z|^2\right|_{t=T}\right]\leq  \mathcal{C}h^{-2}e^{-\frac{\mathcal{C}}{\delta T^2}}\left.\E\int_{\mathcal{M}}|z|^2\right|_{t=T}.
    \end{split}
\end{equation}
Moreover, $\theta(t)\geq \theta(\frac{T}{2})\geq T^{-2}$, we conclude that
\begin{equation}\label{eq:secobservability4}
\begin{split}
    \E\int_{G_{0}\cap\mathcal{M}}\theta^3e^{2s\varphi}z^2\, dt&+\E\int_{Q}\theta^2e^{2s\varphi}|Z|^2\,dt\\
    \leq & e^{-\frac{\mathcal{C}}{T^2}}\left[ \E\int_{G_{0}\cap\mathcal{M}}z^2\, dt+\E\int_{Q}|Z|^2\,dt\right].
\end{split}
\end{equation}
Combining \eqref{eq:secobservabilityP} and \eqref{eq:secobservability2}, we have
\begin{equation*}
    \begin{split}
        \E\int_{\mathcal{M}}&|z|^2(0)\leq \mathcal{C}e^{\mathcal{C}\tau T^{-2}}\left(\E\int_{0}^T\int_{G_{0}\cap \mathcal{M}}\theta^{3}\varphi^{3}e^{2s\varphi}\,|z|^{2}\,dt+\E\int_{Q}\theta^2e^{2s\varphi}|Z|^2\,dt\right.\\
&\left.+ h^{-2}\left.\E\int_{\mathcal{M}}e^{2s\varphi}|z|^2\right|_{t=0}+h^{-2}\left.\E\int_{\mathcal{M}}e^{2s\varphi}|z|^2\right|_{t=T}\right).
\end{split}
\end{equation*}
Moreover, using \eqref{eq:secobservability3} and \eqref{eq:secobservability4}, we see that
\begin{equation*}
\begin{split}
\E\int_{\mathcal{M}}&|z|^2(0)\\
\leq & \mathcal{C} e^{\mathcal{C}\frac{\tau}{T^2}}\left(\E\int_{0}^T\int_{G_{0}\cap \mathcal{M}}\,|z|^{2}\,dt+\E\int_{Q}|Z|^2\,dt\right)+h^{-2}e^{-\frac{\mathcal{C}}{\delta T^{2}}+\frac{\mathcal{C}}{T^2}}\left.\E\int_{\mathcal{M}}|z|^2\right|_{t=T}.
    \end{split}
\end{equation*}
For $0<\delta<\delta_{1}\leq \delta_0$, where $\delta_1$ is sufficiently small, we obtain 
\begin{equation}\label{eq:fiobser}
\begin{split}
    \E\int_{\mathcal{M}}|z|^2(0)\leq \mathcal{C} e^{\mathcal{C}\frac{\tau}{T^2}}&\left(\E\int_{0}^T\int_{G_{0}\cap \mathcal{M}}\,|z|^{2}\,dt+\E\int_{Q}|Z|^2\,dt\right)\\
    &+h^{-2}e^{-\frac{\mathcal{C}}{\delta T^{2}}}\left.\E\int_{\mathcal{M}}|z|^2\right|_{t=T}.
\end{split}
\end{equation}

We recall the conditions of Theorem \ref{theo:Carleman}
\begin{align}
    \frac{\tau h}{\delta T^2}\leq \varepsilon_0,\quad&\quad h\leq h_0.
\end{align}
They need to be fulfilled along with $\delta\leq \delta_1$.
We fix $\tau=\tau_0(T+T^2+T^2(\mathcal{C}_3+\mathcal{C}_1^2+\mathcal{C}_4^{2}+\mathcal{C}_2^{2/3}))$ with $\tau_0$ as chosen in Theorem \ref{theo:Carleman}. We define $h_1$ through 
\begin{equation*}
    h_{1}=\frac{\varepsilon_0}{\tau_0}\delta_{1}(1+\frac{1}{T}+\mathcal{C}_1^2+\mathcal{C}_4^{2}+\mathcal{C}_2^{2/3})^{-1},
\end{equation*}
which gives $\frac{\tau h_1}{\delta_{1}T^2}=\varepsilon_{0}$. We choose $h\leq \min{(h_0,h_1)}$, and $\delta=h\delta_1/h_1\leq \delta_{1}$ we then find $\frac{\tau h}{\delta T^2}=\varepsilon_{0}$.

As $\tau/(T^2\delta)=\epsilon_0/h$, we obtain from \eqref{eq:fiobser}
\begin{equation*}
     \E\int_{\mathcal{M}}|z|^2(0)\leq  \mathcal{C}\left(\E\int_{0}^T\int_{G_{0}\cap \mathcal{M}}\,|z|^{2}\,dt+\E\int_{Q}|Z|^2\,dt\right)+h^{-2}e^{-\frac{\mathcal{C}\varepsilon_0}{h}}\left.\E\int_{\mathcal{M}}|z|^2\right|_{t=T}.
\end{equation*}
This completes the proof of Theorem \ref{Teo:Observability inequality}.
\end{proof}
\subsection{The controllability for the stochastic semi-discrete parabolic equation}
Finally, our third and last main result establishes the \textit{$\phi$-null controllability} property for system \eqref{systemofcontrolDiscrete}.
\begin{theorem}\label{theo:nullcontrolability}
    There exist $C$ and $h_0$, for all $h\leq h_0$, there exist $(u,v)\in L^2_{\mathds{F}}(0,T;L^2(G_{0}\cap\mathcal{M}))\times L^2_{\mathds{F}}(0,T;L^2(\mathcal{M}))$ such that the solution to $\eqref{systemofcontrolDiscrete}$ satisfies
    \begin{equation*}
        \E\int_{Q}|v|^2\,dt+\E\int_0^T\int_{G_0\cap \mathcal{M}}|u|^2\,dt\leq C \E\int_{\mathcal{M}}|y_0|^2,
    \end{equation*}
and
\begin{equation*}
    \E\int_{\mathcal{M}}|y(T)|^2\leq C \phi(h)\E\int_{\mathcal{M}}|y_0|^2.
\end{equation*}
for any $\phi$ satisfying \eqref{condphi}.
\end{theorem}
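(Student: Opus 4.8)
The plan is to obtain both asserted inequalities simultaneously through the penalized Hilbert Uniqueness Method (penalized HUM), using the relaxed observability inequality of Theorem \ref{Teo:Observability inequality} as the coercivity mechanism. The starting point is the duality identity between the controlled forward system \eqref{systemofcontrolDiscrete} and its backward adjoint \eqref{systemadjstohocastic}. Applying It\^o's formula to $t\mapsto\int_{\mathcal{M}}y(t)\,z(t)$, taking expectation, and using the integration-by-parts formulas of Proposition \ref{prop:integralbyparts} to cancel the diffusive, transport, and zero-order contributions (which is precisely how \eqref{systemadjstohocastic} was constructed as the dual of \eqref{systemofcontrolDiscrete}), I would establish
\begin{equation*}
\E\int_{\mathcal{M}} y(T)\,z_T-\E\int_{\mathcal{M}} y_0\,z(0)=\E\int_0^T\!\!\int_{G_0\cap\mathcal{M}} u\,z\,dt+\E\int_Q v\,Z\,dt,
\end{equation*}
valid for every admissible pair $(u,v)$ and every final datum $z_T$, where $(z,Z)$ solves \eqref{systemadjstohocastic}. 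The stochastic cross-variation $dy\cdot dz=(a_3y+v)Z\,dt$ combines with the $-a_3Z$ drift of the adjoint to leave exactly the $vZ$ term above.

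Next I would define, on the Hilbert space $L^2_{\mathcal{F}_T}(\Omega;L^2_h(\mathcal{M}))$, the penalized functional
\begin{equation*}
\mathcal{J}_\phi(z_T)\triangleq\frac12\E\int_0^T\!\!\int_{G_0\cap\mathcal{M}}|z|^2\,dt+\frac12\E\int_Q|Z|^2\,dt+\frac{\phi(h)}2\E\int_{\mathcal{M}}|z_T|^2+\E\int_{\mathcal{M}} y_0\,z(0),
\end{equation*}
which is continuous and strictly convex. Coercivity is the crux: bounding the linear term by $\|y_0\|\,\|z(0)\|$ and invoking Theorem \ref{Teo:Observability inequality} to control $\|z(0)\|^2$ by the three quadratic terms of $\mathcal{J}_\phi$ (with the $h$-uniform constant $C$) yields $\mathcal{J}_\phi(z_T)\to\infty$ as $\|z_T\|\to\infty$. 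Hence $\mathcal{J}_\phi$ attains its minimum at a unique $\hat z_T$, with associated adjoint state $(\hat z,\hat Z)$. Writing the Euler--Lagrange equation and substituting the duality identity, I would set the controls $u\triangleq\hat z|_{G_0\cap\mathcal{M}}$ and $v\triangleq\hat Z$, and read off that the corresponding forward solution satisfies $y(T)=-\phi(h)\,\hat z_T$ in $L^2_{\mathcal{F}_T}(\Omega;L^2_h(\mathcal{M}))$.

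Finally, both inequalities follow from a single a priori bound. Comparing with the trivial competitor gives $\mathcal{J}_\phi(\hat z_T)\le\mathcal{J}_\phi(0)=0$, whence $\tfrac12\Lambda(\hat z_T)\le\|y_0\|\,\|\hat z(0)\|$, where
\begin{equation*}
\Lambda(\hat z_T)\triangleq\E\int_0^T\!\!\int_{G_0\cap\mathcal{M}}|\hat z|^2\,dt+\E\int_Q|\hat Z|^2\,dt+\phi(h)\E\int_{\mathcal{M}}|\hat z_T|^2;
\end{equation*}
applying observability once more as $\|\hat z(0)\|\le C^{1/2}\Lambda(\hat z_T)^{1/2}$ and cancelling yields $\Lambda(\hat z_T)\le 4C\,\E\int_{\mathcal{M}}|y_0|^2$. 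The first two summands of $\Lambda$ deliver the control-cost estimate $\E\int_Q|v|^2+\E\int_0^T\!\int_{G_0\cap\mathcal{M}}|u|^2\le 4C\,\E\int_{\mathcal{M}}|y_0|^2$, while the third gives $\E\int_{\mathcal{M}}|\hat z_T|^2\le 4C\phi(h)^{-1}\E\int_{\mathcal{M}}|y_0|^2$, so that $\E\int_{\mathcal{M}}|y(T)|^2=\phi(h)^2\E\int_{\mathcal{M}}|\hat z_T|^2\le 4C\phi(h)\,\E\int_{\mathcal{M}}|y_0|^2$. I expect the main obstacle to be the coercivity step, namely verifying that the observability constant, and hence the constant $C$ in both conclusions, is genuinely independent of $h$ (all $h$-dependence having been isolated in the $\phi(h)$ penalization), together with the careful It\^o and integration-by-parts bookkeeping on the primal and dual meshes needed to make the duality identity exact.
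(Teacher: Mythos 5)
Your proposal is correct and follows essentially the same route as the paper: the penalized HUM functional on $L^2_{\mathcal{F}_T}(\Omega;L^2_h(\mathcal{M}))$, coercivity via the relaxed observability inequality of Theorem \ref{Teo:Observability inequality}, the Euler--Lagrange equation combined with the duality identity to read off the controls and $y(T)=\pm\phi(h)\hat z_T$, and observability again for the a priori bound. The only (immaterial) differences are a sign convention in the linear term of the functional and your use of the comparison $\mathcal{J}_\phi(\hat z_T)\le\mathcal{J}_\phi(0)=0$ where the paper instead tests the optimality condition at the minimizer to get the corresponding identity as an equality.
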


\begin{proof} For any $y_{0} \in L_{\mathcal{F}_0}^{2}(\Omega;L^2_{h}(\mathcal{M}))$ and $\phi(h)$ that satisfies \eqref{condphi}, define a functional $J_{\varepsilon}$ on $L_{\mathcal{F}_{T}}^{2}\left(\Omega; L_{h}^{2}(\mathcal{M})\right)$ for any value of the penalization parameter $\varepsilon>0$ as follows:
$$
J_{\varepsilon}\left(z_{T}\right)\triangleq\frac{1}{2} \E \int_{Q}|Z|^{2} d t+\frac{1}{2} \E \int_{0}^{T} \int_{G_{0} \cap \mathcal{M}}|z|^{2} d t+\frac{\phi(h)}{2} \E\left\|z_{T}\right\|_{L_{h}^{2}(\mathcal{M})}^{2}-\E\left\langle y_{0}, z(0)\right\rangle_{L_{h}^{2}(\mathcal{M})}
$$
where $\varepsilon=\phi(h)$, and $(z, Z)$ is the solution to \eqref{systemadjstohocastic} with final value $z_{T}$. It is a simple matter to see that the functional $J(\cdot)$ is continuous and convex. Now we prove that it is coercive. By Young's inequality and Theorem \ref{Teo:Observability inequality}, we have
$$
\begin{aligned}
J_{\varepsilon}\left(z_{T}\right) \geq & \frac{1}{2} \E \int_{Q}|Z|^{2} d t+\frac{1}{2} \E \int_{0}^{T} \int_{G_{0} \cap \mathcal{M}}|z|^{2} d t+\frac{\phi(h)}{2}\left\|z_{T}\right\|_{L_{h}^{2}(\mathcal{M})}^{2} \\
& -\frac{1}{\epsilon}\|z(0)\|_{L_{h}^{2}(\mathcal{M})}^{2}-\
\epsilon\left\|y_{0}\right\|_{L_{h}^{2}(\mathcal{M})}^{2} \\
\geq & \frac{1}{4} \E \int_{Q}|Z|^{2} d t+\frac{1}{4} \E \int_{0}^{T} \int_{G_{0} \cap \mathcal{M}}|z|^{2} d t+\frac{\phi(h)}{4}\left\|z_{T}\right\|_{L_{h}^{2}(\mathcal{M})}^{2}-4C\left\|y_{0}\right\|_{L_{h}^{2}(\mathcal{M})}^{2}
\end{aligned}
$$
where $\epsilon=4C$ with $C$ a constant given by Theorem \ref{Teo:Observability inequality}. Then, $J(\cdot)$ is coercive. Therefore, $J_{\varepsilon}(\cdot)$ admits a unique minimizer $z_{T}^{*}$. Denote $\left(z^{*}, Z^{*}\right)$ the solution to \eqref{systemadjstohocastic} with final value $z_{T}^{*}$. Then we have
$$
\E \int_{Q} Z Z^{*} d t+\E \int_{0}^{T} \int_{G_{0} \cap \mathcal{M}} z z^{*} d t+\phi(h) \E\left\langle z_{T}, z_{T}^{*}\right\rangle_{L_{h}^{2}(\mathcal{M})}-\E\left\langle y_{0}, z(0)\right\rangle_{L_{h}^{2}(\mathcal{M})}=0
$$
for any $z_{T}$, with the associated solution $(z, Z)$ to \eqref{systemadjstohocastic}. We set $u^{*}=-z^{*} \chi_{G_{0} \cap \mathcal{M}}$ and $v^{*}=-Z^{*}$. We claim that \eqref{systemadjstohocastic} with $u=u^{*}$ and $v=v^{*}$ satisfy our needs. By the duality of \eqref{systemadjstohocastic} with \eqref{systemofcontrolDiscrete}, we have
$$
\E \int_{\mathcal{M}} y(T) z_{T}-\E \int_{\mathcal{M}} y_{0} z(0)=\E \int_{Q} v Z d t+\E \int_{0}^{T} \int_{G_{0} \cap \mathcal{M}} z u d t,
$$
then $y(T)=\phi(h) z_{T}^{*}$. On the other hand,
\begin{equation}\label{eq:proofcont1}
\begin{split}
& \E \int_{Q}\left|Z^{*}\right|^{2} d t+\E \int_{0}^{T} \int_{G_{0} \cap \mathcal{M}}\left|z^{*}\right|^{2} d t+\phi(h) \E\left\|z_{T}^{*}\right\|_{L_{h}^{2}(\mathcal{M})}^{2}=\E\left\langle y_{0}, z^{*}(0)\right\rangle_{L_{h}^{2}(\mathcal{M})} \\
& \leq \frac{1}{4 \epsilon} \E \int_{\mathcal{M}}\left|y_{0}\right|^{2} d t+\epsilon \E \int_{\mathcal{M}}\left|z^{*}(0)\right|^{2} d t.
\end{split}
\end{equation}
Notice that
\begin{equation}\label{eq:proofcont2}
\E \int_{\mathcal{M}}\left|z^{*}(0)\right|^{2} d t \leq C\left(\E \int_{Q}\left|Z^{*}\right|^{2} d t+\E \int_{0}^{T} \int_{G_{0} \cap \mathcal{M}}\left|z^{*}\right|^{2} d t+\left.\phi(h) \E \int_{\mathcal{M}}\left|z^{*}\right|^{2}\right|_{t=T}\right).
\end{equation}
Using \eqref{eq:proofcont1} and \eqref{eq:proofcont2}, we have that
$$
\E \int_{Q}\left|Z^{*}\right|^{2} d t+\E \int_{0}^{T} \int_{G_{0} \cap \mathcal{M}}\left|z^{*}\right|^{2} d t \leq C \E \int_{\mathcal{M}}\left|y_{0}\right|^{2}
$$
and
$$
\phi(h)  \E\left\|z_{T}^{*}\right\|_{L_{h}^{2}(\mathcal{M})}^{2} \leq C \E \int_{\mathcal{M}}\left|y_{0}\right|^{2}.
$$

Hence
$$
\E \int_{Q}|v|^{2} d t+\E \int_{0}^{T} \int_{G_{0} \cap \mathcal{M}}|u|^{2} d t \leq C \E \int_{\mathcal{M}}\left|y_{0}\right|^{2}
$$
and
$$
\E\int_{\mathcal{M}}|y(T)|^2 \leq C \phi(h) \E \int_{\mathcal{M}}\left|y_{0}\right|^{2}.
$$
Then, we complete the proof.
\end{proof}
%%%%%%%%%%%%%%%%%%%%%%%%%%%%%%%%%%%%%%%%%%%%%%%%%%%%%%%%%%%%%%%%%%%%%%%%%%%%%%%%%%%%%%%%%%%%%%%%%%%%%%%%%%%%%%%%%%
\section{Concluding remarks}
We have established controllability results for the linear semi-discrete stochastic parabolic operator by using two controls. Unlike in the continuous setting, we have shown that null controllability cannot be expected for any initial data. In this context, we have proved the existence of bounded semi-discrete controls such that the norm of the solution approaches zero at a given time as the discrete parameter tends to zero. It is known that null-controllability is achieved using one control when the coefficients of the system only depend on the time variable \cite{Qi.Lu:2011}, and it could be interesting to explore a similar setting for the semi-discrete case.

\par In addition, the results in Theorem \ref{theo:nullcontrolability} can be extended to the more general case when we replace the first equation of \eqref{systemofcontrolDiscrete} by
\begin{equation*}
\tilde{\mathcal{P}}_h y = \left(\sum_{i=1}^{n} A_i D_i(a_{1i} y) + a_2 y + \chi_{G_0 \cap \mathcal{M}} u\right) \, dt + \sum_{k=1}^{m} \left(a_3^k y + v\right) \, dB_k(t),
\end{equation*}
where $a_3^k$ are functions in $L_{\mathds{F}}^{\infty}(0,T;L^{\infty}_{h}(\mathcal{M}))$ and $\{B_k(t)\}_{k=1,\cdots,m}$ are independent standard one-dimensional Brownian motions. This extension does not introduce significant analytical complexity, since the multidimensional nature of the Brownian motion only affects the notation through additional indices. The structure of Itô's formula remains essentially unchanged, and the stochastic integrals preserve their standard properties under the assumption of independence. Consequently, the Carleman inequality can still be derived in this setting, leading to the corresponding observability inequality for the adjoint system. Therefore, for the sake of clarity and without loss of generality, we restrict our analysis to the case $m = 1$. 
\par On the other hand, we note that the nonlinear semi-discrete case remains an open problem, particularly in the multidimensional setting and when the nonlinearity depends on both the state and its gradient. A significant advance has recently been made in the one-dimensional case with nonlinearity depending only on the state, as shown in \cite{wang2025nullcontrollabilitysemidiscretestochastic}, where the authors developed suitable Carleman estimates to address the $\phi$-null controllability of the nonlinear semi-discrete system.
In contrast, a more general class of nonlinearities—depending on both the state and its gradient—has been studied in the continuous setting in \cite{zhang2024newglobalcarlemanestimates}. In both works, the main tool is the derivation of appropriate Carleman estimates for the associated linearized systems. These results suggest that adapting this methodology to the semi-discrete setting could be a promising direction, although new challenges arise, especially due to the interaction between spatial discretization and time-dependent nonlinearities.

In \cite{baroun2023nullcontrollabilitystochasticparabolic}, a null controllability result is obtained for a similar system like \eqref{systemofcontrolcontinuos} although with the general convection term “$[a_5+B\cdot \nabla y]dB(t)$”. Then it could be interesting to analyze the semi-discrete behavior of the controls in that setting since the main tool used in that work is a new Carleman estimate. 
\par  Related to ongoing work is the consideration of the semi-discrete inverse problem for the stochastic parabolic operators; see, for instance, \cite{MR4729334} for the continuous setting and the references therein. Recently, Bin Wu, Ying Wang, and Zewen Wang published results on semi-discrete stochastic inverse problems in the one-dimensional setting \cite{Wu_2024}. Hence, the methodology developed in the proof of the Carleman estimate from Theorem \ref{theo:Carleman} could be adapted to extend the results from \cite{Wu_2024} to arbitrary dimensions, as their approach to obtaining the Carleman estimate follows a similar strategy. Moreover, the fundamental steps rely on the estimate of the discrete spatial operator applied on the respective Carleman weight function, and no significant difficulties are related with the temporal variable.
\section*{Acknowledgments}
This work is partially supported by the Math-Amsud project
CIPIF 22-MATH-01. R. Lecaros was partially supported by FONDECYT (Chile) Grant 1221892. A. A. P\'erez acknowledges the support of Vicerrector\'ia de Investigaci\'on y postgrado, Universidad del B\'io-B\'io, proyect IN2450902 and FONDECYT Grant 11250805. M. F. Prado gratefully acknowledges the support from the Institutional Scholarship Fund of the University of Valparaíso (FIB-UV).
\section*{Statements and Declarations}
The authors have no relevant financial or non-financial interests to disclose.
%%%%%%%%%%%%%%%%%%%%%%%%%%%%%%%%%%%%%%%%%%%%%%%%%%%%%%%%%%%%%%%%%%%%%%%%%%
\begin{appendices}

\section{Calculus results related to the weight functions}\label{sec:weight:function}
This section is devoted to present the results on discrete operations performed on the Carleman weight functions proved in \cite{AA:perez:2024} and \cite{boyer-2010-elliptic}, for continuous and discrete operators in arbitrary dimensions; respectively. Recall that our Carleman weight function is of the form $r\triangleq e^{s(t)\varphi(x)}$, for $s\geq 1$, with $\varphi(x)=e^{\lambda\psi(x)}-e^{\lambda K}<0$, where $\psi\in C^{k}$ for $k$ sufficiently large and $\lambda\geq 1$, $K>\left\| \psi\right\|_{\infty}$ and $s(t)\triangleq \tau\theta(t)$ for $\theta(t)\triangleq \left( (t+\delta T)(T+\delta T-t)\right)^{-1}$. We denote $\rho\triangleq r^{-1}$. Now, we consider two fundamental estimates for our weight function. 
\begin{lemma}[{\cite[Lemma 3.7]{boyer-2010-elliptic}}] \label{lem:derivative:wrt:x}
Let $\alpha$ and $\beta$ multi-indices. We have
\begin{equation}
\begin{split}
\partial^{\beta}(r\partial^{\alpha} \rho)=&|\alpha|^{|\beta|}(-s\varphi)^{|\alpha|}\lambda^{|\alpha+\beta|}(\nabla \psi)^{\alpha+\beta}+|\alpha||\beta|(s\varphi)^{|\alpha|}\lambda^{|\alpha+\beta|-1}\mathcal{O}(1)\\
&+s^{|\alpha|-1}|\alpha|(|\alpha|-1)\mathcal{O}_{\lambda}(1)=\mathcal{O}_{\lambda}(s^{|\alpha|}).
\end{split}
\end{equation}
Let $\sigma\in [-1,1]$ and $i=1,...,n$. We have
\begin{equation}
\begin{split}
\partial^{\beta}\left( r(x)(\partial^{\alpha}\rho)(x+\sigma h_{i})\right)=\mathcal{O}_{\lambda,\varepsilon}(s^{|\alpha|}(1+(sh)^{|\beta|}))e^{\mathcal{O}_{\lambda}(sh)}.
\end{split}
\end{equation}
Provided $sh\leq \varepsilon$ we have $\partial^{\beta}\left( r(x)(\partial^{\alpha}\rho)(x+\sigma h_{i})\right)=\mathcal{O}_{\lambda,\varepsilon}(s^{|\alpha|})$. The same expressions hold with $r$ and $\rho$ interchanged and with $s$ changed into $-s$.
\end{lemma}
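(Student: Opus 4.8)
The plan is to exploit the exact cancellation $r\rho = 1$ together with the Fa\`a di Bruno formula for the derivatives of the exponential $\rho = e^{-s\varphi}$, where $\varphi = e^{\lambda\psi} - e^{\lambda K}$. First I would compute $r\,\partial^{\alpha}\rho$: since $\rho = e^{g}$ with $g = -s\varphi$, each application of $\partial_i$ either differentiates the exponent, producing a factor $\partial_i g = -s\lambda(\partial_i\psi)e^{\lambda\psi}$, or differentiates an already generated polynomial factor. Multiplying by $r = \rho^{-1}$ cancels the exponential and leaves a polynomial in $s$, $\lambda$ and the derivatives of $\psi$. The dominant contribution arises when all $|\alpha|$ derivatives fall on the exponent, giving the leading term of order $s^{|\alpha|}\lambda^{|\alpha|}(\nabla\psi)^{\alpha}$; whenever two of these derivatives instead combine into a single second derivative of $\varphi$, one power of $s$ is lost, which accounts for the term carrying $s^{|\alpha|-1}|\alpha|(|\alpha|-1)$, the combinatorial factor counting the choice of the pair.

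Next I would apply $\partial^{\beta}$ to $r\,\partial^{\alpha}\rho$ via the Leibniz rule. Each of the $|\beta|$ derivatives acts on one of the $|\alpha|$ factors $\partial_i\varphi$; distributing them produces the combinatorial weight $|\alpha|^{|\beta|}$ in front of the top term $(-s\varphi)^{|\alpha|}\lambda^{|\alpha+\beta|}(\nabla\psi)^{\alpha+\beta}$ (with $\varphi$ and $e^{\lambda\psi}$ interchangeable at leading order, which is all that matters for the final $\mathcal{O}_{\lambda}(s^{|\alpha|})$ bound). A derivative landing on a $\partial\psi$ factor raises the order of the $\psi$-derivative while lowering the power of $\lambda$ by one, yielding the intermediate term with factor $|\alpha||\beta|\lambda^{|\alpha+\beta|-1}\mathcal{O}(1)$. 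Collecting every term and bounding the bounded, $x$-dependent quantities uniformly in $\lambda$ gives the claimed identity and the final order $\mathcal{O}_{\lambda}(s^{|\alpha|})$.

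For the shifted statement I would isolate the exponential mismatch: writing $(\partial^{\alpha}\rho)(x+\sigma h e_i) = \rho(x+\sigma h e_i)\,Q_{\alpha}(x+\sigma h e_i)$ with $Q_{\alpha} = \mathcal{O}_{\lambda}(s^{|\alpha|})$ from the first part, the product $r(x)\rho(x+\sigma h e_i) = \exp\bigl(s[\varphi(x)-\varphi(x+\sigma h e_i)]\bigr)$ is controlled by the mean value bound $|\varphi(x)-\varphi(x+\sigma h e_i)| \le C(\lambda)\,h$, uniformly in $\sigma \in [-1,1]$, which produces the factor $e^{\mathcal{O}_{\lambda}(sh)}$. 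Applying $\partial^{\beta}$ in $x$, each derivative either hits $r(x)$ or the $\varphi$-difference in the exponent; the derivatives acting through the shifted exponent each carry an extra power of $h$ against the $s$ already present, which assembles into the $(1+(sh)^{|\beta|})$ factor. Under $sh\le\varepsilon$ the exponential is $\mathcal{O}_{\lambda,\varepsilon}(1)$ and $(sh)^{|\beta|}\le\varepsilon^{|\beta|}$, reducing the estimate to $\mathcal{O}_{\lambda,\varepsilon}(s^{|\alpha|})$. The final invariance statement is immediate from $\rho = e^{-s\varphi}$: exchanging $r$ and $\rho$ is exactly the substitution $s\mapsto -s$, under which the moduli of all the above expressions are unchanged.

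I expect the main obstacle to be the precise combinatorial bookkeeping of the Fa\`a di Bruno and Leibniz expansions needed to certify the exact coefficients $|\alpha|^{|\beta|}$, $|\alpha||\beta|$ and $|\alpha|(|\alpha|-1)$, rather than merely the orders of magnitude; and, in the shifted case, verifying that the interaction between $\partial^{\beta}$ and the $h$-dependence of the shift yields exactly the factor $(1+(sh)^{|\beta|})e^{\mathcal{O}_{\lambda}(sh)}$ uniformly over $\sigma$, without hidden dependence on $\beta$ beyond its length.
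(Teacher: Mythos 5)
The paper does not prove this lemma at all: it is imported verbatim from \cite[Lemma 3.7]{boyer-2010-elliptic}, so there is no internal proof to compare against. Your argument is essentially the standard one used in that reference -- conjugating $\rho=e^{-s\varphi}$ by $r$, expanding via Leibniz/Fa\`a di Bruno to identify the leading term $(-s\varphi)^{|\alpha|}\lambda^{|\alpha+\beta|}(\nabla\psi)^{\alpha+\beta}$ and the two loss mechanisms (a derivative landing on a $\partial\psi$ factor costs one power of $\lambda$; two derivatives pairing on the exponent cost one power of $s$), and handling the shifted case through the factor $r(x)\rho(x+\sigma h e_i)=e^{s[\varphi(x)-\varphi(x+\sigma h e_i)]}=e^{\mathcal{O}_{\lambda}(sh)}$ -- and it is sound, with the caveat you yourself note that certifying the exact coefficients $|\alpha|^{|\beta|}$, $|\alpha||\beta|$, $|\alpha|(|\alpha|-1)$ requires the full inductive bookkeeping rather than the sketch given.
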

Moreover, we also have
\begin{corollary}[{\cite[Corollary 3.8]{boyer-2010-elliptic}}]\label{Cor:derivative wrt x  for 2r}
Let $\alpha,\beta$ and $\delta$ multi-indices. We have
\begin{equation}
\begin{split}
\partial^{\delta}\left(r^{2}\,\partial^{\alpha}\rho\, \partial^{\beta}\rho\right)=&|\alpha+\beta|^{|\delta|}(-s\varphi)^{|\alpha+\beta|}\lambda^{|\alpha+\beta+\delta|}(\nabla\psi)^{\alpha+\beta+\delta}\\
&+|\delta||\alpha+\beta|(s\varphi)^{|\alpha+\beta|}\lambda^{|\alpha+\beta+\delta|-1}\mathcal{O}(1)\\
&+s^{|\alpha+\beta|-1}\left( |\alpha|(|\alpha|-1)+|\beta|(|\beta|-1)\right)\mathcal{O}(1)\\
&=\mathcal{O}_{\lambda}(s^{|\alpha+\beta|}).
\end{split}
\end{equation}
\end{corollary}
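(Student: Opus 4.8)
The plan is to deduce the corollary directly from the single-factor expansion of Lemma~\ref{lem:derivative:wrt:x}, using the elementary factorization
\[
r^{2}\,\partial^{\alpha}\rho\,\partial^{\beta}\rho=\bigl(r\,\partial^{\alpha}\rho\bigr)\bigl(r\,\partial^{\beta}\rho\bigr),
\]
which is valid because $r\rho=1$. First I would apply the Leibniz rule for the multi-index derivative $\partial^{\delta}$ to this product:
\[
\partial^{\delta}\bigl(r^{2}\,\partial^{\alpha}\rho\,\partial^{\beta}\rho\bigr)=\sum_{\delta'+\delta''=\delta}\binom{\delta}{\delta'}\,\partial^{\delta'}\bigl(r\,\partial^{\alpha}\rho\bigr)\,\partial^{\delta''}\bigl(r\,\partial^{\beta}\rho\bigr).
\]
Each factor now has exactly the form treated in Lemma~\ref{lem:derivative:wrt:x} (with the outer derivative $\delta'$, resp.\ $\delta''$, playing the role of its multi-index $\beta$), so I would substitute its three-term expansion for both $\partial^{\delta'}(r\,\partial^{\alpha}\rho)$ and $\partial^{\delta''}(r\,\partial^{\beta}\rho)$ and expand the resulting double sum, sorting the products by their order in $s$ and $\lambda$.

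The leading contribution comes from multiplying the two principal terms of Lemma~\ref{lem:derivative:wrt:x}, which yields
\[
|\alpha|^{|\delta'|}|\beta|^{|\delta''|}(-s\varphi)^{|\alpha|+|\beta|}\lambda^{|\alpha+\delta'|+|\beta+\delta''|}(\nabla\psi)^{\alpha+\beta+\delta'+\delta''}.
\]
Since $|\alpha+\delta'|+|\beta+\delta''|=|\alpha+\beta+\delta|$ and $\alpha+\beta+\delta'+\delta''=\alpha+\beta+\delta$, the $s$-, $\lambda$- and $\nabla\psi$-factors already match the first term of the corollary, and it only remains to sum the scalar coefficients against the binomial weights. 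Here I would invoke the componentwise binomial identity
\[
\sum_{\delta'+\delta''=\delta}\binom{\delta}{\delta'}|\alpha|^{|\delta'|}|\beta|^{|\delta''|}=\prod_{k}(|\alpha|+|\beta|)^{\delta_{k}}=|\alpha+\beta|^{|\delta|},
\]
which reproduces exactly the principal term $|\alpha+\beta|^{|\delta|}(-s\varphi)^{|\alpha+\beta|}\lambda^{|\alpha+\beta+\delta|}(\nabla\psi)^{\alpha+\beta+\delta}$.

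The remaining task is to route all mixed and lower-order products into the two correction terms. Pairing a principal factor (of size $\lambda^{|\cdot|}$) with the first subleading factor (of size $\lambda^{|\cdot|-1}$, carrying the weight $|\alpha||\delta'|$ or $|\beta||\delta''|$) preserves the full power $(s\varphi)^{|\alpha+\beta|}$ while dropping one power of $\lambda$; after resumming the binomial coefficients these assemble into the second term $|\delta||\alpha+\beta|(s\varphi)^{|\alpha+\beta|}\lambda^{|\alpha+\beta+\delta|-1}\mathcal{O}(1)$. Every product involving the last expansion term of Lemma~\ref{lem:derivative:wrt:x}, which carries $s^{|\alpha|-1}|\alpha|(|\alpha|-1)$ or $s^{|\beta|-1}|\beta|(|\beta|-1)$, loses one power of $s$ and feeds the third term $s^{|\alpha+\beta|-1}\bigl(|\alpha|(|\alpha|-1)+|\beta|(|\beta|-1)\bigr)\mathcal{O}(1)$. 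Finally, since $\varphi$ is bounded, $\lambda$ is fixed, and $s\geq1$ forces $s^{|\alpha+\beta|-1}\leq s^{|\alpha+\beta|}$, each of the three terms is $\mathcal{O}_{\lambda}(s^{|\alpha+\beta|})$, which gives the concluding estimate.

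I expect the genuine difficulty to lie in the bookkeeping of the subleading terms rather than the leading one. Specifically, one must verify that the $\lambda^{|\alpha+\beta+\delta|-1}$ pieces really combine with the advertised coefficient $|\delta||\alpha+\beta|$—which requires merging the $|\alpha||\delta'|$ and $|\beta||\delta''|$ prefactors through the binomial weights and using $|\delta'|+|\delta''|=|\delta|$—and that no $s$-lowering product secretly regenerates a term of top order $\lambda^{|\alpha+\beta+\delta|}$ or $s^{|\alpha+\beta|}$ with the wrong coefficient. Keeping every $\mathcal{O}(1)$ and $\mathcal{O}_{\lambda}(1)$ symbol uniform in $s$, $h$ and $\tau$ throughout this resummation is the delicate point; by contrast, the leading order follows at once from the binomial identity displayed above.
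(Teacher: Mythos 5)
Your derivation is correct: the paper itself states this corollary without proof, importing it from \cite{boyer-2010-elliptic}, and the argument there is exactly the one you give --- write $r^{2}\,\partial^{\alpha}\rho\,\partial^{\beta}\rho=(r\,\partial^{\alpha}\rho)(r\,\partial^{\beta}\rho)$ using $r\rho=1$, apply the multi-index Leibniz rule, insert the three-term expansion of Lemma~\ref{lem:derivative:wrt:x} for each factor, and resum via $\sum_{\delta'+\delta''=\delta}\binom{\delta}{\delta'}|\alpha|^{|\delta'|}|\beta|^{|\delta''|}=(|\alpha|+|\beta|)^{|\delta|}$. Your bookkeeping of the subleading terms (including the vanishing of the $\lambda^{|\alpha+\beta+\delta|-1}$ contribution when $|\delta|=0$ or $|\alpha+\beta|=0$, and the absorption of doubly subleading products) matches the intended proof, so no changes are needed.
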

Now, for completeness, we state two asymptotic behavior of the Carleman weight function considered in this work. The proofs of these results can be found in \cite{AA:perez:2024}. We adopt a similar notation for multi-indices for space semi-discrete operator. We define for the two-dimensional index $k=(k_{i},k_{j})$ with $k_{i},k_{j}\in\mathbb{N}$ the discrete operator $\mathbf{D}_{h}^{k}\triangleq D_{i}^{k_{i}}D_{j}^{k_{j}}$. Similarly, for the average operator, we define $\mathbf{A}_{h}^{l}\triangleq A_{i}^{l_{i}}A_{j}^{l_{j}}$ with $l\triangleq (l_{i},l_{j})$ being a two-dimensional multi-index. Considering the aforementioned notation the next two results gives us an estimate of several computations of the discrete average and derivative operators applied to our Carleman weight function. These estimates will be fundamental in the proof of the Carleman estimate from Theorem \ref{theo:Carleman}. 
\begin{proposition}[{\cite[Proposition 3.3]{AA:perez:2024}}]\label{prop:weight}
    Let $\alpha$ a multi-index and $k,l$ two-dimensional indices, respectively. Let $i,j=1,...,n$; provided $sh\leq \varepsilon$, we have
    \begin{align}
r\mathbf{A}_{h}^{l}\mathbf{D}_{h}^{k}\partial^{\alpha}\rho=r\partial^{k+\alpha}\rho+&s^{|\alpha|+k_{i}}\mathcal{O}_{\lambda}((sh)^{2})\label{eq:average:difference:weight}\\
&+s^{|\alpha|+k_{j}}\mathcal{O}_{\lambda}((sh)^{2})+s^{|\alpha+k|}\mathcal{O}_{\lambda}((sh)^{2})\notag.
    \end{align}
\end{proposition}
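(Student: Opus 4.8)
The plan is to reduce the composite discrete operator to a finite combination of point shifts of $\rho$, Taylor-expand, and read off both the main term and the size of the error from the already established Lemma~\ref{lem:derivative:wrt:x}. First I would rewrite everything through the shift operators: since $A_i=\tfrac12(\tau_{+i}+\tau_{-i})$ and $D_i=\tfrac1h(\tau_{+i}-\tau_{-i})$, the operator $\mathbf{A}_h^l\mathbf{D}_h^k=A_i^{l_i}A_j^{l_j}D_i^{k_i}D_j^{k_j}$ is a finite linear combination, with coefficients that are signed multiples of $h^{-k_i-k_j}$, of shifts that translate by half-integer multiples of $h$ along $e_i$ and $e_j$. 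Applying this to $\partial^\alpha\rho$ and freezing the factor $r(x)$, one obtains a finite sum
\begin{equation*}
r\,\mathbf{A}_h^l\mathbf{D}_h^k\partial^\alpha\rho=\sum_{p,q}c_{p,q}\,h^{-k_i-k_j}\,r(x)\,(\partial^\alpha\rho)\!\left(x+\tfrac h2(p\,e_i+q\,e_j)\right),
\end{equation*}
the sum ranging over integers $p,q$ of the appropriate parity.

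Next I would Taylor-expand each shifted evaluation about $x$ and sum against the coefficients. By consistency of the composite stencil, the leading surviving order reproduces exactly $r\partial^{k+\alpha}\rho$, and the first-order correction vanishes by symmetry. Indeed, under the reflection $\tau_{+i}\leftrightarrow\tau_{-i}$ (that is, $h\mapsto -h$) the operator $D_i^{k_i}$ is multiplied by $(-1)^{k_i}$ and $A_i^{l_i}$ is invariant, exactly as the continuous $\partial_i^{k_i}$ is; this parity forces the discrete operator to agree with the continuous one through first order and to contain only even powers of $h$ thereafter, so the leading correction is genuinely of order $h^2$. Each such correction is of the form $h^2\,r(x)\,\partial^\beta\rho(x)$ with $|\beta|\le|\alpha|+k_i+k_j+2$, and Lemma~\ref{lem:derivative:wrt:x} gives $r\partial^\beta\rho=\mathcal{O}_\lambda(s^{|\beta|})$. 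The two extra derivatives either both fall in one coordinate direction or split between $e_i$ and $e_j$, but in every case the full operator produces the scale $s^{|\alpha+k|}(sh)^2$, which, since $s\geq1$, already dominates and hence implies the three-term bound in \eqref{eq:average:difference:weight}; the refined scales $s^{|\alpha|+k_i}(sh)^2$ and $s^{|\alpha|+k_j}(sh)^2$ are precisely what one obtains in the degenerate sub-cases $k_j=0$ and $k_i=0$, and are recorded for uniformity.

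The hard part will be controlling the Taylor remainders uniformly, because they involve $\partial^\beta\rho$ evaluated at displaced points $x+\sigma h\,e_i$ while the exponentially growing factor $r(x)=e^{s\varphi(x)}$ is frozen at $x$; one cannot pair $r(x)$ with $\rho$ at a shifted point without care. This is exactly where the hypothesis $sh\leq\varepsilon$ is essential and where the \emph{shifted} estimate of Lemma~\ref{lem:derivative:wrt:x}, namely $r(x)(\partial^\beta\rho)(x+\sigma h\,e_i)=\mathcal{O}_{\lambda,\varepsilon}(s^{|\beta|})e^{\mathcal{O}_\lambda(sh)}=\mathcal{O}_{\lambda,\varepsilon}(s^{|\beta|})$, is indispensable: it keeps the mismatch factor $r(x)\rho(x+\sigma h\,e_i)=e^{\mathcal{O}_\lambda(sh)}$ bounded and places the remainders at the same or higher $s$-scale, the higher ones being absorbed thanks to $sh\leq\varepsilon$. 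To keep the combinatorics under control for arbitrary exponents $l_i,l_j,k_i,k_j$, I would organize the argument as an induction on the total number of applications $l_i+l_j+k_i+k_j$, peeling off one $A$ or $D$ at a time and carrying a shifted variant of the estimate (as in the second part of Lemma~\ref{lem:derivative:wrt:x}) through the induction, so that each single extra application requires only a first-order Taylor expansion with a controlled second-order remainder. Summing the finitely many contributions then yields \eqref{eq:average:difference:weight}.
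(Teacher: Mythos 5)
The paper does not prove Proposition~\ref{prop:weight}; it is imported verbatim by citation from \cite{AA:perez:2024}, whose proof (like its one-dimensional and elliptic ancestors in \cite{LOP-2020} and \cite{boyer-2010-elliptic}) proceeds exactly along the lines you describe: expand $D_i$ and $A_i$ via Taylor formulas with remainders, identify the leading term with the continuous derivative, and control the remainders — which pair $r(x)$ with derivatives of $\rho$ at displaced points — by the shifted estimate of Lemma~\ref{lem:derivative:wrt:x} under $sh\leq\varepsilon$. Your proposal is therefore essentially the standard argument, and you correctly isolate the two genuinely delicate points: the even-order consistency of the stencil and the frozen-$r$/shifted-$\rho$ mismatch. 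Two small repairs are needed. First, your parity statement is garbled: under the swap $\tau_{+i}\leftrightarrow\tau_{-i}$ alone, $D_i\mapsto -D_i$ while the continuous $\partial_i$ is unaffected, so the analogy as written does not hold; the correct observation is that $D_i=\tfrac{2}{h}\sinh(\tfrac{h}{2}\partial_i)$ and $A_i=\cosh(\tfrac{h}{2}\partial_i)$ are even functions of $h$, so their formal expansions beyond the leading term contain only even powers of $h$, which is what kills the first-order correction. Second, Lemma~\ref{lem:derivative:wrt:x} as stated only controls $r(x)(\partial^{\alpha}\rho)(x+\sigma h e_i)$ for a shift in a single direction, whereas your induction accumulates shifts along both $e_i$ and $e_j$; you need (and should state) the two-directional variant $r(x)(\partial^{\alpha}\rho)(x+\sigma h e_i+\sigma' h e_j)=\mathcal{O}_{\lambda,\varepsilon}(s^{|\alpha|})$ for $sh\le\varepsilon$, which follows by the same computation but is not literally covered by the lemma you invoke. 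With those two adjustments the argument closes and recovers \eqref{eq:average:difference:weight}, the three displayed error scales being, as you note, a refinement of the single dominant scale $s^{|\alpha+k|}\mathcal{O}_{\lambda}((sh)^2)$ since $s\geq 1$.
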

%%%%%%%%%%%%%%%%%%%%%%
%%%%%%%%%%%%%%%%%%%%%%
\begin{comment}
Another useful estimate is the following lemma
\begin{lemma}\label{lem:difference:average:2}
Let $\alpha$, $\beta$ and $\delta$ be multi-index and let $k,l$ be 2-dimensional indexes. Provided $sh\leq 1$, we have
\begin{equation}\label{lem:difference:average}
\mathbf{A}_{h}^{l}\mathbf{D}_{h}^{k}\left(\partial^{\beta}\left( r\partial^{\alpha}\rho\right) \right)=\partial_{i}^{k_{i}}\partial_{j}^{k_{j}}\partial^{\beta}\left(r\partial^{\alpha}\rho \right)+h^{2}\mathcal{O}_{\lambda}(s^{|\alpha|}).
\end{equation}
Moreover, for $i\neq j=1,...,n$ we have
\begin{equation}\label{eq0:lem:difference:average}
\mathbf{A}_{h}^{l}\mathbf{D}_{h}^{k}\partial^{\delta}\left(r^{2}\left(\partial^{\alpha}\rho\right)\partial^{\beta}\rho\right)=\partial^{k+\delta}\left(r^{2}(\partial^{\alpha}\rho)\partial^{\beta}\rho \right)+h^{2}\mathcal{O}(s^{|\alpha|+|\beta|}),
\end{equation}
and for $\sigma,\sigma'\in[0,1]$ we get
\begin{align}
\mathbf{A}_{h}^{l}\mathbf{D}^{k}_{h}\partial^{\beta}\left(r(x)\partial^{\alpha}\rho(x+\sigma he_{i
} \right)&=\mathcal{O}_{\lambda}(s^{|\alpha|}),\label{eq1:lem:difference:average}\\
\mathbf{A}_{h}^{l}\mathbf{D}_{h}^{k}\partial^{\delta}\left(r^{2}(x)\left( \partial^{\alpha}\rho(x+\sigma h)\right)\partial^{\beta}\rho(x+\sigma' h)\right)&=\mathcal{O}_{\lambda}(s^{|\alpha|+|\beta|}).\label{eq2:lem:difference:average}
\end{align}
The same expressions hold with $r$ and $\rho$ interchanged.
\end{lemma}
\end{comment}
%%%%%%%%%%%%%%%%%%%%%%%%%%%
%%%%%%%%%%%%%%%%%%%%%%%%%%%
\begin{theorem}[{\cite[Theorem 3.5]{AA:perez:2024}}]\label{theo:weight:estimates}
Let $\alpha$ be a multi-index and $c,l,k,m,p$ and $q$ two-dimensional multi-indices. For $i,j=1,...n$, and for $sh\leq 1$, we have
\begin{equation}\label{eq1:theo:weight:estimates}
       \mathbf{A}_{h}^{l}\mathbf{D}_{h}^{k}\partial^{\alpha}(r\mathbf{A}_{h}^{m}\mathbf{D}_{h}^{c}\rho)=\partial^{k+\alpha}(r\partial^{c}\rho)+s^{|c|}\mathcal{O}_{\lambda}((sh)^{2})
\end{equation}
and 
\begin{equation}\label{eq2:theo:weight:estimates}
\mathbf{A}_{h}^{l}\mathbf{D}_{h}^{k}\partial^{\beta}(r^{2}\mathbf{A}^{p}_{h}\mathbf{D}^{q}_{h}(\partial^{\alpha}\rho)\mathbf{A}_{h}^{m}\mathbf{D}_{h}^{c}\rho)=\partial^{k+\beta}(r^{2}(\partial^{q+\alpha}\rho)\partial^{c}\rho)+s^{|\alpha+c+q|}\mathcal{O}_{\lambda}((sh)^{2}).
\end{equation}
\end{theorem}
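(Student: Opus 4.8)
The plan is to reduce both identities to the two continuous-calculus estimates already available—Lemma \ref{lem:derivative:wrt:x} for the single-factor expressions $r\partial^{\alpha}\rho$ and Corollary \ref{Cor:derivative wrt x  for 2r} for the quadratic expressions $r^{2}\partial^{\alpha}\rho\,\partial^{\beta}\rho$—together with the half-step Taylor expansion of the elementary discrete operators. The starting observation is that each shift $\tau_{\pm i}$, and hence each $A_{i}$ and $D_{i}$, commutes with every continuous derivative $\partial^{\alpha}$ (differentiating a translate is the translate of the derivative), so the outer operator $\mathbf{A}_{h}^{l}\mathbf{D}_{h}^{k}$ may be slid past $\partial^{\alpha}$ freely. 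Thus the only genuine analysis is the replacement of $\mathbf{A}_{h}^{l}\mathbf{D}_{h}^{k}$ by $\partial^{k}$ on a smooth target, and of the inner $\mathbf{A}_{h}^{m}\mathbf{D}_{h}^{c}$ by $\partial^{c}$, each up to a controlled remainder.

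First I would record the operator Taylor expansions
\[
A_{i}=\mathrm{Id}+\tfrac{h^{2}}{8}\partial_{i}^{2}+h^{4}\mathcal{O}(\partial_{i}^{4}),\qquad
D_{i}=\partial_{i}+\tfrac{h^{2}}{24}\partial_{i}^{3}+h^{4}\mathcal{O}(\partial_{i}^{5}),
\]
valid on smooth functions, and compose them to obtain, for any smooth $g$,
\[
\mathbf{A}_{h}^{l}\mathbf{D}_{h}^{k}g=\partial^{k}g+\sum_{j\ge 1}h^{2j}\,(\text{derivatives of }g\text{ of order }|k|+2j),
\]
a leading term $\partial^{k}g$ (independent of the averaging index $l$) plus remainders carrying a factor $h^{2}$ and two extra spatial derivatives per order. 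For \eqref{eq1:theo:weight:estimates} I would set $g=\partial^{\alpha}(r\,\mathbf{A}_{h}^{m}\mathbf{D}_{h}^{c}\rho)$, use Proposition \ref{prop:weight} (after commuting $\partial^{k+\alpha}$ through $\mathbf{A}_{h}^{m}\mathbf{D}_{h}^{c}$) to rewrite the leading term as $\partial^{k+\alpha}(r\partial^{c}\rho)$ up to an error of the asserted form, and then bound each remaining term. The crucial point is that Lemma \ref{lem:derivative:wrt:x} guarantees $\partial^{\beta}(r\partial^{c}\rho)=\mathcal{O}_{\lambda}(s^{|c|})$ with the power of $s$ \emph{unchanged} by extra spatial derivatives; hence each $h^{2}$-remainder is at worst $h^{2}\mathcal{O}_{\lambda}(s^{|c|})$, which—since $s\ge 1$—is absorbed into $s^{|c|}\mathcal{O}_{\lambda}((sh)^{2})$, while the errors inherited from Proposition \ref{prop:weight} are already of this type (they arise from terms $r\,h^{2}\partial^{c'}\rho$ with $|c'|=|c|+2$, whose derivatives are $h^{2}\mathcal{O}_{\lambda}(s^{|c|+2})=s^{|c|}\mathcal{O}_{\lambda}((sh)^{2})$).

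For the quadratic identity \eqref{eq2:theo:weight:estimates} the scheme is identical, now with $g=\partial^{\beta}\bigl(r^{2}(\mathbf{A}_{h}^{p}\mathbf{D}_{h}^{q}\partial^{\alpha}\rho)(\mathbf{A}_{h}^{m}\mathbf{D}_{h}^{c}\rho)\bigr)$: the two inner discrete operators are again replaced via Proposition \ref{prop:weight}, the leading term becomes $\partial^{k+\beta}(r^{2}\partial^{q+\alpha}\rho\,\partial^{c}\rho)$, and all error terms are controlled by Corollary \ref{Cor:derivative wrt x  for 2r}, which supplies $\partial^{\delta}(r^{2}\partial^{\alpha}\rho\,\partial^{\beta}\rho)=\mathcal{O}_{\lambda}(s^{|\alpha+\beta|})$ and thus the exponent $s^{|\alpha+c+q|}$ in the remainder.

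The main obstacle is purely organizational: keeping the nested Leibniz and Taylor bookkeeping under control so that every one of the combinatorially many remainder terms is verified to land in the single class $s^{|\cdots|}\mathcal{O}_{\lambda}((sh)^{2})$. The conceptual engine—spatial derivatives of the combination $r\partial^{c}\rho$ do not raise the $s$-power, whereas derivatives hitting $\rho$ alone do—must be applied consistently, and the uniform constraint $sh\le 1$ with $s\ge 1$ is precisely what lets the geometric tail $\sum_{j\ge1}h^{2j}(\cdots)$ collapse into a single $\mathcal{O}_{\lambda}((sh)^{2})$ remainder.
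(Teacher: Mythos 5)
The paper does not actually prove Theorem \ref{theo:weight:estimates}; it is imported verbatim by citation from \cite{AA:perez:2024}, so there is no in-paper argument to compare against. Your reconstruction follows the route that the cited reference (and its predecessors \cite{LOP-2020}, \cite{boyer-2010-elliptic}) takes, and the core mechanism you identify is the correct one: discrete shifts commute with continuous derivatives, $A_i$ and $D_i$ differ from $\mathrm{Id}$ and $\partial_i$ by $h^2$ times two extra derivatives, and Lemma \ref{lem:derivative:wrt:x} (resp.\ Corollary \ref{Cor:derivative wrt x  for 2r}) guarantees that extra spatial derivatives of the \emph{combined} quantities $r\partial^{c}\rho$ and $r^{2}\partial^{\alpha}\rho\,\partial^{\beta}\rho$ cost nothing in powers of $s$, whereas extra derivatives landing on $\rho$ alone cost one power of $s$ each but arrive paired with $h^{2}$, yielding $s^{|c|+2}h^{2}=s^{|c|}(sh)^{2}$. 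Two points deserve tightening. First, the expansion should not be presented as an infinite series summed under $sh\le 1$: the weight $\rho=e^{-s\varphi}$ is smooth but its Taylor series in $h$ need not converge, so one must truncate at second order with an exact (integral or mean-value) remainder evaluated at intermediate points $x+\sigma\frac{h}{2}e_i$, $\sigma\in[-1,1]$. Second, bounding that remainder is precisely where the \emph{shifted} estimate of Lemma \ref{lem:derivative:wrt:x}, $\partial^{\beta}\bigl(r(x)(\partial^{\alpha}\rho)(x+\sigma h_{i})\bigr)=\mathcal{O}_{\lambda,\varepsilon}(s^{|\alpha|}(1+(sh)^{|\beta|}))e^{\mathcal{O}_{\lambda}(sh)}$, is indispensable, and the factor $e^{\mathcal{O}_{\lambda}(sh)}$ there is the true reason the hypothesis $sh\le 1$ is needed — not the convergence of a geometric tail. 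You gesture at this through Proposition \ref{prop:weight}, but for the quadratic identity \eqref{eq2:theo:weight:estimates} one also needs the analogous shifted version of Corollary \ref{Cor:derivative wrt x  for 2r} (the estimate \eqref{eq2:lem:difference:average}-type bounds of the cited work), which your write-up does not invoke explicitly. With those two repairs the argument is complete and coincides with the proof in \cite{AA:perez:2024}.
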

Proposition \ref{prop:weight} and Theorem \ref{theo:weight:estimates} are extension into higher dimension of the  asymptotic behavior of Carleman weight functions obtained in \cite{LOP-2020}.
%%%%%%%%%%%%%%%%%%%%%%%%%%%%%%%%%%%%%%%%%%%%%%%%%%%%%%%%%%%%%%%%%%%%%%%%%%%%%%%%%%%%%%%%%%%%%%%%%%%%%%%%%%%%%%%%%%%%%%%%%%%%%%%%%%%%%%%%%%%%%%%%%%%%%%%%%%%%%%%%%%%%%%%%%%%%%%%%%%%%%%%%%%%%%%%%%%%%%%%%%%%%%%%%%%%%%%%%%%%%%%%%%%%%%%%
\section{Technical steps to obtain the estimate for the cross-product}\label{tecgnicalstepcrossproduct}

In this section, we prove the technical results used in the development of the discrete Carleman estimate. We consider, in the following Lemmas, $\tau h(\delta T^{2})^{-1}\leq 1$ to ensure that the results from Section \ref{sec:weight:function} hold.  The development of the estimates presented in Lemmas \ref{lem:dz}, \ref{lem:AdditionalsTerms}, and \ref{lem:dt} will be divided into three sections. In the first section, we will define the stochastic terms, which involve the differential \( dz \) and include \( I_{11}, I_{21}, \) and \( I_{31} \). The second section will introduce the corrector terms, which are added to help us overcome the difficulties encountered in both the stochastic and deterministic terms, respectively. Finally, the third section will address the deterministic terms, which involve the differential \( dt \), namely \( I_{12}, I_{22}, \) and \( I_{32} \).

\subsection{Terms involving the differential dz.}\label{subsec:dz}
\subsubsection{Estimate of \texorpdfstring{$I_{11}$.}{}}
We set $\beta_{11}=rD_i^2\rho$. Then, using \eqref{eq:averengeanddifference} in $rA_i^2(\rho)$ and noticing that $r\rho=1$, it follows that
\begin{equation*}
\begin{split}
I_{11}=2\E \int_{Q} C_1(z)B_1(z)=& 2\sum_{i=1}^{n}\left(\E \int_{Q} D_i(\gamma_iD_iz)\,dz+\frac{h^2}{4}\E\int_{Q}\beta_{11}\, D_i(\gamma_iD_iz)\,dz\right)\\
\triangleq &\sum_{i=1}^{n}\left(I_{11}^{(a)}+I_{11}^{(b)}\right).
\end{split}
\end{equation*}
Let us first consider $I_{11}^{(a)}$. We get by integration by parts with respect to the differential operator and noting that $z=0$ on $\partial_{i} Q$, for each $i=1,...,n$ that
\begin{equation*}
\begin{split}
    I_{11}^{(a)}=& -2\E \int_{Q^{\ast}_{i}} \gamma_{i}D_izD_i(dz).
\end{split}
\end{equation*}
Moreover, by virtue of the Itô formula, we infer from the integral above that
%{we have that
%\begin{equation}\label{Eq:itoD_iz}
%    \left.2\int_0^T \gamma_i D_iz\,dD_iz=\gamma_i |D_iz|\right|_0^T-\int_0^T \gamma_i\, d[D_iz, D_iz].
%\end{equation}
%Given that $d[D_iz,D_iz]=(D_i(dz))^2$ and \eqref{eq:ito:sum},  
\begin{equation*}
    I_{11}^{(a)}=-\E\left. \int_{\mathcal{M}_i^*}\gamma_{i}|D_iz|^2\right|_0^T+\E\int_{Q^{\ast}_{i}}\gamma_{i}|D_i(dz)|^2.
\end{equation*} %escribir la frontera de la integracion por parte

In turn, according to the definition of $I_{11}^{(b)}$ and the property \eqref{eq:int:dif} for each $i=1,...,n$, we see that
\begin{equation*}
\begin{split}
    I_{11}^{(b)}=&-\frac{h^2}{2}\E \int_{Q_i^{*}} \gamma_{i}D_iz\,D_i(\beta_{11} \; dz),
\end{split}
\end{equation*}
we have used that $z=0$ on $\partial_i Q$. By \eqref{eq:difference:product} in the integral above, yields
\begin{equation}\label{eq:I11b}
    I_{11}^{(b)}=-\frac{h^2}{2}[\E\int_{Q_i^{\ast}} \gamma_i D_i(\beta_{11})\,D_iz\,A_i(dz)+\E\int_{Q_i^{\ast}} \gamma_i A_i(\beta_{11})\,D_iz\,D_i(dz)]\triangleq I_{11}^{(b_1)}+I_{11}^{(b_2)}.
\end{equation}
Applying the Itô's formula in $I_{11}^{(b_2)}$ we obtain 
\begin{equation}\label{firstintegral}
\begin{split}
  I_{11}^{(b_2)}=-\frac{h^2}{4} \left.\E\int_{Q^{\ast}_i}\gamma_i A_i(\beta_{11})|D_iz|^2\right|_0^T&+\frac{h^2}{4}\E\int_{Q_{i}^{\ast}}\gamma_i A_i(\beta_{11})|D_i(dz)|^2\\
  &+\frac{h^2}{4}\E\int_{Q_{i}^{\ast}}\partial_t(\gamma_i A_i(\beta_{11}))|D_iz|^2dt.
  \end{split}
\end{equation}
Now, thanks to Ito's formula and \eqref{eq:derivadacuadrado}, we can assert that $d(D_i(z^2))=2A_i(dz)D_i(z)+2A_izD_i(dz)+2A_i(dz)D_i(dz).$ 
This allows us to write $I_{11}^{(b_1)}$ as
\begin{equation}\label{eq:second:int1}
\begin{split}
 I_{11}^{(b_1)}=&-\frac{h^2}{4}\E\int_{Q_i^{\ast}} \gamma_i D_i(\beta_{11})d(D_i(z^2))+\frac{h^2}{2}\E\int_{Q_i^{\ast}} \gamma_i D_i(\beta_{11})A_i(dz)D_i(dz)\\
 &+\frac{h^2}{2}\E\int_{Q_i^{\ast}} \gamma_i D_i(\beta_{11})A_izD_i(dz)\\
 \triangleq& I_{11}^{(b_{11})}+I_{11}^{(b_{12})}+I_{11}^{(b_{13})}.
  \end{split}
\end{equation}
By \eqref{eq:int:dif}, Itô's formula and noting that $z=0$ on $\partial_i Q$, from $I_{11}^{b_{11}}$ we have 
\begin{equation}\label{eq:firstiib2}
\begin{split}
    I_{11}^{(b_{11})}=&\frac{h^2}{4}\E \int_{Q}D_i(\gamma_iD_i(\beta_{11}))d|z|^2-\frac{h^2}{4}\int_{\partial Q}d|z|^2t_{r}^{i}(\gamma_i D_i(\beta_{11}))\nu_i\\
    =& \left.\frac{h^2}{4}\E \int_{\mathcal{M}}D_i(\gamma_iD_i(\beta_{11}))|z|^2\right|_0^T-\frac{h^2}{4}\E \int_{Q}\partial_t(D_i(\gamma_iD_i(\beta_{11})))|z|^2dt.
    \end{split}
\end{equation}
Now we estimate the second integral in \eqref{eq:second:int1}. In view of \eqref{eq:derivadacuadrado} we note that
\begin{equation*}
   I_{11}^{(b_{12})}=\frac{h^2}{4}\E\int_{Q_i^{*}} \gamma_{i}D_i(\beta_{11})D_i(|dz|^2). 
\end{equation*}
Applying \eqref{eq:int:dif}, yields 
\begin{equation}\label{eq:second:int2}
\begin{split}
    I_{11}^{(b_{12})}=&-\frac{h^2}{4}\E\int_{Q}D_i[\gamma_iD_i(\beta_{11})]|dz|^2,
\end{split}
\end{equation}
where we utilized that $z=0$ on $\partial_i Q$. Analogously, using \eqref{eq:int:dif} on $I_{11}^{b_{13}}$ we get
\begin{equation}\label{eq:second:int3}
    \begin{split}
        I_{11}^{(b_{13})}=&-\frac{h^2}{2}\E\int_{Q} D_i(\gamma_i D_i(\beta_{11})A_iz)dz.
    \end{split}
\end{equation}
Replacing \eqref{eq:firstiib2}-\eqref{eq:second:int3} in  \eqref{eq:second:int1}, from $I_{11}^{b_1}$ we arrive at 
\begin{equation}\label{eq:secondintegral}
    \begin{split}
 I_{11}^{(b_1)}=&\left.\frac{h^2}{4}\E \int_{\mathcal{M}}D_i(\gamma_iD_i(\beta_{11}))|z|^2\right|_0^T-\frac{h^2}{4}\E \int_{Q}\partial_t(D_i(\gamma_iD_i(\beta_{11})))|z|^2dt\\
 &-\frac{h^2}{4}\E\int_{Q}D_i[\gamma_iD_i(\beta_{11})]|dz|^2-\frac{h^2}{2}\E\int_{Q} D_i(\gamma_i D_i(\beta_{11})A_iz)\,dz.
  \end{split}
\end{equation}
Therefore, combining \eqref{firstintegral} and \eqref{eq:secondintegral}, we can rewrite $I_{11}^{b}$ as
\begin{equation}
    \begin{split}
        &I_{11}^{(b)}=-\frac{h^2}{4}\left[ \left.\E\int_{\mathcal{M}^{\ast}_i}\gamma_i A_i(\beta_{11})|D_iz|^2\right|_0^T-\left.\E \int_{\mathcal{M}}D_i(\gamma_iD_i(\beta_{11}))|z|^2\right|_0^T\right.\\
        &-\E\int_{Q_{i}^{\ast}}\gamma_i A_i(\beta_{11})|D_i(dz)|^2+2\E\int_{Q} D_i(\gamma_i D_i(\beta_{11})A_iz)\,dz+\E\int_{Q}D_i[\gamma_iD_i(\beta_{11})]|dz|^2\\
 &\left.+\E \int_{Q}\partial_t(D_i(\gamma_iD_i(\beta_{11})))|z|^2dt-\E\int_{Q_{i}^{\ast}}\partial_t(\gamma_i A_i(\beta_{11}))|D_iz|^2dt\right].
    \end{split}
\end{equation}
Thus, using the definition of $C_4z$ and $B_1z$ on the last integral above, we can rewrite  $I_{11}$ as
\begin{equation}\label{eq:int:I11}
    \begin{split}
  &I_{11}=\sum_{i=1}^{n}\left(-\E\left. \int_{\mathcal{M}_i^*}\gamma_{i}|D_iz|^2\right|_0^T+\E\int_{Q^{\ast}_{i}}\gamma_{i}|D_i(dz)|^2-\frac{h^2}{4}\left[ \left.\E\int_{\mathcal{M}^{\ast}_i}\gamma_i A_i(\beta_{11})|D_iz|^2\right|_0^T \right.\right. \\
  &-\E\int_{Q_{i}^{\ast}}\gamma_i A_i(\beta_{11})|D_i(dz)|^2-\E\int_{Q_{i}^{\ast}}\partial_t(\gamma_i A_i(\beta_{11}))|D_iz|^2dt-\left.\E \int_{\mathcal{M}}D_i(\gamma_iD_i(\beta_{11}))|z|^2\right|_0^T\\
  &\left.\left.+\E \int_{Q}\partial_t(D_i(\gamma_iD_i(\beta_{11})))|z|^2dt+\E\int_{Q}D_i[\gamma_iD_i(\beta_{11})]|dz|^2\right]\right)-2\E\int_{Q} C_4z B_1z, 
\end{split}
\end{equation}
thank to $I_{11}^{(a)}$ and $I_{11}^{(b)}$. Now, we have to estimate the terms with $\beta_{11}=rD_i^2\rho$ on the previous expression, using Theorem \ref{theo:weight:estimates} and Lemma \ref{lem:derivative:wrt:x} we obtain
\begin{equation}
    \begin{aligned}
     &A_i(\beta_{11})=s^2\varphi^2\lambda^2(\partial_{i}\psi)^{2}+2s\mathcal{O}_{\lambda}(1)+s^2\mathcal{O}_{\lambda}((sh)^2)=s^2\mathcal{O}_{\lambda}(1),  \\
     &D_i(\beta_{11})=2s^2\varphi^2\lambda^3(\partial_i\psi)^3+s^2\varphi^2\lambda^2\mathcal{O}(1)+2s\mathcal{O}_{\lambda}(1)+s^2\mathcal{O}_{\lambda}((sh)^2)=s^2\mathcal{O}_{\lambda}(1),\\
     &\partial_t(\gamma_iA_i(\beta_{11}))=T\theta s^2\mathcal{O}_{\lambda}(1),
    \end{aligned}
\end{equation}
where in the last estimate we have used that $\gamma_i$ is time independent. Moreover, noting that $D_i(\gamma_i)=\mathcal{O}(1)$, $A_i(\gamma_i)=\mathcal{O}(1)$ 
 and $D_i(\gamma_iD_i(\beta_{11})=D_i(\gamma_i)A_iD_i(\beta_{11})+A_i(\gamma_i)D^2_i(\beta_{11})$, then thanks to Theorem \ref{theo:weight:estimates}
\begin{equation}\label{funtiongammaimport}
\begin{aligned}
&D_i(\gamma_iD_i(\beta_{11}))=s^2\mathcal{O}_{\lambda}(1),\\
&\partial_t(D_i(\gamma_iD_i(\beta_{11})))=T\theta s^2\mathcal{O}_\lambda(1).
\end{aligned}
\end{equation}
Combining these estimates, \eqref{eq:int:I11}  and recalling the definition of $I_{41}$, we conclude that
\begin{equation*}
    I_{11}+I_{41}=\sum_{i=1}^{n} \E\int_{Q_i^{\ast}}\gamma_i|D_i(dz)|^2+X_{11}-Y_{11},
\end{equation*}
where 
\begin{align*}
    X_{11}\triangleq &\sum_{i=1}^{n}\left(\E\int_{Q_{i}^{\ast}}\mathcal{O}_{\lambda}((sh)^2)|D_i(dz)|^2+\E\int_{Q_{i}^{\ast}}T\theta(t)\mathcal{O}_{\lambda}((sh)^2)\,|D_iz|^2dt\right)\\
    &-\E\int_{Q}T\theta(t) s^2\mathcal{O}_{\lambda}(1)|z|^2dt-\E\int_{Q}\mathcal{O}_{\lambda}((sh)^2)\,|dz|^2
\end{align*}
and
\begin{align*}
    Y_{11}\triangleq \sum_{i=1}^{n}\E\left. \int_{\mathcal{M}_i^*}[\mathcal{O}(1)+\mathcal{O}_{\lambda}((sh)^2)]\,|D_iz|^2\right|_0^T+\E\left.\int_{\mathcal{M}}\mathcal{O}_{\lambda}((sh)^2)\,|z|^2\right|_0^T.
\end{align*}
%%%%%%%%%%%%%%%%%%%%%%%%%%%%%%%%%%%%%%%%%%%%%%%%%%
%%%%%%%%%%%%%%%%%%%%%%%%%%%%%%%%%%%%%%%%
\subsubsection{Estimate of \texorpdfstring{$I_{21}$}{}}
We set $\beta_{21}=\gamma_irD_i^2\rho$. From the definition of $I_{21}$ and using \eqref{eq:averengeanddifference} in $A_i^2(z)$, it follows that
\begin{equation*}
\begin{split}
I_{21}=2\E \int_{Q} C_2(z)\,B_1(z)=& 2\sum_{i=1}^{n}\left(\E \int_{Q} \beta_{21}\; zdz+\frac{h^2}{4}\E\int_{Q}\beta_{21}\, D^2_iz\,dz\right)\\\triangleq&\sum_{i=1}^{n}\left(I_{21}^{(a)}+I_{21}^{(b)}\right).
\end{split}
\end{equation*}
By the Itô's formula from $I_{21}^{(a)}$ for each $i\in\llbracket 1,n \rrbracket$, we have that
\begin{equation*}
    \begin{split}
        I_{21}^{(a)}=\left.\E\int_{\mathcal{M}}\beta_{21}  |z|^2\right|_0^T-\E\int_{Q}\partial_t(\beta_{21}  )|z|^2dt- \E\int_{Q}\beta_{21} |dz|^2.
    \end{split}
\end{equation*}
On the other hand, applying \eqref{eq:int:dif} in $I_{21}^{(b)}$, we obtain for each i $=1,...,n$ that 
\begin{equation}
    I_{21}^{(b)}=-\frac{h^2}{2}\E \int_{Q_{i}^{\ast}} D_iz\,D_i(\beta_{21}\, dz)+\E\int_{\partial_{i} Q}dz\, \beta_{21}\, t_r^i(D_iz)\nu_{i}.
\end{equation}
Noting that $z=0$ on $\partial_i Q$ and using \eqref{eq:difference:product}, we see that
\begin{equation}
    I_{21}^{(b)}=-\frac{h^2}{2}\left[\E \int_{Q_i^{\ast}} D_i(\beta_{21})D_i z\; A_i(dz)+\E \int_{Q_i^{\ast}} A_i(\beta_{21})D_i z\; D_i(dz)\right]
\end{equation}
Analogously to $I_{11}^{(b)}$, from $I_{21}^{b}$ we arrived at
\begin{equation}
    \begin{split}
        I_{21}^{(b)}=&-\frac{h^2}{4}\left[ \left.\E\int_{\mathcal{M}^{\ast}_i}A_i(\beta_{21})|D_iz|^2\right|_0^T-\E\int_{Q_{i}^{\ast}} A_i(\beta_{21})|D_i(dz)|^2\right.\\
        &-\E\int_{Q_{i}^{\ast}}\partial_t(A_i(\beta_{21}))|D_iz|^2dt-\left.\E \int_{\mathcal{M}}D^2_i(\beta_{21}))|z|^2\right|_0^T+\E \int_{Q}\partial_t(D^2_i(\beta_{21}))|z|^2dt\\
 &\left.+\E\int_{Q}D^2_i(\beta_{21})]|dz|^2+2\E\int_{Q} D_{i}(D_i(\beta_{21})A_iz)\,dz\right].
    \end{split}
\end{equation}
Therefore, using the definition of $C_4(z)$, $B_1(z)$ on the last integral above and thanks to $I_{21}^{(a)}$ and $I_{21}^{(a)}$, we can rewrite $I_{11}$ as
\begin{equation}
    \begin{split}
        &I_{21}=-2\E\int_{Q} C_5(z)\, B_1(z)+\sum_{i=1}^{n}\left(\left.\E\int_{\mathcal{M}}\beta_{21}  |z|^2\right|_0^T-\E\int_{Q}\partial_t(\beta_{21}   )|z|^2dt- \E\int_{Q}\beta_{21}  |dz|^2\right.\\
        &-\frac{h^2}{4}\left[ \left.\E\int_{\mathcal{M}^{\ast}_i}A_i(\beta_{21})|D_iz|^2\right|_0^T\right.\left.-\E\int_{Q_{i}^{\ast}} A_i(\beta_{21})|D_i(dz)|^2\right.-\E\int_{Q_{i}^{\ast}}\partial_t(A_i(\beta_{21}))|D_iz|^2dt\\
        &\left.-\left.\E \int_{\mathcal{M}}D^2_i(\beta_{21}))|z|^2\right|_0^T\left.+\E \int_{Q}\partial_t(D^2_i(\beta_{21}))|z|^2dt+\E\int_{Q}D^2_i(\beta_{21})|dz|^2\right]\right).
    \end{split}
\end{equation}
The result follows by using Theorem \ref{theo:weight:estimates} and Lemma \ref{lem:derivative:wrt:x} on the terms with $\beta_{21}=\gamma_irD_i^2\rho$. Indeed, 
\begin{equation}\label{eq:estimate:I21a}
    \begin{aligned}   \beta_{21}&=\gamma_is^2\lambda^2\varphi^2(\partial_i\psi)^2+2s\mathcal{O}_{\lambda}(1)+s^2\mathcal{O}_{\lambda}((sh)^2)=s^2\mathcal{O}_{\lambda}(1),\\
        \partial_{t}(\beta_{21})&= T\theta s^2\mathcal{O}_{\lambda}(1).
    \end{aligned}
\end{equation}
Now, applying \eqref{eq:average:product}, we obtain
\begin{equation*}
A_i(\beta_{21})=A_i(\gamma_i)A_i(rD_i^2\rho)+\frac{h^2}{4}D_i(\gamma_i)D_i(rD_i^2\rho)\\
\end{equation*}
and by \eqref{eq:difference:product} we have
\begin{equation*}D_i^2(\beta_{21})=A_i^2(\gamma_i)D_i^2(rD_i^2\rho)+2A_iD_i(\gamma_i)D_iA_i(rD_i^2\rho)+D_i^2(\gamma_i)A_i(rD_i^2\rho).
\end{equation*}
Consider that $A^\alpha_i(\gamma_i)=D_i^{\alpha}(\gamma_i)=A_iD_i(\gamma_i)=\mathcal{O}(1)$ for $\alpha=1,2$, and repeated application of Theorem \ref{theo:weight:estimates} and Lemma \ref{lem:derivative:wrt:x}, we obtain the following estimates
\begin{equation}\label{eq:estimate:I21b}
    \begin{aligned}
&A_i(\beta_{21})=D_i^2(\beta_{21})=s^2\mathcal{O}_{\lambda}(1),\\    &\partial_t(A_i(\beta_{21}))=\partial_t(D_i^2(\beta_{21}))=Ts^2\theta\mathcal{O}_{\lambda}(1).
    \end{aligned}
\end{equation}
Combining the definition of $I_{51}$, \eqref{eq:estimate:I21a} and \eqref{eq:estimate:I21b}, we deduce that
\begin{equation*}
    I_{21}+I_{51}=-X_{21}-Y_{21},
\end{equation*}
where
\begin{equation*}
    \begin{split}
        X_{21}\triangleq&\; \E\int_{Q} T\theta [s^2\mathcal{O}_{\lambda}(1)-\mathcal{O}_{\lambda}((sh)^2)]|z|^2dt+\E\int_{Q}(s^2\mathcal{O}_{\lambda}(1)-\mathcal{O}_{\lambda}((sh)^2))|dz|^2\\
        &+\sum_{i=1}^{n}\left(\E\int_{Q_{i}^{\ast}}\mathcal{O}_{\lambda}((sh)^2)|D_i(dz)|^2+\E\int_{Q_i^{\ast}}T\theta\mathcal{O}_{\lambda}((sh)^2)|D_iz|^2dt\right)
    \end{split}
\end{equation*}
and
\begin{equation*}
\begin{split}
    Y_{21}\triangleq&\;\left.\E\int_{\mathcal{M}}[\mathcal{O}_\lambda((sh)^2)-s^2\mathcal{O}_{\lambda}(1)]|z|\right|_0^T+\left.\sum_{i=1}^{n}\E\int_{\mathcal{M}_{i}^{\ast}}\mathcal{O}_{\lambda}((sh)^2)|D_iz|^2\right|_0^T.
\end{split}
\end{equation*}
%%%%%%%%%%%%%%%%%%%%%%%%%%%%%%%%%%%%%%%%%%%%%%%%%%%%%%%%%%%%%%%%%%%%%%%%%%%%%%%%%%%%%%%%%%%%%%%%%%%%%%%%%%%%%%%%%%%%%%%%%%%%%%
\subsubsection{Estimate of \texorpdfstring{$I_{31}$}{}}
Similarly to $I^{(a)}_{21}$, we obtain that
\begin{equation*}
I_{31}=\left.\E\int_{\mathcal{M}}r\partial_{t}(\rho)\,  |z|^2\right|_0^T-\E\int_{Q}\partial_t(r\partial_{t}(\rho))\,|z|^2dt- \E\int_{Q}r\partial_{t}(\rho)\, |dz|^2.
\end{equation*}
Consider $|\theta_{t}|\leq CT\theta^2$ and $|\theta_{tt}|\leq CT\theta^3$, thus
$I_{31}\geq -X_{31}-Y_{31}$,
where
\begin{equation*}
X_{31}\triangleq\,\E\int_{Q}T\theta^3\mathcal{O}(1)|z|^2dt+\E\int_{Q}T\theta^2\mathcal{O}(1)|dz|^2 \text{ and } Y_{31}\triangleq\,\left.\E\int_{\mathcal{M}}T\theta^2\mathcal{O}(1)|z|^2\right|_0^T. 
\end{equation*}
%%%%%%%%%%%%%%%%%%%%%%%%%%%%%%%%%%%%%%%%%%%%%%%%%%%%%%%%%%%%%%%%%%%%%%%%%%%%%%%%%%%%%%%%%%%%%%%%%%%%%%%%%%%%%%%%%%%%%%%%%%%%%%%%%%%%%%%%%%%%%%%%%%%%%%%%%%%%%%%%%%%%%%%%%%%%%%%%%%%%%%%%%%%%%%%%%%%%%%%%%%%%%%%%

\subsection{Products with supplementary terms.}

\subsubsection{Estimate of \texorpdfstring{$I_{42}$}{}}
%%%%%%%%%%%%%%%%%%%%%%%%%%%%%%%%%%%%%
Denoting $\beta_{42}^{ij}\triangleq\,\gamma_jrD_i(\gamma_i D_i(rD_i^2\rho)\,D_jA_j\rho$, $\alpha_{42}^{i j}\triangleq\,\gamma_j r A_i(\gamma_i D_i(r D_i^2\rho)\,D_j A_j\rho$ and using \eqref{eq:difference:product} on $C_4(z)$, we have that 
\begin{equation*}
\begin{split}
    2\E \int_Q C_4(z)\,B_2(z)\,dt=&h^2\sum_{i,j=1}^{n}\E\int_{Q}(\beta_{42}^{ij}A_i^2z\,D_jA_jz+\alpha_{42}^{ij}\,D_iA_iz\,D_jA_jz)\,dt\\
    &\triangleq\,\sum_{i,j=1}^{n} I_{42}^{ij}.
\end{split}
\end{equation*}

Let us first, we will estimate $\beta_{42}^{ij}$ and $\alpha_{42}^{ij}$ for each $i,j=1,...,n$. From Theorem \ref{eq1:theo:weight:estimates} and Lemma \ref{lem:derivative:wrt:x} for each $i,j=1,...,n$, we obtain that
\begin{equation}\label{eq:estimatealphabeta42}
    \begin{split}
        \beta_{42}^{ij}=\alpha_{42}^{ij}=s^3\mathcal{O}_{\lambda}(1).
    \end{split}
\end{equation}
Now, we will focus on the case $i=j$.  Applying Young's inequality and using \eqref{eq:estimatealphabeta42} we note that
\begin{equation*}
    |I_{42}^{ii}|\leq \E\int_{Q}s|\mathcal{O}_{\lambda}((sh)^2)|\left(|A_i^2z|^2+|D_iA_iz|^2\right)\;dt
\end{equation*}
By \eqref{eq:promedioinequality} and \eqref{eq:int:ave}, we obtain
\begin{equation*}
    \begin{split}
        \E\int_{Q}s|\mathcal{O}_\lambda((sh)^2|&\, |A_i^2z|^2\,dt \leq \E\int_{Q}s|\mathcal{O}_\lambda((sh)^2|\, A_i(|A_iz|^2)\,dt\\
        \leq & \E\int_{Q_{i}^{\ast}}s|\mathcal{O}_\lambda((sh)^2|\, |A_iz|^2\,dt-\frac{h}{2}\E\int_{\partial_i Q} s|\mathcal{O}_{\lambda}((sh)^2)|t_r^i(|A_iz|^2)\\
        \leq & \E\int_{Q_i^{\ast}}s|\mathcal{O}_\lambda((sh)^2|\, A_i(|z|^2)\,dt,
\end{split}
\end{equation*}
where we used that $t_r^i(|A_iz|^2)\geq 0$. Using again \eqref{eq:promedioinequality}, \eqref{eq:int:ave} and noting that $z=0$ on $\partial_i Q$ for each $i\in\llbracket 1,n \rrbracket$, we see that
\begin{equation}\label{eq:estimates:I_{42(a)}}
\begin{split}
   \E\int_{Q}s|\mathcal{O}_\lambda((sh)^2|&\, |A_i^2z|^2\,dt     \leq  \E\int_{Q_i^{\ast}}s|\mathcal{O}_\lambda((sh)^2|\, A_i(|z|^2)\,dt\\
   \leq & \E\int_{Q}s|\mathcal{O}_\lambda((sh)^2|\, |z|^2\,dt+\frac{h}{2}\E\int_{\partial_i Q} s|\mathcal{O}_{\lambda}((sh)^2)|\,|z|^2\,dt\\
        \leq &\E\int_{Q}s|\mathcal{O}_\lambda((sh)^2|\, |z|^2\,dt.
    \end{split}
\end{equation}
Similarly, by \eqref{eq:promedioinequality}, \eqref{eq:int:ave} and given that $t_r^i(|D_iz|^2)\geq 0$ on $\partial_i Q$ for each $i=1,...,n$, yields
\begin{equation}\label{eq:estimates:I_{42(b)}}
    \begin{split}
        \E\int_{Q}s|\mathcal{O}_\lambda((sh)^2|&\, |A_iD_iz|^2\,dt \leq \E\int_{Q}s|\mathcal{O}_\lambda((sh)^2|\, A_i(|D_iz|^2)\,dt\\
        \leq & \E\int_{Q_{i}^{\ast}}s|\mathcal{O}_\lambda((sh)^2|\, |D_iz|^2\,dt-\frac{h}{2}\E\int_{\partial_i Q} s|\mathcal{O}_{\lambda}((sh)^2)|t_r^i(|D_iz|^2)\,dt\\
        \leq & \E\int_{Q_i^{\ast}}s|\mathcal{O}_\lambda((sh)^2|\, |D_iz|^2\,dt.\\
    \end{split}
\end{equation}
Thanks to \eqref{eq:estimates:I_{42(a)}} and \eqref{eq:estimates:I_{42(b)}}, we  can assert that
\begin{equation}\label{eq:estimates:I_{42}ii}
    I_{42}^{ii}\geq -\E\int_{Q}s|\mathcal{O}_\lambda((sh)^2|\, |z|^2\,dt-\E\int_{Q_i^{\ast}}s|\mathcal{O}_\lambda((sh)^2|\, |D_iz|^2\,dt \triangleq -W^{ii}_{42}.
\end{equation}

We now turn to the case $i\not=j$. Consider $$I_{42}^{ij}=h^2\E\int_{Q}(\beta_{42}^{ij}A_i^2z\,D_jA_jz+\alpha_{42}^{ij}\,D_iA_iz\,D_jA_jz)\,dt\triangleq\,I_{42}^{ij(a)}+I_{42}^{ij(b)},$$
using Young's inequality on $I_{42}^{ij(a)}$ and the estimates for $\beta_{42}^{ij}$ and $\alpha_{42}^{ij}$ from \eqref{eq:estimatealphabeta42}, gives
\begin{equation*}
    |I_{42}^{ij(a)}|\leq\,\E\int_{Q}s|\mathcal{O}_{\lambda}((hs)^2)|\,\left(|A_i^2z|^2+|D_jA_jz|^2\right)\,dt
\end{equation*}
Hence, combining \eqref{eq:estimates:I_{42(a)}} with \eqref{eq:estimates:I_{42(b)}}, we deduce that
\begin{equation}
    |I_{42}^{ij(a)}|\leq\,\E\int_{Q}s|\mathcal{O}_\lambda((sh)^2|\, |z|^2\,dt+\E\int_{Q_j^{\ast}}s|\mathcal{O}_\lambda((sh)^2|\, |D_jz|^2\,dt.
\end{equation}
Repeated application of Young's inequality and \eqref{eq:estimates:I_{42(b)}} on $I_{42}^{ij(b)}$ enables us to write
\begin{equation}\label{eq:estimates:I_{42}ij}
|I_{42}^{ij(b)}|\leq\,\E\int_{Q_i^{\ast}}s|\mathcal{O}_\lambda((sh)^2|\, |D_iz|^2\,dt+\E\int_{Q_j^{\ast}}s|\mathcal{O}_\lambda((sh)^2|\, |D_jz|^2\,dt.
\end{equation}
From \eqref{eq:estimates:I_{42}ii}-\eqref{eq:estimates:I_{42}ij}, it may be concluded that
\begin{equation}\label{eq:estimatefinalI_42}
    I_{42}\geq\,-\sum_{i=1}^{n}\E\int_{Q_i^{\ast}}s|\mathcal{O}_\lambda((sh)^2|\, |D_iz|^2\,dt-\E\int_{Q}s|\mathcal{O}_\lambda((sh)^2|\, |z|^2\,dt.
\end{equation}
\subsubsection{Estimate of \texorpdfstring{$I_{52}$}{}}
%%%%%%%%%%%%%%%%%%%%%%%%%%%%%%%%%%%%%
Denoting $\beta_{52}^{ij}\triangleq\,\gamma_jrD^2_i(\gamma_i rD_i^2\rho)\,D_jA_j\rho$, $\alpha_{52}^{ij}\triangleq\,\gamma_jrA_i( D_i(\gamma_irD_i^2\rho)\,D_jA_j\rho$ and using \eqref{eq:difference:product} on $C_{5}(z)$, we obtain
\begin{equation*}
    2\E \int_Q C_5(z)\,B_2(z)\,dt=h^2\sum_{i,j=1}^{n}(\beta_{52}^{ij}A_i^2z\,D_jA_jz+\alpha_{52}^{ij}\,D_iA_iz\,D_jA_jz)\,dt\triangleq\,\sum_{i,j=1}^{n}I_{52}^{ij}.
\end{equation*}

Since $\beta_{52}^{ij}=\alpha_{52}^{ij}=s^3\mathcal{O}_{\lambda}(1)$ for each $i,j=1,...,n$, thank to the Theorem \ref{theo:weight:estimates} and Lemma \ref{lem:derivative:wrt:x}.  This allow us to estimate $I_{52}$ as $I_{42}$. Therefore, 
\begin{equation}\label{eq:estimatefinalI52}
\begin{split}
    I_{52}\geq\,&-\sum_{i=1}^{n}\E\int_{Q_i^{\ast}}s|\mathcal{O}_\lambda((sh)^2|\,|D_iz|^2\,dt-\E\int_{Q}s|\mathcal{O}_\lambda((sh)^2|\, |z|^2\,dt.
\end{split}
\end{equation}
%%%%%%%%%%%%%%%%%%%%%%%%%%%%%%%%%%%%%%%%%%%%%%%%%%%%%%%%%%
%%%%%%%%
\subsubsection{Estimate of \texorpdfstring{$I_{33}$}{}}
%%%%%%%%%%%%%%%%%%%%%%%%%%%%%%%%%%%%%%%%%%%%%%%%%%%%%%%%%%%%%%%%%%%%%%%%%%%%%%%%%%%%%%%%%%%%%%%%%%%%%%%%%%%%%%%
Denoting $\beta_{22}\triangleq\,-2sr\partial_t(\varphi)\Delta_{\gamma}(\varphi)$  we have $\displaystyle
        2\E \int_{Q} C_{3}zB_{3}zdt=2\E\int_{Q}\beta_{22}\,|z|^2dt\triangleq\,I_{33}$.
Consider $|\theta_t|\leq CT\theta^2$ and observing that
$\Delta_{\gamma}(\varphi)=\lambda^2\varphi|\nabla_{\gamma}\psi|^2+\lambda \varphi \mathcal{O}(1)$, thus
\begin{equation}\label{eq:estimatefinalI33}
    I_{33}\geq -\E\int_{Q}\left[Cs\lambda^2T\theta^2\varphi|\nabla_{\gamma}\psi|^2+s\lambda T\theta^2\varphi \mathcal{O}(1)\right]|z|^2\,dt.
\end{equation}
%%%%%%%
\subsubsection{Estimate of \texorpdfstring{$I_{43}$}{}}
%%%%%%

By setting $\beta_{43}^{i}\triangleq\,sD_i(\gamma_i D_i(rD_i^2\rho)\,\Delta_{\gamma}(\varphi)$, $\alpha^{i}_{43}\triangleq\,s A_i(\gamma_i D_i(r D_i^2\rho)\,\Delta_{\gamma}(\varphi)$ and using \eqref{eq:difference:product}, we have that 
\begin{equation*}
    2\E \int_Q C_4(z)\,B_3(z)\,dt=-h^2\sum_{i=1}^{n}\E\int_{Q}(\beta_{43}^{i}A_i^2(z)\,z+\alpha_{43}^{i}\,D_i(A_iz)\,z)\,dt\triangleq\,\sum_{i=1}^{n}I_{43}^{i}.
\end{equation*}

By Theorem \ref{theo:weight:estimates} and Lemma \eqref{lem:derivative:wrt:x}, we have $\beta_{43}^{i}=\alpha_{43}^{i}=s^3\mathcal{O}_{\lambda}(1)$ for each $i\in \llbracket 1,\rrbracket$. Then, using Young's inequality, \eqref{eq:estimates:I_{42(a)}} and \eqref{eq:estimates:I_{42(b)}}, we obtain
\begin{equation*}
\begin{split}
    |I_{43}^{i}|\leq & \,s\mathcal{O}_{\lambda}((sh)^2) \E\int_{Q}|A_i^2z|^2+|D_iA_iz|^2+|z|^2\,dt\\
    \leq& \E\int_{Q}s|\mathcal{O}_{\lambda}((sh)^2)|\,|z|^2\,dt+\E\int_{Q_{i}^{\ast}}s|\mathcal{O}_{\lambda}((sh)^2)|\,|D_iz|^2\,dt.
\end{split}    
\end{equation*}
Therefore,
\begin{equation}\label{eq:estimatefinlI43}
    I_{43}=\sum_{i=1}^{n}I_{43}^{i}\geq -\E\int_{Q}s|\mathcal{O}_{\lambda}((sh)^2)|\,|z|^2\,dt-\sum_{i=1}^{n}\E\int_{Q_{i}^{\ast}}s|\mathcal{O}_{\lambda}((sh)^2)|\,|D_iz|^2\,dt.
\end{equation}
%%%%%%%%%
\subsubsection{Estimate of \texorpdfstring{$I_{53}$}{}}
%%%%%%%%%

Setting $\beta_{52}^{i}\triangleq\,sD^2_i(\gamma_i rD_i^2\rho)\,\Delta_{\gamma}(\varphi)$, $\alpha_{52}^{i}\triangleq\,sA_i( D_i(\gamma_irD_i^2\rho)\,\Delta_{\gamma}(\varphi)$ and using \eqref{eq:difference:product}, we have to estimate
\begin{equation*}
    2\E \int_Q C_5(z)\,B_3(z)\,dt=-h^2\sum_{i=1}^{n}(\beta_{53}^{i}A_i^2(z)\,z+\alpha_{53}^{i}\,D_i(A_iz)\,z)\,dt\triangleq\,\sum_{i=1}^{n}I_{53}^{i}.
\end{equation*}
We can now proceed analogously to the estimate of $I_{43}$, getting
\begin{equation*}
    |I_{53}^{i}|\leq\,\E\int_{Q}s|\mathcal{O}_{\lambda}((sh)^2)|\,|z|^2\,dt+\E\int_{Q_{i}^{\ast}}s|\mathcal{O}_{\lambda}((sh)^2)|\,|D_iz|^2\,dt.
\end{equation*}
Therefore,
\begin{equation}\label{eq:estimatefinalI53}
    I_{53}=\sum_{i=1}^{n}I_{53}^{i}\geq -\E\int_{Q}s|\mathcal{O}_{\lambda}((sh)^2)|\,|z|^2\,dt-\sum_{i=1}^{n}\E\int_{Q_{i}^{\ast}}s|\mathcal{O}_{\lambda}((sh)^2)|\,|D_iz|^2\,dt.
\end{equation}

%%%%%%%%%%%%%%%%%%%%%%%%%%%%%%%%%%%%%%%%%%%%%%%%%%%%%%%%%%%%%%%%%%%%%%%%%%%%%%%%%%%%%%%%%%%%%%%%%%%%%%%%%%%%%%%%%%%%%%%%
\section{Technical steps to obtain the intermediate result and other result.}\label{sec:intermediateResults}
\subsection{A Discrete Sobolev-Type Inequality}\label{Sec:Sobolevinequality}

In this section, we establish two results from \cite[Section 9]{Brezis2011} in the discrete setting. The first is the lemma \ref{Lem:sobolevdiscrete}, and the second is Theorem \ref{teo:discreteSobolev}. We begin by defining a collection of discrete sets that will be useful throughout the formulation. Given $x \in \mathcal{M}$, we denote the set of points between $x$ and $x-(x_i-h)e_{i}$ as follows
$$
\mathcal{M}^{i}(x)\triangleq\left\{y\in \mathcal{M} : x - y = k h e_i\quad \text{with } k = 1, \dots, \frac{x_i}{h} - 1 \right\}.
$$
Observe that $\mathcal{M}^i(x) \subset \mathcal{M}$. Moreover, we can rewrite the set $\mathcal{M}$ in terms of the sets $\mathcal{M}^{i}$ since there exists $x^i \in \partial_i \mathcal{M}$ such that
\begin{equation}\label{eq:mesh}
\mathcal{M} = \prod_{i=1}^{n} \mathcal{M}^{i}(x^i).    
\end{equation}
Similarly, we also denote the dual set of $\mathcal{M}^{i}(x)$ and its boundary as $(\mathcal{M}^i(x))^{\ast}_{i}$ and $\partial_i \mathcal{M}^i(x)$, respectively. We notice that the dual sets $\mathcal{M}_{i}^{\ast}$ can be rewritten for $i=1,\ldots,n$, as follows.
\begin{equation}\label{eq:Mdual}
\mathcal{M}_i^{\ast} = \prod_{\substack{j = 1 \\ i \ne j}}^{n} \mathcal{M}^{j}(x^j) \times (\mathcal{M}^i(x^i))_{i}^{\ast}.
\end{equation}
For any $y \in \mathcal{M}$, we define the discrete integral along the $i$-th direction from $y-y_ie_i$ to $y$ as
$$
\int_{\mathcal{M}^{i}(y)} u(y_1,\dots,y_{i-1},s,y_{i+1},\dots,y_n) \triangleq h \sum_{x \in \mathcal{M}^{i}(y)} u(x).
$$
Furthermore, thanks to \eqref{eq:mesh} there exists $x^i \in \partial_i \mathcal{M}$, for $i=1,\ldots, n$, such that
\begin{equation}\label{eq:oneandmultidirection}
\int_{\mathcal{M}^1(x^1)} \cdots \int_{\mathcal{M}^n(x^n)} u = \int_{\mathcal{M}} u.
\end{equation}

The proof of Theorem \ref{teo:discreteSobolev} is based on a discrete version of Lemma 9.2 in \cite{Brezis2011}. Since the proof of this Lemma mimics step by step the strategy of the continuous setting, we omit it.
\begin{lemma}\label{Lem:sobolevdiscrete}
Let $n\geq2$, and let $f_i\in L^{n-1}_{h}\left(\mathcal{N}_i\right)$ for $i=1,...,n.$, where $\mathcal{N}_i=\displaystyle\prod_{\substack{j=1\\ j\ne i}}^{n}\mathcal{M}^j(x^j)$. For $x\in \mathcal{M}$ and $1\leq i \leq n$, define $
\tilde{x}_i=(x_1,x_2,...,x_{i-1},x_{i+1},...,x_n)
$, that is, $x_i$ is omitted from the list. Then, the function  
$
f(x)=f_1(\tilde{x}_1)f_2(\tilde{x}_2)\cdots f_{n}(\tilde{x}_n)$ belongs to $L_{h}^{1}(\mathcal{M}),$
and
\begin{equation}
\|f\|_{L_{h}^{1}(\mathcal{M})}\leq\prod_{i=1}^{n}\|f_i\|_{L_{h}^{n-1}\left(\mathcal{N}_i\right)}.
\end{equation}
\end{lemma}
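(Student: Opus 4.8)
The plan is to argue by induction on the dimension $n$, following verbatim the scheme of \cite[Lemma 9.2]{Brezis2011} and replacing each one–dimensional integral by a sum of the form $h\sum$; the only genuinely new point is to keep track of the powers of $h$ produced by the factorization \eqref{eq:oneandmultidirection}. As a preliminary remark I would record that the discrete (generalized) Hölder inequality holds with the convention $\int_{\mathcal{W}}=h^{m}\sum_{\mathcal{W}}$: if $\tfrac1{p_1}+\cdots+\tfrac1{p_k}=1$, the $h$–factors on the right recombine to the single power carried by the left–hand side, so that $\int\prod_j g_j\le\prod_j(\int |g_j|^{p_j})^{1/p_j}$ remains valid on any of the meshes $\mathcal{M}$, $\mathcal{M}_i^{\ast}$, $\mathcal{N}_i$. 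For the base case $n=2$ one has $f(x)=f_1(x_2)f_2(x_1)$, and \eqref{eq:oneandmultidirection} factors the double sum, giving
\[
\|f\|_{L^1_h(\mathcal{M})}=\Big(h\sum_{x_1}|f_2(x_1)|\Big)\Big(h\sum_{x_2}|f_1(x_2)|\Big)=\|f_1\|_{L^1_h(\mathcal{N}_1)}\,\|f_2\|_{L^1_h(\mathcal{N}_2)},
\]
which is the assertion since $n-1=1$.

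For the inductive step, assuming the inequality in dimension $n-1$, I would first use \eqref{eq:oneandmultidirection} to sum along the last direction $\mathcal{M}^n(x^n)$. The factor $f_n(\tilde x_n)$ does not depend on $x_n$ and comes out, while to the remaining $n-1$ factors I apply the discrete Hölder inequality in the variable $x_n$ with all exponents equal to $n-1$ (legitimate because $\sum_{i=1}^{n-1}\tfrac1{n-1}=1$), obtaining
\[
h\!\!\sum_{x_n\in\mathcal{M}^n(x^n)}\prod_{i=1}^{n-1}|f_i(\tilde x_i)|\;\le\;\prod_{i=1}^{n-1}g_i,\qquad g_i:=\Big(h\sum_{x_n}|f_i(\tilde x_i)|^{\,n-1}\Big)^{1/(n-1)}.
\]
Each $g_i$ depends only on the $n-2$ variables of $\tilde x_i$ other than $x_n$, so it lives on the $(n-2)$–dimensional submesh $\mathcal{N}_i'=\prod_{j\le n-1,\,j\neq i}\mathcal{M}^j(x^j)$.

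It then remains to bound $\int_{\mathcal{M}'}|f_n|\prod_{i=1}^{n-1}g_i$ over $\mathcal{M}'=\prod_{j=1}^{n-1}\mathcal{M}^j(x^j)$. Here I apply the discrete Hölder inequality on $\mathcal{M}'$, separating $f_n$ with exponent $n-1$ and the product $\prod_i g_i$ with the conjugate exponent $(n-1)/(n-2)$. The first factor is exactly $\|f_n\|_{L^{n-1}_h(\mathcal{N}_n)}$ (recall $\tilde x_n$ ranges over $\mathcal{N}_n=\mathcal{M}'$), while the second factor involves $\int_{\mathcal{M}'}\prod_{i=1}^{n-1}g_i^{(n-1)/(n-2)}$, a product of $n-1$ functions living on the submeshes $\mathcal{N}_i'$. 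Invoking the inductive hypothesis on $\mathcal{M}'$ for the functions $g_i^{(n-1)/(n-2)}$, together with the discrete Fubini identity $\int_{\mathcal{N}_i'}g_i^{\,n-1}=\|f_i\|_{L^{n-1}_h(\mathcal{N}_i)}^{\,n-1}$ (again a consequence of \eqref{eq:oneandmultidirection}, since $\mathcal{N}_i=\mathcal{N}_i'\times\mathcal{M}^n(x^n)$), the exponents collapse and yield $\prod_{i=1}^{n-1}\|f_i\|_{L^{n-1}_h(\mathcal{N}_i)}$, whence the claimed bound $\prod_{i=1}^{n}\|f_i\|_{L^{n-1}_h(\mathcal{N}_i)}$.

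I do not expect a conceptual obstacle: the argument is a line–by–line transcription of the continuous proof, and the product structure \eqref{eq:mesh}--\eqref{eq:oneandmultidirection} renders the discrete Fubini steps immediate. The only step requiring attention is the $h$–bookkeeping, i.e.\ verifying that each one–dimensional sum $\int_{\mathcal{M}^i(y)}$ contributes a single factor $h$, so that $\int_{\mathcal{M}}|f|$ carries $h^{n}$ while each norm $\|f_i\|_{L^{n-1}_h(\mathcal{N}_i)}=(h^{\,n-1}\sum|f_i|^{n-1})^{1/(n-1)}$ contributes precisely one factor $h$, the $n$ factors reproducing $h^{n}$. This consistency is exactly what guarantees that the discrete Hölder inequality may be applied with the $h\sum$ convention at each stage.
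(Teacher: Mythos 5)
Your proof is correct and is precisely what the paper intends: the authors omit the proof of Lemma \ref{Lem:sobolevdiscrete} on the grounds that it ``mimics step by step the strategy of the continuous setting'' of \cite[Lemma 9.2]{Brezis2011}, and your induction (base case $n=2$, Hölder in $x_n$ with $n-1$ equal exponents, then Hölder with exponents $n-1$ and $(n-1)/(n-2)$ on $\mathcal{M}'$ plus the inductive hypothesis) is exactly that transcription, with the product structure \eqref{eq:mesh}--\eqref{eq:oneandmultidirection} supplying the discrete Fubini steps and the $h$-powers balancing as you verify.
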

The proof of Theorem \ref{teo:discreteSobolev} follows the same strategy as in the continuous case. However, we do not have a chain rule for the discrete difference operator $D_{i}(|u|^{m})$. To address this, we derive an identity for $D_{i}(|u||u|^{m-1})$ applying the Leibniz rule \eqref{eq:difference:product}. This approach introduces additional terms related to the average operator, making the adaptation nontrivial.
\subsubsection{Proof Theorem \ref{teo:discreteSobolev}.} 
We begin with the case $p=1$, where $u\in C(\overline{\mathcal{M}})$ satisfies $u(\vec{0})=0$. We have
\[
|u(x_1,x_2,...,x_i...,x_n)|=\left|\int_{(\mathcal{M}^i(x_i))_{i}^\ast}D_iu\right|\leq \int_{(\mathcal{M}^i)_i^\ast}|D_iu|,
\]
where $\mathcal{M}^i(x_i)$ is a subset of $\mathcal{M}^i$ such that if $x\in \mathcal{M}^i(x_i)$ then $x\cdot e_i< x_i$. Thus
\[
|u(x)|^n\leq \prod_{i=1}^{n}\int_{(\mathcal{M}^{i})^{\ast}_{i}}|D_{i}u|.
\]
We deduce from Lemma \ref{Lem:sobolevdiscrete}, with $\displaystyle f_{i}(\tilde{x}_{i})=\int_{(\mathcal{M}^{i})^{\ast}_{i}}|D_{i}u|$, 
\begin{align*}
\int_{\mathcal{M}}|u(x)|^{n/(n-1)}\leq& \prod_{i=1}^{n}\|f_i\|_{L_{h}^1(\mathcal{N}_i)}^{1/(n-1)}=\prod_{i=1}^{n}\left\|\int_{(\mathcal{M}^i)_i^\ast}|D_iu|\right\|_{L_{h}^1(\mathcal{N}_i)}^{1/(n-1)}.
\end{align*}
Thanks to \eqref{eq:Mdual}, we can notice that
\begin{align*}
\prod_{i=1}^{n}\left\|\int_{(\mathcal{M}^i)_i^\ast}|D_iu|\right\|_{L^1\left(\mathcal{N}_i\right)}^{1/(n-1)}=\prod_{i=1}^{n}\left(\int_{\mathcal{M}_i^{\ast}}|D_iu|\right)^{1/(n-1)}.
\end{align*}
Therefore,
\begin{equation*}
    \int_{\mathcal{M}}|u(x)|^{n/(n-1)}\leq \prod_{i=1}^{n}\left(\int_{\mathcal{M}_i^{\ast}}|D_iu|\right)^{1/(n-1)}.
\end{equation*}
As a consequence, we have the following.
\begin{align}\label{eq:sobolev}
    \left(\int_{\mathcal{M}}|u|^{n/(n-1)}\right)^{(n-1)/n}\leq \prod_{i=1}^{n}\left(\int_{\mathcal{M}_i^{\ast}}|D_iu|\right)^{1/n}.
\end{align}
The proof for $p=1$ follows after applying the inequality of arithmetic and geometric mean to the right-hand side above.

We now turn to the case $1<p<n$. Let $m\geq 1$; the idea is to apply \eqref{eq:sobolev} to $|u|^{m-1}u$. However, since we do not have a chain rule for the operator $D_{i}$, we will use the following identity.
\begin{align}
    D_i(|u|^{m-1}u)=&D_i(|u|^{m-1})A_i(u)+A_i(|u|^{m-1})D_i(u) \nonumber\\
    =&A_i(u)D_i(|u|)\left(\sum_{k=0}^{m-2}(A_i|u|)^{k}A_i(|u|^{m-2-k})\right)+A_i(|u|^{m-1})D_i(u). \label{eq:productsobolev}
\end{align}
The proof of \eqref{eq:productsobolev} is consequence of the expression
\begin{equation*}
    D_i(|u|^{m-1})=D_{i}(|u|)\sum_{k=0}^{m-2}(A_i|u|)^{k}A_i(|u|^{m-2-k}).
\end{equation*}
which follows by induction over $m\in\mathbb{N}$. Moreover, also by induction, we can prove  that for $m\in \mathds{N}$, holds the next inequality
\begin{align}\label{eq:averagesobolev}
    A_i(|u|^m)
    \leq& C(A_i|u|)^m.
    \end{align}
Hence, combining \eqref{eq:productsobolev} and \eqref{eq:averagesobolev}, and noting that $|D_i|u||\leq |D_iu|$ and $|A_iu|\leq A_i|u|$, we obtain
\begin{equation}\label{ine:Dum}
\begin{aligned}
   |D_i(|u|^{m-1}u)|\leq& |D_iu|\left(\sum_{k=1}^{m-2}(A_i|u|)^{k+1}A_i(|u|^{m-2-k})+A_i(|u|^{m-1})\right)\\
   \leq &|D_iu|(A_i|u|)^{m-1}\left(\sum_{k=0}^{m-2}2^{m-1-k}+2^{m-1}\right)\\
   \leq& C(A_i|u|)^{m-1}|D_iu|.
\end{aligned} 
\end{equation}
Thus, applying \eqref{eq:sobolev} to $|u|^{m-1}u$  we obtain 
\begin{align}\label{eq:sobolev1}
    \left(\int_{\mathcal{M}}|u|^{mn/(n-1)}\right)^{(n-1)/n}\leq \prod_{i=1}^{n}\left(\int_{\mathcal{M}_i^{\ast}}|D_i(|u|^{m-1}u)|\right)^{1/n}.
\end{align}
Therefore, using \eqref{ine:Dum} in \eqref{eq:sobolev1} it follows that
\begin{equation}\label{ine:almost:Sobolev}
\begin{aligned}
    \left(\int_{\mathcal{M}}|u|^{mn/(n-1)}\right)^{(n-1)/n}&\leq C \prod_{i=1}^{n}\left(\int_{\mathcal{M}_i^{\ast}}(A_i|u|)^{m-1}|D_iu|\right)^{1/n}\\
&\leq C \prod_{i=1}^{n}\mathcal{I}_{i}^{\frac{1}{np'}}\left(\int_{\mathcal{M}_i^{\ast}}|D_iu|^p\right)^{1/(np)},
\end{aligned}
\end{equation}
where $\frac{1}{p'}+\frac{1}{p}=1$ and $\displaystyle\mathcal{I}_{i}:=\int_{\mathcal{M}_i^{\ast}}(A_i|u|)^{p'(m-1)}$.  \\
The key step is to prove a uniform bound on $\mathcal{I}_{i}$ (independent of $i$) in terms of the integral of $u$ over $\mathcal{M}$. Using that $(a+b)^{c}\leq 2^{c-1}(a^c+b^c)$ for $a,b\geq 0$ and $c\geq 1$, we have
\begin{align*}
    \mathcal{I}_{i}\leq \int_{\mathcal{M}_i^{\ast}}2^{p'(m-1)-1}A_i(|u|^{p'(m-1)}),
\end{align*}
Now, performing an integration by parts with respect to the average operator and using that $u=0$ on $\partial\mathcal{M}$, we obtain
\begin{align*}
    \mathcal{I}_{i}\leq& \int_{\mathcal{M}}|u|^{p'(m-1)}A_i(2^{p'(m-1)-1})+\frac{h}{2}\int_{\partial_i\mathcal{M}}|u|^{p'(m-1)}t_r^i(2^{p'(m-1)-1})\\
    \leq&\int_{\mathcal{M}}2^{p'(m-1)-1} |u|^{p'(m-1)}.
\end{align*}
Then, using the above inequality in \eqref{ine:almost:Sobolev} allows us to conclude
\begin{align}\label{eq:sobolev2}
 \left(\int_{\mathcal{M}}|u|^{mn/(n-1)}\right)^{(n-1)/n}&\leq  C\left(\int_{\mathcal{M}}2^{p'(m-1)-1}|u|^{p'(m-1)}\right)^{\frac{1}{p'}}\prod_{i=1}^{n}\left(\int_{\mathcal{M}_i^{\ast}}|D_iu|^p\right)^{1/(np)}.
\end{align}
 Thus, choosing $m\in \mathds{N}$ such that $mn/(n-1) = p'(m- 1)$, that is $m:=(n-1)p^{\ast}/n\geq 1$ (since $1 < p < n$); we have
\begin{align*}
    \|u\|^m_{L_{h}^{p^{\ast}}(\mathcal{M})}\leq C\left(\int_{\mathcal{M}}|u|^{p^\ast}\right)^{\frac{m-1}{p^{\ast}}}\prod_{i=1}^{n}\left(\int_{\mathcal{M}_i^{\ast}}|D_iu|^p\right)^{1/(np)},
\end{align*}
implying
\begin{align*}
    \|u\|_{L_{h}^{p^{\ast}}(\mathcal{M})}\leq C\prod_{i=1}^{n}\left(\int_{\mathcal{M}_i^{\ast}}|D_iu|^p\right)^{1/(np)}.
\end{align*}
Finally, let us consider the case $p=n$. From \eqref{eq:sobolev2}, for all $m>1$ we have
$$
 \left(\int_{\mathcal{M}}|u|^{mn/(n-1)}\right)^{(n-1)/n} \leq C\left(\int_{\mathcal{M}}|u|^{\frac{n(m-1)}{n-1}}\right)^{\frac{(n-1)}{n}}\prod_{i=1}^{n}\left(\int_{\mathcal{M}_i^{\ast}}|D_iu|^n\right)^{1/(n^2)} ,
$$
and thanks to Young's inequality, we have for all $m>1$
$$
 \left(\int_{\mathcal{M}}|u|^{mn/(n-1)}\right)^{(n-1)/(nm)} \leq C\left(\left(\int_{\mathcal{M}}|u|^{\frac{n(m-1)}{n-1}}\right)^{\frac{n-1}{(m-1)n}}+\sum_{i=1}^n\left(\int_{\mathcal{M}_i^{\ast}}|D_iu|^n\right)^{1/n}\right).
$$
Choosing $m=n$ in the above inequality yields
$$
\|u\|_{L_h^{n^2 /(n-1)}(\mathcal{M})} \leq C\|u\|_{W_{h}^{1, n}(\mathcal{M})}.
$$
By the interpolation inequality, this implies
$$
\|u\|_{L_{h}^{p^{\ast}}(\mathcal{M})} \leq C\|u\|_{W_{h}^{1, n}(\mathcal{M})}
$$
for all $p^{\ast}\in [n,n^{2}/(n-1)]$. Iterating this argument with $m=n+1$, $m=n+2$, and so on, we conclude the desired result.
\subsection{Proof of Lemma \ref{lem:estimate:termDw}}
 We choose a function $\xi\in C_0^{\infty}(G_0;[0,1])$ such that $\xi\equiv 1$ in $G_1$ . By the Itô formula, then we see that
\begin{equation*}
    \begin{split}
        \E&\left.\int_{G_0\cap\mathcal{M}}s\varphi \xi^2 e^{2s\theta\varphi}|w|^2\right|_0^T\\
        &=\E\int_0^T\int_{G_0\cap\mathcal{M}}\xi^2\varphi\left[\partial_t(se^{2s\theta\varphi})|w|^2\,dt+2s e^{2s\theta\varphi}\,wdw+s e^{2\theta\varphi}|dw|^2\right].
    \end{split}
\end{equation*}
Recalling that $dw+\sum_{i=1}^{n}D_i(\gamma_i D_iw)dt=fdt+gdB(t)$ and noting that
\begin{align*}
&|dw|^2=dw\; dw=g^2\,(dB(t))^2+2\left[g\sum_{i=1}^{n}D_i(\gamma_iD_iw)+fg\right]dt\,dB(t)\\
&+\left[\left(\sum_{i=1}^{n}D_i(\gamma_iD_iw)\right)\left(\sum_{j=1,...,n}D_{j}(\gamma_jD_jw)\right)+f\sum_{i=1,...,n }D_i(\gamma_iD_i w)+f^2\right]\,(dt)^2\\
&=g^2\,dt,
\end{align*}
where we used the fact that $(dt)^2=dt\,dB(t)=0$ and $(dB(t))^2=dt$, we can rewrite the integral above as
\begin{equation}\label{eq1:lemma4.7}
    \begin{split}
        \E&\left.\int_{G_0\cap\mathcal{M}}s\varphi \xi^2 e^{2s\theta\varphi}|w|^2\right|_0^T=-2\sum_{i=1}^{n}\E\int_0^T\int_{G_0\cap\mathcal{M}}s\xi^2 \varphi e^{2\theta\varphi}wD_i(\gamma_iD_iw)\,dt\\
        &+\E\int_0^T\int_{G_0\cap\mathcal{M}}\left[\xi^2\varphi\partial_t(se^{2s\theta\varphi})|w|^2+2s\xi^2\varphi e^{2s\theta\varphi}\,wf+s\xi^2\varphi e^{2\theta\varphi}g^2\right]dt.
    \end{split}
\end{equation}
Using \eqref{eq:difference:product} and \eqref{eq:int:dif} on the last integral above, we obtain for each $i=1,...,n$ that
\begin{equation}\label{eq2:lemma4.7}
    \begin{split}
        &\E\int_0^T\int_{G_0\cap\mathcal{M}}s\xi^2 \varphi e^{2\theta\varphi}wD_i(\gamma_iD_iw)\,dt=-\E\int_0^T\int_{G_0\cap\mathcal{M}_{i}^{\ast}}s\gamma_iD_i(\xi^2 \varphi e^{2\theta\varphi}w)D_iw\,dt\\
        &+\E\int_{0}^T \int_{G_0\cap\partial_i\mathcal{M}}\xi^2 s\varphi e^{2\theta\varphi}w\,t_r^i(\gamma_iD_iw)\,dt=-\E\int_0^T\int_{G_0\cap\mathcal{M}_{i}^{\ast}}s\gamma_iA_i(\xi^2 \varphi e^{2\theta\varphi})\,|D_iw|^2\,dt\\
        &-\E\int_0^T\int_{G_0\cap\mathcal{M}_{i}^{\ast}}s\gamma_iD_i(\xi^2 \varphi e^{2\theta\varphi})A_iw\,D_iw\,dt,   
    \end{split}
\end{equation}
where used that $z=0$ on $\partial_i Q$. Therefore, combining \eqref{eq1:lemma4.7} and \eqref{eq2:lemma4.7} we conclude that
\begin{equation}\label{eq3:lemma4.7}
    \begin{split}
        2&\sum_{i=1}^{n}\E\int_0^T \int_{G_0\cap\mathcal{M}_i^{\ast}} s\gamma_i A_i(\xi^2\varphi e^{2\theta \varphi})\,|D_iw|^2\,dt+\E\int_0^T\int_{G_0\cap\mathcal{M}}s\xi^2\varphi e^{2\theta\varphi}g^2dt\\
        =& -\E\int_0^T\int_{G_0\cap\mathcal{M}}\left[\xi^2\varphi\partial_t(se^{2s\theta\varphi})|w|^2+2s\xi^2\varphi e^{2s\theta\varphi}\,wf\right]dt+\left.\E\int_{G_0\cap\mathcal{M}}s\varphi \xi^2 e^{2s\theta\varphi}|w|^2\right|_0^T\\
        &-2\sum_{i=1}^{n}\E\int_0^T\int_{G_0\cap\mathcal{M}_{i}^{\ast}}s\gamma_iD_i(\xi^2 \varphi e^{2\theta\varphi})A_iw\,D_iw\,dt.
    \end{split}
\end{equation}
By Proposition \ref{prop:weight}, for $\tau h(\delta T^2)^{-1}\leq 1$, it follows that $A_i(\xi^2\varphi e^{2\theta \varphi})=\xi^2\varphi e^{2\theta \varphi}+\mathcal{O}_{\lambda}((sh)^2)e^{2s\theta\varphi}$. Then the first term in the left-hand of \eqref{eq3:lemma4.7} for $i=1,...,n$ can be estimated by
\begin{equation}
    \begin{split}
         2\E\int_0^T \int_{G_0\cap\mathcal{M}_i^{\ast}} s\gamma_i A_i(\xi^2\varphi e^{2\theta \varphi})\,|D_iw|^2\,dt=& 2\E\int_0^T \int_{G_0\cap\mathcal{M}_i^{\ast}}s\xi^2\varphi e^{2\theta \varphi}\,|D_iw|^2\,dt\\
         &+\E\int_0^T \int_{G_0\cap\mathcal{M}_i^{\ast}}s\mathcal{O}_{\lambda}((sh)^2)e^{2s\theta\varphi}\,|D_iw|^2\,dt.
    \end{split}
\end{equation}
For the first term in the right-hand side of \eqref{eq3:lemma4.7}, we have
\begin{equation}
\E\int_0^T\int_{G_0\cap\mathcal{M}}\xi^2\varphi\partial_t(se^{2s\theta\varphi})|w|^2\,dt=\E\int_0^T\int_{G_0\cap\mathcal{M}}\theta\mathcal{O}_{\lambda}(s^2)|w|^2\,dt.
\end{equation}
Moreover, we see that
\begin{equation}\label{eq4:lemma4.7}
\begin{split}
    \left|\E\int_0^T\int_{G_0\cap\mathcal{M}}2s\xi^2\varphi e^{2s\theta\varphi}\,wf\,dt\right|\leq C \E\int_0^T&\int_{G_0\cap\mathcal{M}}s^2\lambda ^2\varphi^2e^{2s\theta \varphi}|w|^2\,dt\\
    &+C\E\int_0^T\int_{\mathcal{M}}\lambda^{-2}e^{2s\theta \varphi}\,|f|^2\,dt.
\end{split}
\end{equation}
Next, we estimate the last term on the right hand side of \eqref{eq3:lemma4.7}. By Proposition \ref{prop:weight}, we have $D_i(\xi^2\varphi e^{2s\theta\varphi})=\partial_i(\xi^2\varphi e^{2s\theta\varphi})+h\mathcal{O}_{\lambda}(sh)e^{2s\theta\varphi}=s\lambda\mathcal{O}(1)\xi \varphi^2e^{2s\theta\varphi}+h\mathcal{O}_{\lambda}(sh)e^{2s\theta\varphi}$, then for each $i=1,...,n$
\begin{equation}\label{eq5:lemma4.7}
    \begin{split}
        -2&\E\int_0^T\int_{G_0\cap\mathcal{M}_{i}^{\ast}}s\gamma_iD_i(\xi^2 \varphi e^{2\theta\varphi})A_iw\,D_iw\,dt\\
        =\,&\E\int_0^T\int_{G_0\cap\mathcal{M}_{i}^{\ast}}\left[s^2\lambda\mathcal{O}(1)\xi \varphi^2e^{2s\theta\varphi}+\mathcal{O}_{\lambda}((sh)^2)e^{2s\theta\varphi}\right]A_iw\,D_iw\,dt\\
        \leq\, & \E\int_0^T\int_{G_0\cap\mathcal{M}_{i}^{\ast}}\xi^2s\varphi e^{2s\theta\varphi}|D_iz|^2\,dt+C\E\int_0^T\int_{G_0\cap\mathcal{M}_{i}^{\ast}}s^3\lambda^2\varphi^3 e^{2s\theta\varphi}|A_iw|^2\,dt\\
        &+\E\int_0^T\int_{G_0\cap\mathcal{M}_{i}^{\ast}}\mathcal{O}_{\lambda}((sh)^2)e^{2s\theta\varphi}|D_iz|^2\,dt+\E\int_0^T\int_{G_0\cap\mathcal{M}_{i}^{\ast}}\mathcal{O}_{\lambda}((sh)^2)e^{2s\theta\varphi}|A_iw|^2\,dt
    \end{split}
\end{equation}
where Young's inequality was used. Similarly to \eqref{eq:estimates:I_{42(a)}}, and noting that $A_i(\varphi ^3e^{2s\theta\varphi})=\varphi ^3e^{2s\theta\varphi}+\mathcal{O}_{\lambda}((sh)^2)e^{2s\theta\varphi}$, $A_i(e^{2s\theta\varphi})=\mathcal{O}_{\lambda}(1)e^{2s\theta\varphi}$, we have
\begin{equation}\label{eq6:lemma4.7}
    \begin{split}
        \E\int_0^T&\int_{G_0\cap\mathcal{M}_{i}^{\ast}}s^3\lambda^2\varphi^3 e^{2s\theta\varphi}|A_iw|^2\,dt+\E\int_0^T\int_{G_0\cap\mathcal{M}_{i}^{\ast}}\mathcal{O}_{\lambda}((sh)^2)e^{2s\theta\varphi}|A_iw|^2\,dt\\
        \leq&  \E\int_0^T\int_{G_0\cap\mathcal{M}}s^3\lambda^2\varphi^3 e^{2s\theta\varphi}|w|^2\,dt+\E\int_0^T\int_{G_0\cap\mathcal{M}}\mathcal{O}_{\lambda}((sh)^2)e^{2s\theta\varphi}|w|^2\,dt.
    \end{split}
\end{equation}
Hence, combining  \eqref{eq5:lemma4.7} and \eqref{eq6:lemma4.7}, the last term in the left-hand of \eqref{eq3:lemma4.7} can be estimated by
\begin{equation}\label{eq7:lemma4.7}
    \begin{split}
        -2&\E\int_0^T\int_{G_0\cap\mathcal{M}_{i}^{\ast}}s\gamma_iD_i(\xi^2 \varphi e^{2\theta\varphi})A_iw\,D_iw\,dt\\
        \leq\, & \E\int_0^T\int_{G_0\cap\mathcal{M}_{i}^{\ast}}\xi^2s\varphi e^{2s\theta\varphi}|D_iz|^2\,dt+C\E\int_0^T\int_{G_0\cap\mathcal{M}}s^3\lambda^2\varphi^3 e^{2s\theta\varphi}|w|^2\,dt\\
        &+\E\int_0^T\int_{G_0\cap\mathcal{M}_{i}^{\ast}}\mathcal{O}_{\lambda}((sh)^2)e^{2s\theta\varphi}|D_iz|^2\,dt+\E\int_0^T\int_{G_0\cap\mathcal{M}}\mathcal{O}_{\lambda}((sh)^2)e^{2s\theta\varphi}|w|^2\,dt.
    \end{split}
\end{equation}
Therefore, combining \eqref{eq3:lemma4.7}-\eqref{eq4:lemma4.7},\eqref{eq7:lemma4.7}, we conclude the result of the Lemma \ref{lem:estimate:termDw}.
%%%%%%%%%%%%%%%%%%%%%%%%%%

%%%%%%%%%%%%%%%%%%%%%%%%%%%%%%%%%%%%%%%%%%%%%%%%%%%%%%%%%%%%%%%%%%%%%%%%%%%%%%%%%%%%%%%%%%%%%%%%%%%%%%%%%%%%%%%%%%%%%%%%%%%%%%%%%%%%%%%%%%%%%%%%%%%%%%%%%%%%%%%%%%%%%%%%%%%%%%%%%%%%%%%%%%%%%

\end{appendices}

\bibliographystyle{abbrv}
\bibliography{references}

\end{document}